\documentclass[a4paper,11pt]{amsart}
\usepackage{amsmath}
\usepackage{amsthm}
\usepackage{amsfonts}
\usepackage{amssymb}
\usepackage{latexsym}
\usepackage{mathrsfs}

\usepackage{enumitem}

\usepackage[section]{placeins}
\usepackage{xcolor}

\usepackage[abs]{overpic}
\usepackage[titletoc]{appendix}
\usepackage{graphicx}
\usepackage{latexsym}
\usepackage[ansinew]{inputenc}
\usepackage{epsfig}
\usepackage{psfrag}

\usepackage{tikz}
\usepackage[normalem]{ulem}



\numberwithin{equation}{section}

\newtheorem{theorem}{Theorem}[section]{\bf}{\it}
\newtheorem{lemma}[theorem]{Lemma}{\bf}{\it}
\newtheorem{proposition}[theorem]{Proposition}{\bf}{\it}
\newtheorem{corollary}[theorem]{Corollary}{\bf}{\it}
{\bf}{\it} 
{\bf}{\it}
\newtheorem*{theorem*}{Theorem}

\newtheorem{remark}[theorem]{Remark}
{\bf}{\it}
{\bf}{\it}

\newtheorem*{namedtheorem}{\theoremname}
\newcommand{\theoremname}{testing}
\newenvironment{named}[1]{\renewcommand{\theoremname}{#1}\begin{namedtheorem}}{\end{namedtheorem}}

\newtheorem*{definition*}{Definition}

{\bf}{\it}
{\bf}{\it}
{\bf}{\it}
\newtheorem{example}[theorem]{Example}
\newtheorem*{example*}{Example}

\theoremstyle{remark}

\theoremstyle{definition}

\theoremstyle{remark}


\newcommand{\diam}{\operatorname{diam}}

\newcommand{\bSn}{{\mathbb S^n}}

\newcommand{\R}{\mathbb R}
\newcommand{\Z}{\mathbb Z}
\newcommand{\C}{\mathbb C}
\newcommand{\N}{\mathbb N}

\newcommand{\loc}{{\operatorname{loc}}}

\newcommand{\dist}{{\operatorname{dist}\,}}
\newcommand{\id}{{\operatorname{id}}}



\newdimen\vintkern\vintkern11pt
\def\vint{-\kern-\vintkern\int}


\newcommand{\norm}[1]{\lVert #1 \rVert}

\newcommand{\bS}{\mathbb{S}}

\newcommand{\vol}{\mathrm{vol}}

\newcommand{\ccF}{\mathscr{F}}

\newcommand{\SO}{\mathrm{SO}}
\newcommand{\CO}{\mathrm{CO}}
\newcommand{\Gr}{\mathrm{Gr}}

\newcommand{\HS}{\mathrm{HS}}

\newcommand{\sym}{\mathrm{sym}}


\renewcommand{\le}{\leqslant}
\renewcommand{\ge}{\geqslant}


\title[]{Quasiregular curves of small distortion in product manifolds}

\author{Susanna Heikkil\"a}
\address{Department of Mathematics and Statistics, P.O. Box 68 (Pietari Kalmin katu 5), FI-00014 University of Helsinki, Finland}
\email{susanna.a.heikkila@helsinki.fi}

\author{Pekka Pankka}
\address{Department of Mathematics and Statistics, P.O. Box 68 (Pietari Kalmin katu 5), FI-00014 University of Helsinki, Finland}
\email{pekka.pankka@helsinki.fi}

\author{Eden Prywes}
\address{Department of Mathematics, Princeton University, Princeton, New Jersey 08544}
\email{eprywes@princeton.edu}

\date{\today}

\thanks{S.H. and P.P. were supported in part by the Academy of Finland project \#332671. 
E.P. was partially supported by the NSF grant RTG-DMS-1502424.}
\date{\today}
\subjclass[2010]{Primary 30C65; Secondary 32A30, 53C15, 53C57, 58J60}
\keywords{Quasiregular curve, weakly conformal mapping, calibration, Liouville's theorem, Reshetnyak's theorem}


\begin{document}

\begin{abstract}
We consider, for $n\ge 3$, $K$-quasiregular $\vol_N^\times$-curves $M\to N$ of small distortion $K\ge 1$ from oriented Riemannian $n$-manifolds into Riemannian product manifolds $N=N_1\times \cdots \times N_k$, where each $N_i$ is an oriented Riemannian $n$-manifold and the calibration $\vol_N^\times\in \Omega^n(N)$ is the sum of the Riemannian volume forms $\vol_{N_i}$ of the factors $N_i$ of $N$. 

We show that, in this setting, $K$-quasiregular curves of small distortion are carried by quasiregular maps. More precisely, there exists $K_0=K_0(n,k)>1$ having the property that, for $1\le K\le K_0$ and a $K$-quasiregular $\vol_N^\times$-curve $F=(f_1,\ldots, f_k) \colon M \to N_1\times \cdots \times N_k$ there exists an index $i_0\in \{1,\ldots, k\}$ for which the coordinate map $f_{i_0}\colon M\to N_{i_0}$ is a quasiregular map. As a corollary, we obtain first examples of decomposable calibrations for which corresponding quasiregular curves of small distortion are discrete and admit a version of Liouville's theorem.
\end{abstract}

\maketitle



\section{Introduction}

In this article, we consider the quasiconformal geometry of mappings $M\to N$ between Riemannian manifolds, where $\dim M \le \dim N$. Our main focus is in the stability of Liouville's theorem and in Reshetnyak's theorem for quasiregular curves of small distortion. To set the stage, we discuss first Liouville's theorem for conformal curves associated to calibrations and then discuss results related to curves of small distortion. Here we call a mapping, associated to a calibration of manifold, a \emph{curve}. We begin by introducing this terminology in more detail.

Let $M$ and $N$ be Riemannian manifolds of dimensions $n=\dim M\le \dim N$. An $n$-form $\omega\in \Omega^n(N)$ is a \emph{calibration on $N$} if $\omega$ is a closed non-vanishing $n$-form having point-wise comass norm $\norm{\omega}$ equal to one. 
Recall that the comass norm of an $n$-form $\omega \in \Omega^n(N)$ at $p\in N$ is
\[
\norm{\omega_p} = \max\{ \omega_p(v_1,\ldots, v_n) \colon |v_1|\le 1, \ldots, |v_n|\le 1\}.
\]
For example, the Riemannian volume form of a Riemannian manifold and the symplectic form of a K\"ahler manifold are calibrations. We refer to the seminal article of Harvey and Lawson \cite{Harvey-Lawson-Acta-1982} for calibrations associated to exceptional Riemannian geometries and for the role of calibrations in the theory of minimal surfaces.

We say that a continuous mapping $F\colon M\to N$ in $W^{1,n}_\loc(M,N)$, where $n=\dim M$, is an \emph{$\omega$-calibrated curve for a calibration $\omega\in \Omega^n(N)$ on $N$} if 
\begin{equation}
\label{eq:associated}
\norm{DF}^n = \star F^*\omega
\end{equation}
almost everywhere in $M$, where $\norm{DF}$ is the operator norm of the differential and $\star$ is the Hodge star operator. For any calibration, a mapping satisfying \eqref{eq:associated} is weakly conformal; see Lemma \ref{lemma-linearmaprigidity}. Here, and in what follows, a continuous mapping $F\colon M\to N$ between Riemannian manifolds is \emph{weakly conformal} if $F$ belongs to the Sobolev space $W^{1,n}_\loc(M,N)$ for $n=\dim M$ and the differential $DF$ of $F$ is a conformal linear map almost everywhere in $M$. 

Conformal mappings and holomorphic curves are well-understood examples of calibrated curves.

\begin{example}
A continuous map $M\to N$ between Riemannian manifolds of the same dimension is a calibrated curve if and only if it is a conformal map, that is, a $C^\infty$-smooth local homeomorphism having conformal non-vanishing differential. Here the smoothness follows from Ferrand's theorem \cite{Lelong-Ferrand-MPAM-1976} on smoothness of conformal mappings between smooth manifolds. 

In particular, by the classical Liouville's theorem, a continuous map $\Omega \to \bS^n$, where $\Omega$ is a domain in $\bS^n$, is a calibrated curve if and only if it is a restriction of a M\"obius transformations by the classical Liouville's theorem. We refer to Iwaniec and Martin \cite{Iwaniec-Martin-Acta-1993} and \cite[Section 5]{Iwaniec-Martin-book}, Shachar \cite{Shachar-arXiv-2020}, and Liu \cite{Liu-Advances-2013} for the discussion of Liouville's theorem under low regularity assumptions.
\end{example}
  
\begin{example}
By elementary linear algebra (see Remark \ref{rmk:example-holomorphic}), curves $\Omega \to \C^k$ calibrated by the standard symplectic form $\omega_\sym = dx_1\wedge dy_1 + \cdots + dx_k \wedge dy_k \in \Omega^2(\C^k)$, where $\Omega \subset \C$ is a domain, have weakly conformal coordinate functions. 
Thus, by Weyl's lemma, a map $\Omega \to \C^k$ is an $\omega_\sym$-calibrated curve if and only if it is a holomorphic curve. 

More generally, a continuous map $\Sigma \to N$ from a Riemann surface to a K\"ahler manifold $(N,J,\omega)$ is an $\omega$-calibrated curve if and only if it is a $J$-holomorphic curve. The smoothness of the curve follows from higher integrability and elliptic regularity; see e.g.~McDuff and Salamon \cite[Theorem B.4.1]{McDuff-Salamon-book} and the discussion below. We refer to Gromov \cite{Gromov-Inventiones-1985} for more details on $J$-holomorphic curves.
\end{example}

\begin{example}
An almost complex structure $J$ on a K\"ahler manifold $N$ is a special case of a vector cross product $Q\colon \wedge^{k-1} TN \to TN$. To each vector cross product $Q$, we may associate a calibration $\omega_Q \in \Omega^k(N)$ by 
\[
\omega_Q(v_1,\ldots, v_k) = \langle Q(v_1,\ldots, v_{k-1}),v_k\rangle,
\]
where $v_1,\ldots, v_k\in T_pN$ for $p\in N$.

A continuous map $F\colon M \to N$ in $W^{1,n}_\loc(M,N)$, where $n=\dim M$, is a \emph{Smith map with respect to an $(n-1)$-vector cross product $Q$ on $N$} if 
\[
Q\circ \wedge^{n-1} DF = \norm{DF}^{n-2} DF \circ \star
\]
almost everywhere on $M$. By Cheng, Karigiannis, and Madnick \cite[Propositions 2.32 and 3.2]{Cheng-Karigiannis-Madnick-arXiv-2019}, a continuous Sobolev map $F\colon M\to N$ in $W^{1,n}_\loc(M,N)$ is a Smith map with respect to a vector cross product $Q$ if and only if $F$ is a $\omega_Q$-calibrated curve with respect to the calibration $\omega_Q$ associated to $Q$.

We refer to \cite{Cheng-Karigiannis-Madnick-arXiv-2019} for further discussion on the Brown--Gray classification \cite{Brown-Gray-Commentarii-1967} of vector cross products and properties of Smith maps.
\end{example}

\begin{example}
In the previous examples, curves stem from the point of view of classical mapping theory. To the other direction, we may interpret $\omega$-calibrated curves as holonomic sections of partial differential relations associated to the calibration $\omega$; we refer to Gromov's book \cite{Gromov-PDR-book} for the terminology. Indeed, given a calibration $\omega\in \Omega^n(N)$, let $G(\omega,p) \subset \widetilde{\Gr}(n,T_pN)$ be the maximal $n$-planes of the calibration $\omega_p$ of $T_pN$ in the oriented Grassmannian $\widetilde{\Gr}(n,T_pN)$. Then $F\colon M\to N$ is an $\omega$-calibrated curve if and only if $F$ is a continuous Sobolev map in $W^{1,n}_\loc(M,N)$ satisfying 
\begin{equation}
\label{eq:PDR}
(DF)_x \in \{ L \in \CO(T_x M, T_{F(x)} N) \colon  L(T_xM) \in G(\omega,F(x))\}
\end{equation}
for almost every $x\in M$, where $\CO(T_x M, T_{F(x)} N)$ is the space of conformal linear maps from $T_x M$ to $T_{F(x)}N$ and $T_xM$ is oriented by $\vol_M$. 
\end{example}

For the volume form $\vol_N$ or the symplectic form $\omega_\sym$, all solutions to the partial differential relation \eqref{eq:PDR} are smooth. This follows from the the standard elliptic regularity theory for the Laplacian. By the results of Cheng, Karigiannis, and Madnick in  \cite{Cheng-Karigiannis-Madnick-arXiv-2019}, Smith maps associated to vector cross products are $C^{1,\alpha}$-regular and $C^\infty$-smooth away from the critical set. This follows from the regularity theory of $n$-harmonic mappings. In fact, all calibrated curves are energy minimizers for the $n$-energy; see Section \ref{sec:n-harmonic}. 

To our knowledge, it is an open question whether all solutions $M\to N$ of the partial differential relation \eqref{eq:PDR} are $C^\infty$-smooth. 
Also, to our knowledge, it is an open question whether the critical set of a non-constant $\omega$-calibrated curve $M\to N$ is empty. Weakly conformal maps in general are too flexible to have such additional regularity. Consider, for example, a folding map $\R^n \to \R^n$, $(x_1,\ldots, x_n) \mapsto (|x_1|,\ldots, |x_n|)$, which is a Lipschitz regular weakly conformal map.

\subsection{Liouville theorems for curves into product manifolds}

Our first main theorem is a Liouville theorem for calibrated curves $M \to N$ into product manifolds $N=N_1\times \cdots \times N_k$, where the calibration is a canonical multisymplectic form on $N$. More precisely, let $N=N_1\times \cdots \times N_k$ be a Riemannian product manifold of dimension $nk$, where each factor $N_i$ is $n$-dimensional. The manifold $N$ carries a canonical $n$-form 
\[
\vol_N^\times = \sum_{i=1}^k \pi_i^* \vol_{N_i} \in \Omega^n(N)
\]
associated to the product structure $N_1\times \cdots \times N_k$ of $N$, where each $\vol_{N_i}$ is the Riemannian volume form of the manifold $N_i$ and each $\pi_i \colon N \to N_i$ is a projection $(p_1,\ldots, p_k) \mapsto p_i$. Note that $\vol_{\C^k}^\times$ is the classical symplectic form $\omega_\sym$ on $\C^k$. For detailed discussion on multisymplectic forms see e.g.~Cantrijn, Ibort, and de Le\'on \cite{Cantrijn-Ibort-Leon-JAMSSA-1999}.

In contrast to holomorphic curves, $\vol_N^\times$-calibrated curves in dimensions $n\ge 3$ are extremely rigid. Heuristically, we may say that these curves are carried by conformal maps in the sense of the following theorem.

\begin{theorem}
\label{thm:manifold-Liouville} 
Let $M$ be an oriented and connected Riemannian $n$-manifold for $n\ge 3$ and let $N=N_1\times \cdots \times N_k$ be a Riemannian product of oriented Riemannian $n$-manifolds $N_i$ for $i\in \{1,\ldots, k\}$. Then for each non-constant $\vol_N^\times$-calibrated curve $F=(f_1,\ldots, f_k)\colon M\to N$ there exists an index $i_0\in \{1,\ldots, k\}$ for which
\begin{enumerate}
\item the coordinate map $f_{i_0}\colon M\to N_{i_0}$ is a conformal map and
\item for $i\ne i_0$ the coordinate map $f_i \colon M\to N_i$ is constant.
\end{enumerate}
\end{theorem}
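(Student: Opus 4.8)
The plan is to reduce the statement to a pointwise-algebraic rigidity fact about conformal linear maps into a product, and then upgrade the pointwise conclusion to a global one using connectedness and unique continuation for $n$-harmonic maps. First I would record the infinitesimal picture: by Lemma \ref{lemma-linearmaprigidity} a $\vol_N^\times$-calibrated curve $F=(f_1,\dots,f_k)$ is weakly conformal, so for a.e.\ $x\in M$ the differential $(DF)_x\colon T_xM\to T_{F(x)}N=\bigoplus_i T_{f_i(x)}N_i$ is a conformal linear map, i.e.\ $(DF)_x = \lambda(x)\,A(x)$ with $A(x)$ a linear isometry onto its image and $\lambda(x)=\norm{DF_x}\ge 0$. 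Writing $A=(A_1,\dots,A_k)$ with $A_i\colon T_xM\to T_{f_i(x)}N_i$, conformality means $\sum_i A_i^TA_i = \id$, while the calibration condition \eqref{eq:associated} reads $\norm{DF_x}^n = \star F^*\vol_N^\times = \lambda^n \sum_i \det(A_i)$ (each $A_i$ between oriented $n$-spaces, using $\vol_M$ to orient $T_xM$). Hence at every point where $\lambda>0$ we get $\sum_{i=1}^k \det(A_i) = 1$ with $\sum_i A_i^TA_i=\id$ and each $A_i^TA_i$ positive semidefinite of trace $t_i\ge 0$, $\sum t_i = n$.

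The key algebraic claim is then: under these constraints, exactly one $A_i$ is an orientation-preserving isometry of $T_xM$ onto $T_{f_i(x)}N_i$ and all the others vanish. To see this, note $\det(A_i)^2 = \det(A_i^TA_i) \le (t_i/n)^n$ by AM--GM applied to the eigenvalues of $A_i^TA_i$, so $\det(A_i) \le (t_i/n)^{n/2}$; summing, $1 = \sum_i \det(A_i) \le \sum_i (t_i/n)^{n/2}$. Since $n\ge 3$ the function $s\mapsto (s/n)^{n/2}$ is strictly convex on $[0,n]$ and vanishes at $0$, so on the simplex $\{t_i\ge 0,\ \sum t_i=n\}$ the sum $\sum_i (t_i/n)^{n/2}$ is at most $1$, with equality only at a vertex, i.e.\ some $t_{i_0(x)}=n$ and $t_i=0$ for $i\ne i_0(x)$. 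Then $A_i=0$ for $i\ne i_0(x)$, and $A_{i_0(x)}^TA_{i_0(x)}=\id$ with $\det(A_{i_0(x)})=1$, so $A_{i_0(x)}$ is an orientation-preserving isometry; thus $(Df_{i_0(x)})_x$ is an orientation-preserving conformal map and $(Df_i)_x=0$ for $i\ne i_0(x)$, for a.e.\ $x$ in the set $\{\lambda>0\}$. (This is exactly the mechanism already flagged in the introduction: for $n\ge3$ the ``diagonal'' is extremal only at the corners; for $n=2$ strict convexity fails and one recovers the flexibility of holomorphic curves.)

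Next I would globalize the index. Let $M_i = \{x\in M : (Df_i)_x \ne 0 \text{ on a positive-measure neighborhood}\}$, or more robustly argue as follows: each $f_i$ is a weakly conformal $W^{1,n}_\loc$ map into an $n$-manifold, hence $n$-harmonic (Section \ref{sec:n-harmonic}), in fact $C^{1,\alpha}$; and a nonconstant $n$-harmonic map on a connected manifold cannot vanish to infinite order, i.e.\ its critical set has empty interior (unique continuation / the structure theory of weakly conformal maps à la Reshetnyak). The pointwise analysis shows that for a.e.\ $x$ the differentials $(Df_1)_x,\dots,(Df_k)_x$ have pairwise disjoint supports: the set $E_{ij}=\{x: (Df_i)_x\ne0 \text{ and } (Df_j)_x\ne0\}$ is null for $i\ne j$. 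Combined with continuity of the $Df_i$ (from $C^{1,\alpha}$-regularity) and connectedness of $M$, this forces all but one $f_i$ to be constant: if $f_i$ and $f_j$ are both nonconstant then each has an open dense set where its differential is nonzero, and two open dense sets meet, contradicting $|E_{ij}|=0$. So there is a single index $i_0$ with $f_i$ constant for all $i\ne i_0$, and then $F$ nonconstant forces $f_{i_0}$ nonconstant; moreover $\star f_{i_0}^*\vol_{N_{i_0}} = \star F^*\vol_N^\times = \norm{DF}^n = \norm{Df_{i_0}}^n$ a.e., so $f_{i_0}$ is itself a $\vol_{N_{i_0}}$-calibrated curve between $n$-manifolds, hence (by the first displayed Example) a conformal map, giving conclusions (1) and (2).

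The main obstacle I anticipate is the passage from the measure-theoretic pointwise statement to the clean topological dichotomy, i.e.\ making rigorous that ``all but one coordinate is constant'' rather than merely ``the differentials are a.e.\ supported on disjoint sets.'' This requires the right regularity input — that each $f_i$ is continuous with continuous (or at least $L^n$-approximately-continuous) differential and that a nonconstant such map has a dense set of regular points — which should follow from the $n$-harmonicity and the Reshetnyak-type theory referenced in the paper; the delicate point is ruling out wild oscillation of the ``active index'' $i_0(x)$ on a nowhere-dense but positive-measure critical set, which connectedness plus the disjoint-support conclusion handles once continuity of the $Df_i$ is in hand. The algebraic convexity step, by contrast, I expect to be short and is the genuinely dimension-sensitive ingredient that distinguishes $n\ge3$ from the holomorphic case $n=2$.
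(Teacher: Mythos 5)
Your proof is correct in essence and takes a genuinely different, considerably shorter route than the paper. The paper derives Theorem \ref{thm:manifold-Liouville} as a limit $\varepsilon\to0$ of the quantitative stability Theorem \ref{thm:stability-qr-curves}, which in turn rests on a long chain of machinery (normal families, Iwaniec's gauge, local injectivity, piecewise-linear approximation, the Euclidean Liouville theorem). That chain is what produces the small-distortion Theorems \ref{thm:local-embedding} and \ref{thm:stability-main}; for the $K=1$ statement alone you bypass it. Your core observation is the right one: once the pointwise algebraic rigidity (your convexity/AM--GM argument is a clean alternative to Lemma \ref{lemma:linear-rigidity}) shows that a.e.\ each $Df_i(x)$ is either zero or an orientation-preserving conformal isomorphism, each coordinate $f_i\colon M\to N_i$ is a continuous $W^{1,n}_\loc$ map with $\norm{Df_i}^n=J_{f_i}$ a.e., i.e.\ a $1$-quasiregular mapping between oriented Riemannian $n$-manifolds. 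By the Riemannian Liouville/Ferrand theorem (exactly the assertion the paper records in its first Example), a nonconstant such map is a $C^\infty$ conformal local homeomorphism with nowhere-vanishing differential. If two coordinates were nonconstant the pointwise disjointness of supports (a null set where both differentials are nonzero) would be violated on all of $M$, a set of positive measure, so exactly one survives, and it is conformal.

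Two small repairs to your write-up. First, you invoke Section \ref{sec:n-harmonic} to conclude each $f_i$ is $n$-harmonic and $C^{1,\alpha}$, but Theorem \ref{thm:Hardt-Lin} is stated and proved only for Euclidean targets; its proof uses contractibility of $\R^m$ to identify $\int F^*\omega$ with $\int v^*\omega$, and does not transfer verbatim to a manifold $N_i$. The clean replacement, which you essentially already have from your pointwise step, is to observe that $f_i$ is $1$-quasiregular and cite the Liouville/Ferrand statement directly. Second, once the nonconstant $f_i$ are known to be conformal, their differentials vanish nowhere, so the passage through a ``dense set of regular points'' and unique continuation is unnecessary and can be cut: the contradiction with $|E_{ij}|=0$ holds at every point, not just on a dense set.

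One further remark worth keeping in mind: the trade-off for the brevity is that your argument says nothing about curves of small but nonunit distortion. The paper's detour through normal families and Iwaniec's gauge is exactly what buys the quantitative stability and local injectivity statements; your argument, relying as it does on the exact equality $\norm{Df_i}^n=J_{f_i}$ and hence on the classical $K=1$ Liouville theorem, degrades immediately once $K>1$.
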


Note that, by Ferrand's theorem, $f_{i_0}$ and hence also $F$ are $C^\infty$-smooth. In the particular case that $M$ is a domain in $\bS^n$ and $N = (\bS^n)^k$, we further have by Liouville's theorem that $f_{i_0}$ is a restriction of a M\"obius transformation.

We may interpret Theorem \ref{thm:manifold-Liouville} also in terms of minimal surfaces. Since the bundle $G(\vol_N^\times) \to N$, where $G(\vol_N^\times) = \bigcup_{p\in N} G(\vol_N^\times,p)$, is a product bundle with discrete fibers, it is easy to see that a $C^1$-smooth minimal surface is contained in a submanifold, which is an isometric copy of a factor $N_i$ in $N$. Theorem \ref{thm:manifold-Liouville} now shows that image of an $\vol_N^\times$-calibrated curve is always contained in such a minimal surface.

\subsection{From conformal to quasiconformal geometry}

Theorem \ref{thm:manifold-Liouville} shows that, for $n\ge 3$, calibrated $\vol_N^\times$-curves $M\to N$ exhibit the same rigidity as the conformal mappings between manifolds of the same dimension.  To prove Theorem \ref{thm:manifold-Liouville} we consider curves of small distortion.

A continuous mapping $F\colon M\to N$ between Riemannian manifolds $M$ and $N$ of dimensions $n=\dim M\le \dim N$ is a \emph{$K$-quasiregular $\omega$-curve}, for $K\ge 1$ and a non-vanishing closed form $\omega\in \Omega^n(N)$, if $F$ belongs to the Sobolev space $W^{1,n}_\loc(M,N)$ and satisfies the distortion inequality
\begin{equation}
\label{eq:QRC}
(\norm{\omega}\circ F) \norm{DF}^n \le K (\star F^*\omega)
\end{equation}
almost everywhere in $M$. If $\omega$ is a calibration and $K=1$, we recover Equation \eqref{eq:associated}.

The term quasiregular curve stems from the notion of quasiregular mapping. A continuous mapping $F\colon M\to N$ between oriented Riemannian $n$-manifolds is \emph{$K$-quasiregular for $K\ge 1$} if $F\in W^{1,n}_\loc(M,N)$ and
\[
\norm{DF}^n \le K J_F
\]
almost everywhere in $M$, where $J_F$ is the Jacobian determinant of $F$. Since $J_F = \star F^*\vol_N$, we readily observe that a continuous mapping $M\to N$ in the Sobolev space $W^{1,n}_\loc(M,N)$ is a $K$-quasiregular mapping if and only if it is a $K$-quasiregular $\omega$-curve for a non-vanishing form $\omega \in \Omega^n(N)$.

We refer to monographs of Iwaniec and Martin \cite{Iwaniec-Martin-book}, Reshetnyak \cite{Reshetnyak-book}, and Rickman \cite{Rickman-book} for the theory of quasiregular mappings and to \cite{Heikkila-arXiv-2021},  \cite{Onninen-Pankka-arXiv-2020}, and \cite{Pankka-AASF-2020} for discussions on properties of quasiregular curves. We merely note that, similarly as for quasiregular mappings, the defining condition for quasiregular curves is a $C^0$-closed partial differential relation by \cite[Theorem 1.9]{Pankka-AASF-2020}; see Gromov \cite[Section 1.2.3]{Gromov-PDR-book} for the terminology.

As our first result on quasiregular curves, we show that quasiregular $\vol_N^\times$-curves $M\to N$ of small distortion are local quasiconformal embeddings. 

\begin{theorem}
\label{thm:local-embedding}
Let $n\ge 3$, $k\ge 1$, and $H>1$. Then there exists $\varepsilon=\varepsilon(n,k,H)>0$ having the property that each non-constant $(1+\varepsilon)$-quasiregular $\vol_N^\times$-curve $M\to N$ from an oriented and connected Riemannian $n$-manifold $M$ to a Riemannian product $N=N_1\times \cdots \times N_k$ of oriented Riemannian $n$-manifolds is a local $H$-quasiconformal embedding.
\end{theorem}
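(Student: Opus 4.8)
The plan is to argue by contradiction using a normal families / compactness argument on rescaled curves, and to exploit the closedness of the defining partial differential relation together with the rigidity provided by Theorem~\ref{thm:manifold-Liouville}. Suppose the statement fails for some fixed $n\ge 3$, $k\ge 1$, and $H>1$. Then there is a sequence $\varepsilon_j\to 0$ and non-constant $(1+\varepsilon_j)$-quasiregular $\vol_N^\times$-curves $F_j\colon M\to N$ (with $M$, $N$ as in the statement, possibly varying but we may localize and reduce to balls in $\R^n$ by working in charts) which are not local $H$-quasiconformal embeddings. The failure of the local embedding property must manifest at some scale, so after choosing points $x_j\in M$ witnessing the failure, passing to charts, translating, and rescaling both domain and target by the appropriate factor (normalizing, say, the $n$-energy on a unit ball, or using the standard blow-up at a point of maximal stretching), we obtain a sequence of curves $\tilde F_j$ on a fixed ball $B\subset\R^n$ with uniformly controlled $W^{1,n}$-norms and distortion $(1+\varepsilon_j)\to 1$, which still fail to be $H$-quasiconformal embeddings on a definite sub-ball.

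Next I would extract a limit. Since the target metrics converge (after rescaling) to flat metrics on the factors and the curves have uniformly bounded $n$-energy, one gets weak $W^{1,n}_\loc$ convergence, and by the higher-integrability / self-improving regularity available for quasiregular curves (the $n$-harmonicity discussed in Section~\ref{sec:n-harmonic} gives $C^{1,\alpha}_\loc$ bounds, or at least local uniform convergence) one passes to a subsequence converging locally uniformly and in $W^{1,n}_\loc$ to a limit curve $F_\infty\colon B\to N_\infty$. Because the quasiregular curve condition \eqref{eq:QRC} is a $C^0$-closed partial differential relation (as cited from \cite[Theorem~1.9]{Pankka-AASF-2020}) and $\varepsilon_j\to0$, the limit $F_\infty$ satisfies \eqref{eq:associated}, i.e.\ it is a $\vol_{N_\infty}^\times$-calibrated curve on $B$ into a product of $n$-manifolds. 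One must also ensure $F_\infty$ is non-constant: this is where the normalization in the blow-up is essential — the scaling is chosen precisely so that the derivative does not degenerate in the limit, e.g.\ $\norm{D\tilde F_j}$ is bounded below on a definite set, which survives passing to the limit by local uniform convergence of the derivatives (using the $C^{1,\alpha}$ bound).

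Now Theorem~\ref{thm:manifold-Liouville} applies to $F_\infty$: up to relabeling, $f_{\infty,1}$ is a conformal map and $f_{\infty,i}$ is constant for $i\ne 1$. A conformal map between Riemannian $n$-manifolds is in particular a local homeomorphism with non-vanishing conformal differential, hence a local $1$-quasiconformal embedding; near the blow-up point it is therefore a local $H'$-quasiconformal embedding for any $H'>1$, in particular for some $H'<H$. But then, by stability (the local uniform $C^{1,\alpha}$-convergence $\tilde F_j\to F_\infty$ and convergence of the target metrics), for $j$ large $\tilde F_j$ is also a local $H$-quasiconformal embedding on the relevant sub-ball — contradicting the choice of the $\tilde F_j$. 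I expect the main obstacle to be the blow-up set-up: choosing the right rescaling so that (a) the distortion still tends to $1$, (b) the limit is non-constant, and (c) the failure of the $H$-quasiconformal embedding property is inherited in the limit in a usable form — this requires carefully quantifying what ``fails to be a local $H$-quasiconformal embedding'' means (failure of injectivity on small balls, or failure of the two-sided derivative/quasisymmetry bounds) and ensuring that the chosen normalization converts it into a scale-invariant statement stable under the limiting procedure. A secondary technical point is upgrading weak $W^{1,n}$ convergence to the strong local convergence of derivatives needed to pass conformality and the embedding property to the limit; this should follow from the $n$-harmonic regularity theory, but the uniformity of those estimates under varying (converging) metrics needs to be checked.
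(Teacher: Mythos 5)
Your contradiction--blow-up scheme differs fundamentally from the paper's route: the paper first proves the Euclidean version (Theorem~\ref{thm:local-injectivity}) by Iwaniec's quantitative method --- approximating a curve of small distortion by a calibrated curve in the sup-norm via the gauge function $\tau^{n,k}$ (Proposition~\ref{prop:1-qr-near}), then establishing a Harnack estimate (Proposition~\ref{prop-iwaniecstabilitylemma3}) and a distortion estimate (Proposition~\ref{prop:iwaniec-distortion}) --- and only afterwards deduces Theorem~\ref{thm:local-embedding} by a one-line reduction through $(1+1/a)$-bilipschitz exponential charts. The main gap in your proposal is the final stability step: ``by stability\ldots for $j$ large $\tilde F_j$ is also a local $H$-quasiconformal embedding.'' This does not follow from locally uniform convergence $\tilde F_j\to F_\infty$, even with uniform H\"older bounds, because a map being a local $H$-quasiconformal embedding is a pointwise-at-every-scale condition that is not inherited by uniformly close maps. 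You appeal to ``local uniform $C^{1,\alpha}$-convergence,'' but the $C^{1,\alpha}$ regularity of Theorem~\ref{thm:Hardt-Lin} holds only for \emph{calibrated} ($K=1$) curves, which are genuine $n$-energy minimizers; a $(1+\varepsilon_j)$-quasiregular curve with $\varepsilon_j>0$ is only a quasi-minimizer and is a priori merely locally $\alpha$-H\"older (Section~\ref{sec:normal-family}), so there is no $C^1$-convergence of derivatives to lean on. Precisely this obstruction is what the Harnack and distortion estimates in Section~\ref{sec:Iwaniec} are designed to circumvent: they convert closeness to a M\"obius transformation in the sup-norm directly into the two-sided metric ratio bound, without ever differentiating the quasiregular curve.

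There is also a dependency issue you should avoid: you invoke Theorem~\ref{thm:manifold-Liouville}, but in the paper's architecture Theorem~\ref{thm:manifold-Liouville} is derived from Theorem~\ref{thm:stability-qr-curves}, which in turn relies on the local embedding Theorem~\ref{thm:local-injectivity} (through the piecewise linear approximation, Theorem~\ref{thm:main-approx}). The rigidity input you actually need --- and which is available without circularity --- is the \emph{Euclidean} Liouville theorem (Theorem~\ref{thm:Euclidean-Liouville}), proved independently via $n$-harmonicity in Section~\ref{sec:n-harmonic}; your localization to Euclidean charts already puts you in that setting. Finally, you correctly flag the blow-up normalization as delicate, but this is more than a secondary concern: converting the failure of the metric quasiconformality condition (a pointwise $\limsup$ as $r\to 0$) into a scale-invariant, compactness-stable statement that also guarantees a non-constant limit is essentially the content one would have to reprove, and the paper's gauge-function formulation is exactly a clean way of packaging that compactness so that the conclusions come out as explicit inequalities (Lemma~\ref{lemma:iwaniec-lemma4}, Proposition~\ref{prop:iwaniec-distortion}) rather than as a contradiction.
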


Since the analytic definition of quasiconformal mappings is not at our disposal in this context, Theorem \ref{thm:local-embedding} is stated in terms of the metric definition of quasiconformal mappings. A local embedding $F\colon M\to N$ between Riemannian manifolds is \emph{$H$-quasiconformal for $H\ge 1$}, if
\[
\limsup_{r\to 0} \frac{\sup_{d_M(y,x)=r} d_N(F(y),F(x))}{\inf_{d_M(y,x)=r} d_N(F(y),F(x))} \le H
\]
for every $x\in M$. We refer to Heinonen and Koskela \cite{Heinonen-Koskela-Acta-1998} for the metric theory of quasiconformal mappings.

Our interest to Theorem \ref{thm:local-embedding} stems from the classical Reshetnyak's theorem for quasiregular mappings \cite{Reshetnyak-Sibirsk-1967}: \emph{a non-constant quasiregular mapping between oriented Riemannian $n$-manifolds is discrete and open}. Although quasiregular curves into higher dimensional targets are never open, Theorem \ref{thm:local-embedding} shows that $\vol_N^\times$-quasiregular curves $M\to N$ of small distortion are discrete in the case $\dim M \ge 3$. It should be noted that this discreteness property fails for quasiregular $\vol_{(\R^n)^k}^\times$-curves of large distortion and even for quasiregular $\omega_\sym$-curves of small distortion.

For curves of large distortion, the seminal example is by Iwaniec, Verchota, and Vogel \cite{Iwaniec-Verchota-Vogel-ARMA-2002}: \emph{There exists a Lipschitz mapping $F=(f_1,f_2) \colon \C\to \C^2$ which satisfies $J_{f_1}+J_{f_2} =1$ in the upper half-plane and is constant in the lower half-plane.} To our knowledge, this map constructed by Iwaniec, Verchota, and Vogel is the first example of a non-constant quasiregular $\omega_\sym$-curve, which is not discrete. We show in Section \ref{sec:IVV} that, for each $n\ge 3$, there exists $k\in \N$ and a non-constant quasiregular $\vol_{(\R^n)^k}^\times$-curve $F \colon \R^n \to (\R^n)^k$ which is constant in the lower half-space $\R^n_-$. In particular, these non-constant curves are not discrete.

The failure of discreteness for quasiregular $\omega_\sym$-curves of small distortion follows from an example of Rosay \cite{Rosay-ANF-2010}: \emph{For every $K>1$, there exist a non-constant map $F \colon \C\to \C^2$ satisfying
\begin{equation}
\label{eq:partial-bar}
\left| \partial_{\bar z} F \right| \le \frac{K-1}{K+1} \left| \partial_z F \right|,
\end{equation}
which is not discrete.} In particular, Rosay's example shows that Beltrami systems $\partial_z F = \mu \partial_z F$ in $\C^2$ carry non-discrete solutions independently of the size of the Beltrami coefficient $\mu$. Rosay's map is also a $K$-quasiregular $\omega_\sym$-curve. We discuss this example in more detail in Section \ref{sec:Rosay}.

\subsection{Stability of Liouville theorems}

We return now to an analog of Theorem \ref{thm:manifold-Liouville} for quasiregular $\vol_N^\times$-curves $M\to N$ of small distortion. 
We show that, heuristically, these curves do not change their direction in the Grassmannian $\widetilde{\Gr}(n,TN)$. 

\begin{theorem}
\label{thm:stability-main}
Let $n\ge 3$ and $k\ge 1$. Then there exists $\varepsilon=\varepsilon(n,k)>0$ for the following. Let $M$ be an oriented and connected Riemannian $n$-manifold and let $N=N_1\times \cdots \times N_k$ be a Riemannian product of oriented Riemannian $n$-manifolds. Then, for a non-constant $(1+\varepsilon)$-quasiregular $\vol_N^\times$-curve $F=(f_1,\ldots, f_k) \colon M\to N$, there exists a unique index $i_0\in \{1,\ldots, k\}$ for which the coordinate map $f_{i_0} \colon M \to N_{i_0}$ is a quasiregular local homeomorphism.
\end{theorem}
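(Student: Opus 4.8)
The plan is to prove Theorem \ref{thm:stability-main} by reducing it to Theorem \ref{thm:local-embedding}, which already gives us that for $\varepsilon$ small the curve $F$ is a local quasiconformal embedding, and then analysing the pointwise structure of the differential. The heuristic is that when $DF_x$ is a conformal $n\times n k$ matrix built from the blocks $Df_1(x),\ldots,Df_k(x)$ with $\norm{DF_x}^n \le K \sum_i \star f_i^*\vol_{N_i}(x)$, then for $K$ close to $1$ all of the "volume" concentrates in a single block. I would first establish a linear-algebra lemma: there is $\delta=\delta(n,k)>0$ such that if $L=(L_1,\ldots,L_k)\colon \R^n \to (\R^n)^k$ is conformal (as in Lemma \ref{lemma-linearmaprigidity}) and $\norm{L}^n \le (1+\delta)\sum_i \det L_i$ (interpreting $\det L_i$ with sign, so the inequality forces each $\det L_i > -$ small), then there is a unique index $i$ with $\det L_i \ge \tfrac{1}{2}\norm{L}^n$ and $\norm{L_j} \le c(\delta)\norm{L}$ for $j\ne i$ with $c(\delta)\to 0$ as $\delta\to 0$. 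This is elementary: conformality of the whole map forces each $L_i$ to be a conformal $n\times n$ block scaled by some $\lambda_i \ge 0$ with $\sum \lambda_i^2 = \norm{L}^2$ (up to orientation signs), and then $\norm{L}^n = (\sum \lambda_i^2)^{n/2}$ versus $K\sum_i \pm\lambda_i^n$; for $n\ge 3$ the power-mean/Hölder gap $(\sum \lambda_i^2)^{n/2} \ge \sum \lambda_i^n$ with equality iff exactly one $\lambda_i$ is nonzero, and the deficit is quantitative. This is the step I expect to be routine but must be done with care to get the dependence $\delta = \delta(n,k)$ and the orientation signs right (the wrong-orientation blocks actively hurt, so they must be tiny).

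Next I would globalize. Apply the linear lemma a.e.\ on $M$ with $\delta$ chosen so that $1+\varepsilon \le 1+\delta$: this produces a measurable index function $i_0\colon M \to \{1,\ldots,k\}$ defined wherever $DF_x \ne 0$, hence a.e.\ on the set where $F$ is non-degenerate; since $F$ is a non-constant local quasiconformal embedding by Theorem \ref{thm:local-embedding}, $\norm{DF}>0$ a.e.\ and in fact $J_{f_{i_0}}$ dominates. To see that $i_0$ is (a.e.) constant, I would argue by continuity: $F$ is continuous and, being a $(1+\varepsilon)$-quasiregular curve, it is a local quasiconformal embedding, so on a small ball $B$ around any point the coordinate $f_{i_0}$ on a positive-measure subset is a "large" conformal-type map while the others are uniformly small in the operator-norm sense; the resulting lower bound on the local index of $f_{i_0}$ and the $C^0$-estimate preventing any other $f_j$ from carrying comparable metric distortion force $i_0$ to be locally constant, and then constant on connected $M$. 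More cleanly: the sets $U_i = \{x : i_0(x)=i \text{ a.e.\ near } x\}$ are open, disjoint, and cover a full-measure subset of $M$; connectedness of $M$ plus the fact that $F$ is a local embedding (so the "small" coordinates cannot suddenly take over on a null set without violating the metric quasiconformality estimate) pins down a single index.

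Finally, with $i_0$ fixed, I would show $f_{i_0}\colon M\to N_{i_0}$ is a quasiregular local homeomorphism. The distortion inequality $\norm{DF}^n \le K\sum_i \star f_i^*\vol_{N_i}$ together with $\norm{Df_{i_0}} \le \norm{DF}$ and $\star f_j^*\vol_{N_j} \le \norm{Df_j}^n \le c(\varepsilon)^n\norm{DF}^n$ for $j\ne i_0$ gives $\norm{Df_{i_0}}^n \le \norm{DF}^n \le K(1 + (k-1)c(\varepsilon)^n(\cdots))\, \star f_{i_0}^*\vol_{N_{i_0}}$ after absorbing the small terms, i.e.\ $f_{i_0}$ is $K'$-quasiregular with $K' = K'(n,k,\varepsilon) \to 1$; we also need $\star f_{i_0}^*\vol_{N_{i_0}} > 0$, which the linear lemma delivers (the dominant block has positive Jacobian). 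Then Reshetnyak's theorem makes $f_{i_0}$ discrete and open, and combining with the fact that $F = (f_1,\ldots,f_k)$ is a local embedding — so $f_{i_0}$, having the same local injectivity-obstruction as $F$ in a neighborhood, must itself be locally injective — upgrades "discrete and open" to "local homeomorphism". Uniqueness of $i_0$ is then immediate from uniqueness in the linear lemma. The main obstacle I anticipate is the globalization step — passing from the pointwise/a.e.\ index to a genuinely constant index on $M$ — which requires using the metric local-embedding conclusion of Theorem \ref{thm:local-embedding} rather than just the a.e.\ differential inequality, since a priori the index function is only measurable.
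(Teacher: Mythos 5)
Your proposal identifies the right ingredients (the pointwise linear-algebra structure, the local embedding result, and the dominance of one coordinate), but it differs substantially from the paper's route and contains two gaps, the second of which is exactly the crux of the proof.

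First, your linear-algebra lemma is stated and argued incorrectly. You assume that $L = DF_x$ is conformal "as in Lemma~\ref{lemma-linearmaprigidity}," but that lemma applies only to \emph{calibrated} linear maps, i.e.\ $K=1$; for a $(1+\varepsilon)$-quasiregular $\vol_N^\times$-curve with $\varepsilon>0$ the differential need not be conformal. Moreover, even for conformal $L$, your claim that "conformality of the whole map forces each $L_i$ to be a conformal $n\times n$ block scaled by some $\lambda_i\ge 0$" is false: conformality of $L$ only says $\sum_i L_i^\top L_i = c^2 I$, which does not force each $L_i^\top L_i$ to be scalar (e.g.\ $L_1 = \mathrm{diag}(1,2,3)$, $L_2 = \mathrm{diag}(3,\sqrt6,1)$ in $\R^3$). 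The correct quantitative statement you want is Proposition~\ref{prop:linear-1+epsilon}, whose proof neither assumes $L$ conformal nor concludes that the non-dominant blocks are conformal; it uses a projection/Hadamard argument instead of the power-mean inequality you sketch. The power-mean heuristic is a useful intuition but does not survive once the blocks are not conformal scalings.

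Second, and more seriously, the globalization step is a genuine gap, and you acknowledge but do not close it. Once you have the pointwise lemma, you obtain a merely measurable index function $x\mapsto i_0(x)$ on $M$. Because $DF$ is only an $L^n_{\loc}$ field, there is no a priori reason that $i_0$ is locally a.e.\ constant: the derivative could oscillate between the $k$ "almost-maximal" sectors of the Grassmannian on a positive-measure set, and your sets $U_i=\{x:i_0=i\text{ a.e.\ near }x\}$ could fail to cover a full-measure subset of $M$. This is precisely the obstruction highlighted in the paper's outline, and the example of Section~\ref{sec:IVV} shows it really is a danger for larger distortion. Appealing to the metric quasiconformality of $F$ (Theorem~\ref{thm:local-embedding}) is a reasonable instinct, but it does not by itself control oscillation of a measurable index function. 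The paper resolves this via the piecewise-linear approximation machinery (Theorem~\ref{thm:main-approx} and Proposition~\ref{prop:pl-unique-coordinate}): quasiregular $\vol^\times$-curves of small distortion are uniformly approximated by piecewise affine quasiregular $\vol^\times$-curves, and on two adjacent simplices the dominant index is forced to agree because the two affine pieces agree on the common face; passing to the limit with a normal-family/weak-convergence argument then yields a single global index and the quasiregularity of $f_{i_0}$. Without some such regularization mechanism your argument cannot rule out the oscillation, so the proposal is incomplete at the decisive step.
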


Theorem \ref{thm:stability-main} resembles a theorem of Ball and James \cite{Ball-James-ARMA-1987} on the convergence of differentials of Sobolev maps. Indeed, by Theorem \ref{thm:manifold-Liouville}, $\vol_{(\R^n)^k}^\times$-calibrated curves $\Omega \to (\R^n)^k$ have rigid derivatives. Thus we may view Theorem \ref{thm:stability-main} as a convergence theorem of differentials as the distortion of the quasiregular $\vol_{(\R^n)^k}^\times$-curves tends to one. See also Kirchheim and Sz\'ekelyhidi \cite{Kirchheim-Szekelyhidi-Crelle-2008} on related results on the rank one convex hull of differentials of Lipschitz maps.

\begin{remark}
The smallness of the distortion is crucial in Theorem \ref{thm:stability-main}. The curve constructed in Section \ref{sec:IVV} gives also an example of a quasiregular $\vol_{(\R^n)^k}^\times$-curve $\R^n \to (\R^n)^k$ without quasiregular component mappings.
\end{remark}

We record two corollaries of Theorem \ref{thm:stability-main}. First we observe that these curves have the same local properties as quasiregular mappings; see e.g.~Rickman \cite[Chapter I]{Rickman-book}.

\begin{corollary}
Let $n\ge 3$ and $k\ge 1$. Then there exists $\varepsilon_0=\varepsilon_0(n,k)>0$ for the following. Let $F\colon M\to N$ be a non-constant $(1+\varepsilon_0)$-quasiregular map from an oriented Riemannian $n$-manifold $M$ to a product $N=N_1\times \cdots \times N_k$ of oriented Riemannian $n$-manifolds. Then
\[
\star F^*\vol_N^\times > 0
\]
almost everywhere in $M$ and the set
\[
B_F = \{ x\in M \colon F \text{ is not a local embedding at } x\}
\]
has topological dimension at most $n-2$ and Hausdorff $n$-measure zero.
\end{corollary}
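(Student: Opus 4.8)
The plan is to derive the corollary directly from Theorem~\ref{thm:stability-main}. I would set $\varepsilon_0=\varepsilon_0(n,k)$ to be the constant $\varepsilon(n,k)$ furnished by Theorem~\ref{thm:stability-main}, so that for a non-constant $(1+\varepsilon_0)$-quasiregular $\vol_N^\times$-curve $F=(f_1,\dots,f_k)\colon M\to N$ there is a unique index $i_0\in\{1,\dots,k\}$ for which $f_{i_0}=\pi_{i_0}\circ F\colon M\to N_{i_0}$ is a quasiregular local homeomorphism. Everything then reduces to two elementary observations about $f_{i_0}$.

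To prove that $\star F^*\vol_N^\times>0$ almost everywhere, I would use that the projection $\pi_{i_0}\colon N\to N_{i_0}$ is $1$-Lipschitz, so that $\norm{Df_{i_0}}\le\norm{DF}$ almost everywhere; combining this with the elementary bound $J_{f_{i_0}}\le\norm{Df_{i_0}}^n$ and with the distortion inequality \eqref{eq:QRC} (in which $\norm{\vol_N^\times}\equiv1$) gives
\[
\star F^*\vol_N^\times\ \ge\ \tfrac{1}{1+\varepsilon_0}\,J_{f_{i_0}}\qquad\text{almost everywhere in }M.
\]
Since $f_{i_0}$ is a non-constant quasiregular mapping between oriented Riemannian $n$-manifolds (a local homeomorphism onto an $n$-manifold with $n\ge1$ cannot be constant), its Jacobian is positive almost everywhere by the basic theory of quasiregular mappings \cite{Rickman-book}, and hence so is $\star F^*\vol_N^\times$.

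Finally, I would show that $B_F=\emptyset$, which makes the dimension and measure assertions immediate. Given $x\in M$, I would choose an open neighbourhood $U$ of $x$ on which the local homeomorphism $f_{i_0}$ is injective; since $f_{i_0}=\pi_{i_0}\circ F$, the restriction $F|_U$ is then injective as well, and passing to a relatively compact open $V$ with $x\in V$ and $\overline V\subset U$ shows that $F|_{\overline V}$ is a continuous injection from a compact space into a Hausdorff space, hence a homeomorphism onto its image; thus $F$ is a local embedding at $x$. Consequently $B_F=\emptyset$, which has topological dimension $-1\le n-2$ and vanishing Hausdorff $n$-measure. I do not anticipate a genuine obstacle here: the only non-formal ingredients are the classical fact that a non-constant quasiregular mapping has almost everywhere positive Jacobian and the elementary observation that a map is a local embedding wherever one of its coordinate projections is a local homeomorphism.
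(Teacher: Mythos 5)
Your argument is correct and follows the intended derivation from Theorem~\ref{thm:stability-main}: since $\varepsilon_0$ can be chosen so that the coordinate map $f_{i_0}$ is a quasiregular local homeomorphism, $B_F=\emptyset$ (making the dimension and measure assertions, phrased to echo the branch set properties of general quasiregular maps, trivially satisfied), and the positivity of $\star F^*\vol_N^\times$ follows from the chain $\star F^*\vol_N^\times\ge (1+\varepsilon_0)^{-1}\norm{DF}^n\ge(1+\varepsilon_0)^{-1}\norm{Df_{i_0}}^n\ge(1+\varepsilon_0)^{-1}J_{f_{i_0}}>0$ a.e., the last inequality being Reshetnyak's theorem for the non-constant quasiregular map $f_{i_0}$.
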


Second, we observe that, if $M$ is conformally equivalent to a complete manifold with finite volume and $N$ is simply connected, we have, by the Gromov--Zorich global homeomorphism theorem for quasiregular mappings, that the coordinate mapping $f_{i_0}\colon M\to N_{i_0}$ carrying the curve $F\colon M\to N$ is an embedding with a dense image in $N_{i_0}$; see Zorich  \cite{Zorich-MatSb-1967} for the Euclidean global homeomorphism theorem and Gromov \cite[Remark 6.30]{Gromov-book} and Zorich \cite{Zorich-FAP-2000} for the Riemannian case, or \cite{Holopainen-Pankka-AASF-2004} for the Gromov--Zorich theorem for mappings of finite distortion between Riemannian manifolds. In particular, we have the following corollary.

\begin{corollary}
\label{cor:stability-qr-curves}
Let $n\ge 3$ and $k\ge 1$. Then there exists $\varepsilon_0=\varepsilon_0(n,k)>0$ having the property that, if there exists a non-constant $(1+\varepsilon_0)$-quasiregular $\vol_N^\times$-curve $\R^n \to N$ for $N=N_1\times \cdots \times N_k$, then one of the factors $N_i$ of $N$ is either covered by $\R^n$ or $\bS^n$.
\end{corollary}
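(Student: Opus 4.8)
The plan is to derive the corollary from Theorem~\ref{thm:stability-main} together with the Gromov--Zorich global homeomorphism theorem recalled just before it. First I would set $\varepsilon_0=\varepsilon_0(n,k)$ to be the constant $\varepsilon(n,k)$ furnished by Theorem~\ref{thm:stability-main}. Then, given a non-constant $(1+\varepsilon_0)$-quasiregular $\vol_N^\times$-curve $F=(f_1,\ldots,f_k)\colon\R^n\to N$, Theorem~\ref{thm:stability-main} produces an index $i_0$ for which $f_{i_0}\colon\R^n\to N_{i_0}$ is a quasiregular local homeomorphism; being a local homeomorphism with domain $\R^n$, the map $f_{i_0}$ is in particular non-constant and open.

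The second step is to pass to a simply connected target. Let $p\colon\widetilde N_{i_0}\to N_{i_0}$ be the universal covering, equipped with the Riemannian metric pulled back by $p$, so that $p$ is a local isometry. Since $\R^n$ is simply connected, $f_{i_0}$ lifts to a map $\widetilde f_{i_0}\colon\R^n\to\widetilde N_{i_0}$ with $p\circ\widetilde f_{i_0}=f_{i_0}$, and, $p$ being a local isometry, this lift is again a non-constant quasiregular local homeomorphism, now with simply connected target. I would also record that $\R^n$ is conformally equivalent to a complete Riemannian $n$-manifold of finite volume --- for instance the metric $\rho^2\,|dx|^2$ with $\rho\equiv 1$ near the origin and $\rho(x)=(|x|\log|x|)^{-1}$ for $|x|$ large is complete, since radial rays have infinite length, and has finite total volume $\int\rho^n\,dx$ --- and that being a quasiregular local homeomorphism between $n$-manifolds is invariant under conformal changes of the source metric. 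Thus $\widetilde f_{i_0}$ satisfies the hypotheses of the Gromov--Zorich theorem, which then yields that $\widetilde f_{i_0}$ is a homeomorphism of $\R^n$ onto an open dense subset $U=\widetilde f_{i_0}(\R^n)$ of $\widetilde N_{i_0}$ whose complement $E=\widetilde N_{i_0}\setminus U$ has zero conformal $n$-capacity; in particular $E$ is closed, totally disconnected, and does not locally separate $\widetilde N_{i_0}$. Consequently $U$ is homeomorphic to $\R^n$ and thus has exactly one end.

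The last step is a topological dichotomy. If $E=\varnothing$, then $\widetilde N_{i_0}=U\cong\R^n$, so $N_{i_0}$ is covered by $\R^n$. If $E\neq\varnothing$, I would count the ends of $U=\widetilde N_{i_0}\setminus E$: since $E$ is closed, totally disconnected and non-separating, removing it from the connected manifold $\widetilde N_{i_0}$ produces a distinct new end for every point of $E$, in addition to the ends already carried by $\widetilde N_{i_0}$. As $U$ has exactly one end, $E$ must consist of a single point $q$ and $\widetilde N_{i_0}$ must be compact. Then $\widetilde N_{i_0}=U\cup\{q\}$ is a compact Hausdorff space in which $U\cong\R^n$ is open and dense, hence is the one-point compactification of $\R^n$ and therefore homeomorphic to $\bS^n$; so $N_{i_0}$ is covered by $\bS^n$. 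In either case one of the factors $N_i$ of $N$ is covered by $\R^n$ or by $\bS^n$, which proves the corollary.

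I expect the only genuinely delicate point to be the end-counting step, which uses crucially that the exceptional set of the Gromov--Zorich theorem is not merely nowhere dense but of zero $n$-capacity, hence totally disconnected; with density alone the dichotomy would fail, since for instance $\bS^n$ minus a flat $(n-1)$-disk is again homeomorphic to $\R^n$. The remaining ingredients --- invoking Theorem~\ref{thm:stability-main}, the lift to the universal cover, and the conformal change of the source metric needed to meet the completeness and finite-volume hypotheses --- are routine.
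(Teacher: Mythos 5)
Your proof is correct and follows the route the paper itself sketches in the paragraph preceding the corollary: extract a quasiregular local-homeomorphism coordinate map via Theorem~\ref{thm:stability-main}, lift to the universal cover, equip $\R^n$ with a conformally equivalent complete finite-volume metric, and apply the Gromov--Zorich global homeomorphism theorem to conclude that the lift is a homeomorphism onto a dense open set with zero-capacity complement. The paper leaves implicit the final topological dichotomy (that a simply connected $n$-manifold containing a dense open copy of $\R^n$ with zero-capacity complement must be $\R^n$ or $\bS^n$), and your end-counting argument, which correctly emphasizes that zero capacity (hence total disconnectedness and non-local-separation) rather than mere density is what makes it work, supplies exactly that missing step.
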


To our knowledge, it is an open question whether quasiregular curves $\R^n\to N$ of arbitrary distortion have a similar property, that is, whether one of the factors $N_i$ of $N=N_1\times \cdots \times N_k$ admits a non-constant quasiregular map from $\R^n$ if there exists a non-constant quasiregular $\vol_N^\times$-curve $ \R^n \to N$. Such manifolds $N_i$ are called quasiregularly elliptic. We refer to Bonk and Heinonen \cite{Bonk-Heinonen-Acta-2001} and \cite{Prywes-Annals-2019} for discussion on quasiregularly elliptic manifolds. Note that the problem is not local in the sense that it is easy to find quasiregular $\vol_{(\R^n)^k}^\times $-curves $F=(f_1,\ldots, f_k) \colon \R^n \to (\R^n)^k$ for which none of the coordinate mappings $f_i$ are quasiregular; see the example in Section \ref{sec:IVV}.

\subsection*{Outline of the proofs}

Since the methods used in the proofs of the main theorems may have independent interest, we give an outline leading to the proofs of the main theorems. 

The main issue in the proofs of Theorems \ref{thm:manifold-Liouville} and \ref{thm:stability-qr-curves} is that the differential $DF$ of the curve $F\colon M\to N$ is merely an $L^n$-function. Therefore, although the maximal planes $G(\vol_N^\times ,{F(x)})$ of the calibration $\vol_N^\times$ form a discrete set in the oriented Grassmannian $\widetilde{\Gr}(n,T_{F(x)}N)$ for each $x\in M$, we do not have a priori control on the oscillation of the differential between the maximal planes. The example in Section \ref{sec:IVV} shows that for curves of large distortion this oscillation is, in fact, possible. 

To obtain additional a priori regularity, we begin by showing that calibrated curves $M\to \R^m$ are $C^{1,\alpha}$-regular; see Theorem \ref{thm:Hardt-Lin}. This follows essentially from the local quasiminimality of quasiregular curves (\cite[Theorem 1.6]{Pankka-AASF-2020}). After this a priori regularity result, we work with quasiregular $\vol_{(\R^n)^k}$-curves $\Omega \to (\R^n)^k$ of small distortion, where $\Omega\subset \R^n$ is a domain, and prove first Euclidean versions of the results stated in the introduction.

Using the H\"older continuity of the derivative, we prove in Section \ref{sec:n-harmonic} a Euclidean version (Theorem \ref{thm:Euclidean-Liouville}) of the Liouville theorem (Theorem \ref{thm:manifold-Liouville}). Having this Euclidean version at our disposal, we show, using a method of Iwaniec \cite{Iwaniec-PAMS-1987} for quasiregular mappings, that $\vol_{(\R^n)^k}^\times$-curves $\Omega \to (\R^n)^k$ of small distortion are uniformly close to $\vol_{(\R^n)^k}^\times$-calibrated curves; see Proposition \ref{prop:1-qr-near} for a precise statement. In the course of the proof, we show that bounded $\vol_{(\R^n)^k}^\times$-curves form a normal family (Theorem \ref{thm:normal}).

From Proposition \ref{prop:1-qr-near} we conclude that quasiregular $\vol_{(\R^n)^k}^\times$-curves of small distortion are local quasiconformal embeddings (Theorem \ref{thm:local-injectivity}). From this Euclidean result, Theorem \ref{thm:local-embedding} follows by a simple covering argument with bilipschitz charts.

We use the Euclidean local embedding theorem (Theorem \ref{thm:local-injectivity}) to prove the following version of Kopylov's piecewise linear approximation theorem \cite{Kopylov-1972} for quasiregular $\vol_{(\R^n)^k}^\times$-curves. 

\begin{theorem}
\label{thm:4}
Let $n\ge 3$, $k\ge 1$, and $K\ge 1$.  Then there exists $\varepsilon=\varepsilon(n,k,K)>0$ for the following. For $\delta>0$, a domain $\Omega \subset \R^n$, a compactly contained subdomain $U\Subset \Omega$, and a $(1+\varepsilon)$-quasiregular $\vol_{(\R^n)^k}^\times$-curve $F\colon \Omega\to (\R^n)^k$, there exists a piecewise linear $K$-quasiregular $\vol_{(\R^n)^k}^\times$-curve $\widehat F\colon U \to (\R^n)^k$ satisfying $\norm{F-\widehat F}_\infty < \delta$.
\end{theorem}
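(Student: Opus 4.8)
The plan is to obtain $\widehat F$ as a piecewise linear interpolant of $F$ on a fine, shape-regular triangulation of a neighbourhood of $\overline U$, and to check that, once $1+\varepsilon$ is close enough to $1$ (depending only on $n,k,K$) and the mesh is fine enough (depending on $F$, $U$, $\delta$, $K$), this interpolant satisfies on every simplex the distortion inequality defining a $K$-quasiregular $\vol_{(\R^n)^k}^\times$-curve. First I would record the a priori regularity: by Theorem~\ref{thm:Hardt-Lin} the curve $F$ is $C^{1,\alpha}_{\loc}$, so $DF$ is continuous, hence bounded and uniformly continuous, on the compact set $\overline U\subset\Omega$; since $\norm{\vol_{(\R^n)^k}^\times}\equiv 1$ and both sides of \eqref{eq:QRC} are then continuous, the inequality $\norm{DF}^n\le(1+\varepsilon)\,\star F^*\vol_{(\R^n)^k}^\times$ holds at \emph{every} point of $\Omega$. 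Fix a shape-regular triangulation of $\R^n$ of mesh $h$, let $\mathcal T_h$ be its simplices meeting $\overline U$, and put $W_h:=\bigcup\mathcal T_h$; for $h$ small one has $\overline U\subset\inter W_h\subset W_h\subset\Omega$ since $\dist(\overline U,\partial\Omega)>0$. Let $\widehat F\colon W_h\to(\R^n)^k$ agree with $F$ at the vertices of $\mathcal T_h$ and be affine on each simplex. Then $\widehat F$ is continuous and piecewise linear, $\norm{F-\widehat F}_{L^\infty(W_h)}\le\omega_F(h)$, where $\omega_F$ is the modulus of continuity of $F$ on $W_h$, and, by shape-regularity, on each simplex $\sigma$ the constant linear map $L_\sigma:=D\widehat F|_\sigma$ satisfies $\norm{L_\sigma-DF(v)}\le C(n)\,\omega_{DF}(h)=:\rho(h)$ for every vertex $v$ of $\sigma$; both $\omega_F(h)$ and $\rho(h)$ tend to $0$ as $h\to 0$.

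The crucial point — and the step I expect to be the main obstacle — is that $DF$ does not vanish anywhere in $\Omega$ when $\varepsilon$ is small (depending only on $n,k$); then $c_0:=\inf_{\overline U}\norm{DF}>0$, while of course $C_1:=\sup_{\overline U}\norm{DF}<\infty$. Smallness of the distortion is genuinely needed here: a $C^1$ map that is a local quasiconformal embedding can still have a critical point — for instance $x\mapsto|x|x$ on $\R^n$ is a local $1$-quasiconformal embedding of distortion $2^{n-1}$ — so Theorem~\ref{thm:local-injectivity} alone is not enough. To show $DF(x_0)\neq 0$ I would rescale: since $F$ is a local embedding (Theorem~\ref{thm:local-injectivity}) it is non-constant on every ball, so $o_r:=\sup_{B(x_0,r)}|F-F(x_0)|>0$, and $\tilde F_r(y):=\bigl(F(x_0+ry)-F(x_0)\bigr)/o_r$ is again a $(1+\varepsilon)$-quasiregular $\vol_{(\R^n)^k}^\times$-curve on the unit ball $B(0,1)$ (the $\vol^\times$-curve inequality is invariant under $y\mapsto F(x_0+ry)$ and under scaling the target), with oscillation of order $1$. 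By Proposition~\ref{prop:1-qr-near} together with the normal family Theorem~\ref{thm:normal}, for $\varepsilon$ small $\tilde F_r$ is uniformly close on $B(0,1/2)$ to a $\vol_{(\R^n)^k}^\times$-calibrated curve $G$, which therefore has oscillation bounded away from $0$ on $B(0,1/2)$ and hence, by the Euclidean Liouville Theorem~\ref{thm:Euclidean-Liouville}, is — up to its single active factor — a restriction of a M\"obius transformation; such a $G$ has $\norm{DG}$ bounded below by a constant $c_1=c_1(n,k)>0$ on $B(0,1/4)$. The interior $C^{1,\alpha}$-estimates for quasiregular curves, which are uniform over families with controlled distortion and $L^\infty$-norm, upgrade the $L^\infty$-closeness of $\tilde F_r$ and $G$ to $C^1$-closeness on $B(0,1/4)$, so $\norm{D\tilde F_r}\ge c_1/2>0$ there; but $D\tilde F_r(0)=(r/o_r)\,DF(x_0)$, forcing $DF(x_0)\neq 0$.

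Granting $0<c_0\le\norm{DF}\le C_1$ on $\overline U$, I would finish with a linear-algebra perturbation. The map $L\mapsto\norm{L}^n$ is Lipschitz on $\{\norm L\le C_1+1\}$, and $L\mapsto\star L^*\vol_{(\R^n)^k}^\times=\sum_{i=1}^k\det(\pi_i\circ L)$ is polynomial, hence also Lipschitz there; write $C_2,C_3$ for the constants, depending only on $n,k,C_1$. For any simplex $\sigma\in\mathcal T_h$ and vertex $v$ of $\sigma$, using the pointwise distortion inequality at $v$ and that $\star F^*\vol_{(\R^n)^k}^\times$ at $v$ depends only on $DF(v)$,
\begin{align*}
\norm{L_\sigma}^n
&\le\norm{DF(v)}^n+C_2\,\rho(h)\\
&\le(1+\varepsilon)\,\bigl(\star F^*\vol_{(\R^n)^k}^\times\bigr)(v)+C_2\,\rho(h)\\
&\le(1+\varepsilon)\,\star L_\sigma^*\vol_{(\R^n)^k}^\times+\bigl((1+\varepsilon)C_3+C_2\bigr)\rho(h),
\end{align*}
while $\star L_\sigma^*\vol_{(\R^n)^k}^\times\ge\bigl(\star F^*\vol_{(\R^n)^k}^\times\bigr)(v)-C_3\,\rho(h)\ge\tfrac12 c_0^n-C_3\,\rho(h)\ge\tfrac14 c_0^n$ once $h$ is small. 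Choosing $\varepsilon=\varepsilon(n,k,K)>0$ small enough that $\varepsilon\le\min\{1,(K-1)/2\}$ (and small enough for the non-vanishing of $DF$ above and for Theorem~\ref{thm:local-injectivity}), and then $h$ small enough that $8\bigl((1+\varepsilon)C_3+C_2\bigr)\rho(h)\le(K-1)\,c_0^n$ and $\omega_F(h)<\delta$, the two estimates give
\[
\norm{L_\sigma}^n\le\Bigl(1+\varepsilon+\tfrac{4((1+\varepsilon)C_3+C_2)\rho(h)}{c_0^n}\Bigr)\star L_\sigma^*\vol_{(\R^n)^k}^\times\le K\,\star L_\sigma^*\vol_{(\R^n)^k}^\times
\]
on every simplex. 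Since $\widehat F$ is affine on each simplex and $\norm{\vol_{(\R^n)^k}^\times}\equiv 1$, this is exactly the statement that $\widehat F\colon W_h\to(\R^n)^k$ is a $K$-quasiregular $\vol_{(\R^n)^k}^\times$-curve; restricting it to $U\subset W_h$ and recalling $\norm{F-\widehat F}_{L^\infty(U)}\le\omega_F(h)<\delta$ completes the proof. Everything here is soft apart from the non-vanishing of $DF$ in the previous paragraph, which is precisely where the smallness of the distortion is used.
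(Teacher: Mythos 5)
Your proof relies, at its very first step, on the assertion that a $(1+\varepsilon)$-quasiregular $\vol_{(\R^n)^k}^\times$-curve is $C^{1,\alpha}_{\loc}$, ``by Theorem~\ref{thm:Hardt-Lin}.'' But Theorem~\ref{thm:Hardt-Lin} applies only to $\omega$-\emph{calibrated} curves, that is, to the case $K=1$ where $\norm{DF}^n = \star F^*\omega$ holds exactly; this equality is what makes $F$ a genuine local $n$-\emph{minimizer} and hence $C^{1,\alpha}$ by Hardt--Lin. A $(1+\varepsilon)$-quasiregular curve with $\varepsilon>0$ is only a local quasi-minimizer, and quasi-minimizers are merely $C^{\alpha}$-H\"older; quasiconformal local embeddings between domains of $\R^n$ are a familiar reminder that small distortion does not buy $C^1$ regularity. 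This misapplication is not cosmetic: it is the load-bearing ingredient for nearly everything you do afterward — the pointwise distortion inequality at the vertices $v$ of a simplex, the modulus of continuity $\omega_{DF}$, the finite-element interpolation bound $\norm{L_\sigma - DF(v)} \le C(n)\,\omega_{DF}(h)$, and the ``uniform $C^{1,\alpha}$-estimates'' invoked in your rescaling argument for the non-vanishing of $DF$. None of these are available when $DF$ is merely an $L^n$ function.

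The paper's proof is designed precisely to avoid any such regularity assumption on $DF$. Theorem~\ref{thm:4} is obtained from Theorem~\ref{thm:main-approx}, whose per-simplex analysis (Lemma~\ref{lem:pl-approx-model} and Proposition~\ref{prop:pa-stand-simplex}) never touches $DF$ pointwise. Instead, a blown-up copy of $F$, normalised so that $F(0)=0$ and $\sup_{B^n}|F|=1$, is shown to be $C^0$-close — on the scale controlled by the gauge $\tau^{n,k}$ via Proposition~\ref{prop:1-qr-near} — to a $\vol^\times$-calibrated curve $\varphi$, which by Theorem~\ref{thm:Euclidean-Liouville} is essentially a M\"obius transformation. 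The linear map $P$ through the simplex vertices is then compared not to $DF$ but to $D\varphi(0)$, and the second-order M\"obius estimate (Lemma~\ref{lem:mob-second-order}) plus Lemma~\ref{lem:conformal-vertices-distortion} close the argument. So the distortion of the interpolant is controlled using only $C^0$ information about $F$ and the smoothness of the \emph{model} M\"obius map, not of $F$ itself. If you want to salvage your approach you would need to replace ``compare $L_\sigma$ with $DF(v)$'' by ``compare $L_\sigma$ with the derivative at the centre of a nearby M\"obius model,'' which is exactly what the paper does; the non-vanishing of $DF$ then never arises as a question, because the lower bounds come from the M\"obius model and the Harnack inequality (Proposition~\ref{prop-iwaniecstabilitylemma3}), not from pointwise values of $DF$.
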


Having this piecewise linear approximation theorem at our disposal, we are ready to prove a Euclidean version of Theorem \ref{thm:stability-qr-curves}. The finiteness of the set $G(\vol_{(\R^n)^k}^\times)$ of maximal planes of $\vol_{(\R^n)^k}^\times$ together with Theorem \ref{thm:4} now yields that quasiregular $\vol_{(\R^n)^k}^\times$-curves of small distortion are carried by quasiregular coordinate mappings. 
From this Euclidean result, we obtain a quantitative version of Theorem \ref{thm:stability-main}, which in turn yields Theorem \ref{thm:manifold-Liouville}.



\section{Failure of discreteness of quasiregular $\vol_{(\R^n)^k}^\times$-curves of large distortion}
\label{sec:IVV}

In this section, we construct, for each $n \ge 3$, an example of a non-constant $\vol_{(\R^n)^k}^\times$-curve which is constant in the lower half-space. 

\begin{theorem}
\label{thm:IVV}
For each $n\ge 3$, there exists $k\in \N$ and a non-constant quasiregular $\vol_{(\R^n)^k}^\times$-curve $F\colon \R^n \to (\R^n)^k$ which is constant in the lower half-space $\R^{n-1}\times (-\infty,0]$.
\end{theorem}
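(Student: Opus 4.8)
The plan is to obtain the curve by an iterative construction modelled on the two-dimensional example of Iwaniec, Verchota and Vogel \cite{Iwaniec-Verchota-Vogel-ARMA-2002}. I would first reduce to a slab: it suffices to produce a locally Lipschitz map $F=(f_1,\ldots,f_k)\colon\R^n\to(\R^n)^k$ which is constant outside the open slab $S=\{x\in\R^n\colon 0<x_n<1\}$, is non-constant, and satisfies the distortion inequality $\|DF\|^n\le K\sum_{i=1}^{k}J_{f_i}$ almost everywhere on $S$ for some finite $K$. Since $DF=0$ almost everywhere on $\R^n\setminus\overline S$, the inequality \eqref{eq:QRC} then holds a.e.\ on $\R^n$, so $F$ is a $K$-quasiregular $\vol_{(\R^n)^k}^\times$-curve, non-constant and constant on the lower half-space $\R^{n-1}\times(-\infty,0]$. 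Before constructing $F$ I would record why the obvious attempts fail for $n\ge 3$: a separable ansatz $f_i(x',t)=\rho(t)h_i(x')$ with a cut-off $\rho$ vanishing for $t\le 0$ yields $\sum_i J_{f_i}\asymp \rho(t)^{n-1}\rho'(t)$ while $\|DF\|\asymp\max(\rho(t),|\rho'(t)|)$, and these cannot be made comparable as $t\to 0^+$ by any continuous $\rho$ with $\rho(0)=0$; and a curve that factors through a proper subspace of $\R^n$ has all coordinate Jacobians equal to zero, hence is constant. So $F$ must genuinely oscillate on every scale as $x_n\to 0^+$, becoming asymptotically independent of $x_1,\ldots,x_{n-1}$ only in the limit — precisely the mechanism used in the planar IVV example.

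I would build $F$ as a locally uniform limit of locally Lipschitz $\vol_{(\R^n)^k}^\times$-curves $F_j$ of a fixed distortion $K_0=K_0(n)$. The induction step replaces $F_j$, on a dyadically thin slab $\{0<x_n<2^{-j}\}$, by a rescaled copy of a fixed \emph{funnel module}: a locally Lipschitz $K_0$-quasiregular $\vol_{(\R^n)^k}^\times$-curve on a slab that agrees with a prescribed affine map $A$ near its top face and with an affine map $A'$ satisfying $\|A'\|\le\tfrac12\|A\|$ near its bottom face. Rescaling so that the affine data contract geometrically from one scale to the next, the $F_j$ are uniformly locally Lipschitz and form a uniformly Cauchy sequence. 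Their limit $F$ is continuous, lies in $W^{1,n}_{\loc}(\R^n,(\R^n)^k)$, is non-constant near the top of $S$, is constant on $\{x_n\le 0\}$, and — because the defining relation \eqref{eq:QRC} for quasiregular curves is $C^0$-closed \cite[Theorem~1.9]{Pankka-AASF-2020} — is a $K_0$-quasiregular $\vol_{(\R^n)^k}^\times$-curve. This gives Theorem~\ref{thm:IVV} with $k=k(n)$ and $K=K_0(n)$.

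The construction of the funnel module is the heart of the matter, and is where the freedom in $k$ and in the size of the distortion is essential. A single affine map cannot be funneled to a strictly smaller one across a thin slab without an arbitrarily large normal derivative, hence arbitrarily large distortion; following the IVV scheme, one realises the module instead as a staircase laminate — on each scale finitely many affine pieces, each of bounded distortion, whose gradients stay inside a suitable rank-one-convex hull of the relation \eqref{eq:QRC} while their mean contracts by the required factor. The point to verify is that, for $K$ large enough, the constraint set
\[
\left\{(A_1,\ldots,A_k)\in(\R^{n\times n})^k\colon \big\|(A_1,\ldots,A_k)\big\|^n\le K\sum_{i=1}^{k}\det A_i\right\}
\]
has a hull rich enough to join $A$ to $A'$ by rank-one segments; taking $k=k(n)$ large lets the determinants $\det A_i$ take both signs while keeping $\sum_i\det A_i$ positive and comparable to $\|(A_1,\ldots,A_k)\|^n$, which is exactly what makes such segments available (in two dimensions this hull estimate is essentially the content of \cite{Iwaniec-Verchota-Vogel-ARMA-2002}, and the IVV map itself can serve as the basic planar gadget inside each module). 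The main obstacle is therefore this laminate/hull estimate in dimension $n\ge 3$, together with the book-keeping that keeps the distortion constant $K_0$ uniform across all scales of the iteration; once the affine data decay geometrically, the continuity and $W^{1,n}_{\loc}$-regularity of the limit are routine.
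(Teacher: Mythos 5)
Your high-level scheme is the right one, and it matches the paper's outline: build $F$ by gluing rescaled copies of a single transition map across dyadic slabs $\{2^{-\ell}\le x_n\le 2^{-\ell+1}\}$, extending by a constant for $x_n\le 0$, and use the freedom in $k$ so that one coordinate with negative Jacobian can be compensated by many coordinates with positive Jacobian. You also correctly identify the key structural point, namely that the constraint $\|DF\|^n\le K\sum_i J_{f_i}$ permits individual Jacobians $J_{f_i}$ of either sign, so long as the sum stays bounded below by a positive multiple of $\|DF\|^n$.

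However, there is a genuine gap at the central step, and you have put your finger on it yourself: you defer the construction of the ``funnel module'' to an unproven ``laminate/hull estimate'' in dimension $n\ge 3$, and remark that the planar IVV map ``can serve as the basic planar gadget.'' That does not go through: IVV is a $2$-dimensional construction, and nothing in your sketch produces the module for $n\ge 3$. The paper's Lemma~\ref{lemma:IVV} solves exactly this problem, and it does so not via a rank-one convex hull argument but by an explicit, elementary construction that your sketch does not reach. The key device you are missing is the angular part $A\colon\R^{n-1}\to\bS^{n-1}$ of the Zorich map, an $L$-Lipschitz orientation-preserving branched cover with $J_A\ge L^{1-n}$ a.e. From $A$ the paper builds the ``good'' map $s(x,t)=tA(x)$, which is Lipschitz with a uniform positive lower bound on $\star s^*\vol_{\R^n}$ on the slab $\R^{n-1}\times[1/2,1]$. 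The ``bad'' piece is a straight-line interpolation $h$ between $2A(x)$ and $A(2x)$; it is merely $10L$-Lipschitz, so its Jacobian is bounded below by $-(10L)^n$. Choosing $k$ so that $(k-1)(2L)^{1-n}-(10L)^n\ge 1$ makes the sum $\sum_i J_{h_i}\ge 1$ even though one coordinate has bad sign. The module $H=(h_1,\dots,h_k)$ is a staircase over $k$ dyadic sublevels where exactly one coordinate does the transition $h$ at each level while the others do $s$; the boundary data $g_i(x,1)=A(x)$ and $g_i(x,1/2)=2^{-k}A(2x)$ then match across scales under the dyadic self-similar rescaling, which is what lets the outer induction close. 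The self-similarity uses the equivariance built into $A$ (so that $A(2x)$ is again legitimate boundary data of the same type), which is a feature of the Zorich angular map that an abstract affine ``funnel'' does not have.

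So the comparison is: your proposal is morally aligned with the paper but leaves the core technical construction as an open hull/laminate problem, whereas the paper bypasses rank-one convexity entirely with the Zorich angular map and a two-line Jacobian budget. To repair your proposal you would need either to actually prove the laminate estimate you invoke for $n\ge 3$ (nontrivial, and not needed) or to replace the ``funnel module'' with a concrete map such as the paper's $H$ built from $s$ and $h$. Two secondary points are also worth flagging: (i) your Cauchy-sequence argument for passing to the limit is unnecessary if the module boundary data match exactly across scales, since then $F$ is defined by an explicit formula rather than as a limit; and (ii) the $C^0$-closedness of the quasiregular-curve relation is a legitimate tool but the paper does not need it here, precisely because the gluing is exact.
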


Given $n\ge 3$, we fix an orientation preserving $L$-Lipschitz branched cover $A\colon \R^{n-1}\to \bS^{n-1}$ satisfying $J_A\ge (1/L)^{n-1}$ almost everywhere, where $L=L(n)\ge 1$; see e.g.~\cite{Drasin-Pankka} for more details on the construction of $A$ as part of the construction of the Zorich map $Z\colon \R^n \to \R^n$, $(x,t)\mapsto e^t A(x)$. Note that $A$ is, in fact, quasiregular; see Martio and V\"ais\"al\"a \cite{Martio-Vaisala-MathAnn-1988}.

We may now formulate the crux of the proof of Theorem \ref{thm:IVV} as a lemma. Here, and in what follows, we say that a map $G\colon \overline{\Omega}\to N$ into a manifold $N$ is a $K$-quasiregular $\omega$-curve if $G|_{\Omega}\colon \Omega\to N$ is a $K$-quasiregular $\omega$-curve.

\begin{lemma}
\label{lemma:IVV}
Let $n\ge 3$. Then there exists $k\in \N$ and a Lipschitz map $G=(g_1,\ldots, g_k)\colon \R^{n-1}\times [1/2,1]\to (\R^n)^k$, which is a quasiregular $\vol_{(\R^n)^k}^\times$-curve, satisfying $|g_i(x,t)|\le 1$,  $g_i(x,1)=A(x)$ and $g_i(x,1/2) = (1/2)^k A(2x)$ for every $x\in \R^{n-1}$, $t\in [1/2,1]$, and $i\in \{1,\ldots, k\}$.
\end{lemma}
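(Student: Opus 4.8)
The plan is to reduce the lemma to the construction of a single genuinely quasiregular ``Zorich collar'' in one $\R^n$-factor, and then to take $G$ to be $k$ identical copies of that collar. Concretely, let $\Lambda\subset\R^{n-1}$ be a lattice with respect to which the map $A$ of the Zorich construction is periodic; note that $A(2\,\cdot\,)$ is then also $\Lambda$-periodic. First I would produce, for some $k=k(n)\in\N$, a $\Lambda$-periodic Lipschitz map $g\colon\R^{n-1}\times[1/2,1]\to\R^n$ which is $K_0$-quasiregular with $K_0=K_0(n)$, satisfies $|g(x,t)|\le 1$ for all $(x,t)$, and has boundary values $g(x,1)=A(x)$ and $g(x,1/2)=(1/2)^kA(2x)$ for all $x\in\R^{n-1}$. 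Granting such a $g$, set $G=(g,\ldots,g)\colon\R^{n-1}\times[1/2,1]\to(\R^n)^k$. Then $G$ is Lipschitz, $|g_i|=|g|\le1$, and the required boundary values hold in each coordinate. Since all coordinates of $G$ coincide, $\norm{DG(v)}^2=k\,|Dg(v)|^2$ for every tangent vector $v$, so $\norm{DG}^n=k^{n/2}\norm{Dg}^n$, whereas $\star G^*\vol_{(\R^n)^k}^\times=\sum_{i=1}^kJ_{g_i}=kJ_g$. Using the quasiregularity of $g$, namely $\norm{Dg}^n\le K_0J_g$ almost everywhere, we obtain
\[
\norm{DG}^n=k^{n/2}\norm{Dg}^n\le k^{n/2}K_0J_g=k^{n/2-1}K_0\bigl(\star G^*\vol_{(\R^n)^k}^\times\bigr)
\]
almost everywhere, and since $\norm{\vol_{(\R^n)^k}^\times}\equiv1$ this says precisely that $G$ is a $\bigl(k^{n/2-1}K_0\bigr)$-quasiregular $\vol_{(\R^n)^k}^\times$-curve. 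Thus everything reduces to the construction of $g$.

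For $g$, the model to keep in mind is the Zorich map $Z(x,t)=\rho(t)A(x)$: with $\rho$ increasing it covers the sphere of radius $\rho(t)$ once per $\Lambda$-period and is orientation preserving of bounded distortion, its Jacobian being $\rho^{n-1}\rho'J_A>0$ for the orientation in which $A$ is orientation preserving onto $\bS^{n-1}$. The map $g$ must additionally pass from the covering $A$ (degree $\deg A$ per $\Lambda$-period) at radius $1$ and height $t=1$ to the covering $(1/2)^kA(2\,\cdot\,)$ (degree $2^{n-1}\deg A$ per $\Lambda$-period) at radius $(1/2)^k$ and height $t=1/2$. I would build it by combining a radial amplitude factor sweeping the spherical shell $\{(1/2)^k\le|y|\le1\}$ as $t$ runs over $[1/2,1]$ with a finite chain of the elementary branching ``bricks'' used to build Zorich-type mappings, as in Drasin and Pankka \cite{Drasin-Pankka} (cf.\ Rickman \cite{Rickman-book}): each such brick raises the covering degree by a bounded, dimensionally controlled amount, uses a definite amount of radial room, is Lipschitz and orientation preserving, has distortion bounded by a constant depending only on $n$, and can be placed inside a prescribed spherical shell. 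Chaining finitely many of them realises the $2^{n-1}$-fold unfolding of $A$ into $A(2\,\cdot\,)$; this forces $k$ to be taken large, since the shell $\{(1/2)^k\le|y|\le1\}$ must contain enough radial room to fit the whole chain while the distortion stays bounded by $K_0(n)$.

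The point where the real work lies — and the main obstacle — is compatibility: $A$ and $A(2\,\cdot\,)$ are $\Lambda$-periodic but the unfolding is not ``local'', so the amplitude factor and the branching bricks must be arranged $\Lambda$-periodically (equivalently, on one $\Lambda$-fundamental domain with the correct translation matching on the vertical boundary), and the new branch sheets created in the interior must be inserted symmetrically, so that this periodicity, the bound $|g|\le1$, and orientation preservation $J_g>0$ a.e.\ all persist throughout $[1/2,1]$. Near $t=1$ and $t=1/2$ the map $g$ degenerates to the radially scaled sphere maps $\rho(t)A(\,\cdot\,)$ and $\rho(t)A(2\,\cdot\,)$ with $\rho$ increasing, whose Jacobians $\rho^{n-1}\rho'J_A$ are positive and match the interior bricks, and the uniform distortion bound for the whole collar is then the standard estimate for Zorich-type bricks. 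Once $g$ is constructed, the verification that $G=(g,\ldots,g)$ has all the asserted properties is the short computation above.
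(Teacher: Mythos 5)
The reduction to a single sense-preserving collar $g$ is where the proposal breaks, and the obstruction is precisely the reason the paper's construction cannot be made with a single quasiregular coordinate. Quasiregularity $\norm{Dg}^n\le K_0 J_g$ forces $J_g\ge 0$ almost everywhere, so a non-constant $g$ is sense-preserving; in particular, for the $\Lambda$-periodic map you propose, the induced map $\bar g$ on the compact cylinder $W=T^{n-1}\times[1/2,1]$ has local index $\ge 1$ at every point and the degree function $y\mapsto\deg(\bar g,W,y)$ is locally constant on $\R^n\setminus\bar g(\partial W)$, nonnegative, and $0$ for $|y|>1$. But the two boundary restrictions are coverings of spheres of different degrees: $A$ has some degree $d>0$ per $\Lambda$-period onto $\bS^{n-1}$, while $A(2\,\cdot\,)$ has degree $2^{n-1}d$ per $\Lambda$-period. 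Hence $\bar g_*[\partial W]=d\,[\bS^{n-1}]-2^{n-1}d\,[(1/2)^k\bS^{n-1}]$, so the degree equals $d$ on the annulus $(1/2)^k<|y|<1$ and equals $d-2^{n-1}d=-(2^{n-1}-1)d<0$ on the ball $|y|<(1/2)^k$ for $n\ge 3$. This contradicts nonnegativity of the degree. No arrangement of Zorich-type branching bricks can evade this; it is a purely topological obstruction, and the same contradiction appears in the non-periodic case by working on boxes $[-R,R]^{n-1}\times[1/2,1]$ and letting $R\to\infty$, since the side-wall contributions to the degree are only $O(R^{n-2})$.

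The paper's construction is designed around exactly this obstruction, and this is what you have missed: it never produces a quasiregular coordinate map. Each $h_i$ of $H$ satisfies both boundary conditions and therefore, by the argument above, must have $J_{h_i}<0$ somewhere. The mechanism is that in the dyadic strip $\R^{n-1}\times[(1/2)^\ell,(1/2)^{\ell-1}]$ only the single coordinate $h_\ell$ performs the degree-changing (hence necessarily orientation-reversing) unfolding $h$, while the other $k-1$ coordinates do the harmless radial map $s(x,t)=tA(x)$ (or $s(2x,t)$) whose Jacobian is bounded below by a positive constant. The integer $k$ is then chosen large via \eqref{eq:choice-of-k} so that the $(k-1)$ positive Jacobians dominate the single negative one in $\star H^*\vol_{(\R^n)^k}^\times=\sum_i \star h_i^*\vol_{\R^n}$. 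The whole point of the product target is that positivity is only required of the \emph{sum} of Jacobians; taking all coordinates equal, as you propose, puts the constraint back on a single Jacobian and reinstates the obstruction. (Your distortion computation for $G=(g,\dots,g)$ — namely $\norm{DG}^n=k^{n/2}\norm{Dg}^n$ and $\star G^*\vol_{(\R^n)^k}^\times=kJ_g$ — is correct; it is the hypothesized $g$ that cannot exist.)
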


\begin{proof}[Proof of Theorem \ref{thm:IVV} assuming Lemma \ref{lemma:IVV}]
Let $n\ge 3$ and let $k\in \N$ and  $G \colon \R^{n-1}\times [1/2,1]\to (\R^n)^k$ be as in Lemma \ref{lemma:IVV}. Let $F=(f_1,\ldots, f_k)\colon \R^n \to (\R^n)^k$ be the map
\[
(x,t) \mapsto \left\{ \begin{array}{rl}
\left( \frac{1}{2^k}\right)^{\ell-1} G(2^{\ell-1} x,2^{\ell-1} t), & t \in [(1/2)^{\ell}, (1/2)^{\ell-1}],\ \ell\in \Z, \\
0, & t \le 0.
\end{array}\right.
\]

The map $F$ is continuous between the strips by the boundary properties of $g_i$ in Lemma \ref{lemma:IVV}.  The map $F$ is continuous at $t=0$ since $A$ and hence $G$ is uniformly bounded by $1$ and so as $t$ tends to $0$, $|F|$ is bounded by $2\sqrt{k}t$.

Let $\zeta \colon \R^{n-1}\times [0,\infty) \to \R^{n-1}\times [0,\infty)$ be the map $(x,t)\mapsto (2^{\ell-1} x, 2^{\ell-1} t)$ and $\sigma \colon (\R^n)^k \to (\R^n)^k$ be the map $y\mapsto y/(2^k)^{\ell-1}$. Since $\zeta$ and $\sigma$ are both conformal and 
\[
\sigma^*\vol_{(\R^n)^k}^\times = (2^{-(\ell-1)k})^n\vol_{(\R^n)^k}^\times = \norm{D\sigma}^n \vol_{(\R^n)^k}^\times,
\]
we have that 
\[
\norm{DF}^n = \norm{D\sigma}^n \norm{DG}^n \norm{D\zeta}^n
\]
and that
\[
\star F^*\vol_{(\R^n)^k}^\times 
= \zeta^* G^* \sigma^* \vol_{(\R^n)^k}^\times 
= \norm{D\sigma}^n \norm{D\zeta}^n G^* \vol_{(\R^n)^k}^\times.
\]
Thus $F$ is a $K$-quasiregular $\vol_{(\R^n)^k}^\times$-curve.

Since $F$ is constant in the lower half-space $\R^n_-$, the map $F$ is the desired curve.
\end{proof}

\begin{proof}[Proof of Lemma \ref{lemma:IVV}]
We define first an auxiliary mapping $s\colon \R^{n-1}\times [0,1] \to \R^n$ by $(x,t) \mapsto t A(x)$. Then $s$ is a $2L$-Lipschitz mapping satisfying
\[
\frac{\partial s}{\partial x_i}(x,t) = t \frac{\partial A}{\partial x_i}(x) \quad \text{and} \quad
\frac{\partial s}{\partial t}(x,t) = A(x)
\]
almost everywhere in $\R^{n-1}\times [0,1]$. Since
\[
\left\langle  \frac{\partial A}{\partial x_i}(x), A(x) \right\rangle = 0
\]
for almost every $x\in \R^{n-1}$, we have that
\begin{align*}
\star (s^*\vol_{\R^n})_{(x,t)} &= \vol_{\R^n}\left( \frac{\partial s}{\partial x_1}(x,t), \ldots, \frac{\partial s}{\partial x_{n-1}}(x,t), \frac{\partial s}{\partial t}(x,t)\right) \\
&= \vol_{\R^n} \left( t \frac{\partial A}{\partial x_1}(x), \ldots, t \frac{\partial A}{\partial x_{n-1}}(x),  A(x)\right) \\
&= t^{n-1} \left( \star A^*\vol_{\bS^{n-1}} \right)_x \ge \left( \frac{t}{L}\right)^{n-1} \ge \left( \frac{1}{2L}\right)^{n-1}
\end{align*}
for almost every $(x,t) \in \R^{n-1}\times [1/2,1]$.

Let now $h \colon \R^{n-1}\times [1/2,1]\to \R^n$ be the mapping
\[
(x,t)\mapsto \left(\frac{t-1/2}{1-1/2}\right)2A(x) + \left(1- \frac{t-1/2}{1-1/2}\right)A(2x).
\]
Then $h$ is $10L$-Lipschitz and, in particular, 
\[
\star h^*\vol_{\R^n} \ge -(10L)^n.
\]

Let now $k\in \N$ be an integer for which
\begin{equation}
\label{eq:choice-of-k}
(k-1) \left( \frac{1}{2L}\right)^{n-1} - (10L)^n \ge 1. 
\end{equation}

We define now a mapping $H=(h_1,\ldots, h_k)\colon \R^{n-1} \times [(1/2)^k,1] \to (\R^n)^k$ componentwise. For each $i\in \{1,\ldots, k\}$, let $h_i \colon \R^{n-1} \times [(1/2)^k,1] \to \R^n$ be the mapping defined in $\R^{n-1}\times  [(1/2)^\ell, (1/2)^{\ell-1}]$ for $\ell\in \{1,\ldots, k\}$ by the formula
\[
h_i(x,t) = \left\{ \begin{array}{rl}
s(2x,t), & \text{for }i < \ell, \\
\frac{1}{2^\ell} h(x, 2^{\ell-1} t), & \text{for } i = \ell, \\
s(x,t), & \text{for } i>\ell,
\end{array}\right.
\]
where $(x,t)\in \R^{n-1}\times  [(1/2)^\ell, (1/2)^{\ell-1}]$. 
Note that, for every $x\in \R^{n-1}$, we have that 
\[
s(2x,2^{-\ell}) = \frac{1}{2^\ell}A(2x) = \frac{1}{2^\ell} h(x,1/2) = h_\ell(x,2^{-\ell}).
\]
and that
\[
h_\ell(x,2^{-(\ell-1)}) = \frac{1}{2^\ell}h(x,1) = \frac{1}{2^\ell}\cdot 2 A(x) = s(x,2^{-(\ell-1)}).
\]
Thus each $h_i$ is continuous. Furthermore, we have that 
\[
h_i(x,1)=A(x) \quad \text{and} \quad h_i(x,1/2^k) = \frac{1}{2^k} A(2 x).
\]

Since the coordinate mappings of $H$ are $10L$-Lipschitz, the mapping $H$ is $10L\sqrt{k}$-Lipschitz. By \eqref{eq:choice-of-k}, we have in the strip $\R^{n-1}\times [1/2^\ell, 1/2^{\ell-1}]$ that
\begin{align*}
\star H^*\vol_{(\R^n)^k}^\times = \sum_{i=1}^k \star h_i^* \vol_{\R^n} 
&=(k-1) (\star s^*\vol_{\R^n}) + (\star h_\ell^*\vol_{\R^n}) \\
&\ge (k-1) (\star s^*\vol_{\R^n}) - \frac{2^{\ell-1}}{(2^\ell)^n} (10L)^n\\
&\ge (k-1)\left( \frac{1}{2L}\right)^{n-1} - (10L)^n \ge 1
\end{align*}
almost everywhere in $\R^{n-1}\times [(1/2)^k, 1]$. Thus
\[
\norm{DH}^n \le (10L\sqrt{k})^n \left( \star H^*\vol_{(\R^n)^k}^\times\right)
\]
almost everywhere in $\R^{n-1}\times [(1/2)^k,1]$.

Let now $\xi \colon \R^{n-1}\times [1/2, 1]\to \R^{n-1}\times [(1/2)^k,1]$ be a Lipschitz map defined by 
\[
(x,t)\mapsto \left(x,\frac{t-1/2}{1-1/2} + \left( 1-\frac{t-1/2}{1-1/2} \right) (1/2^k)\right).
\]

We define now  $G = (g_1,\ldots, g_k) \colon \R^{n-1}\times [1/2,1] \to (\R^n)^k$ by $G=H\circ \xi$. Since $\xi$ is both Lipschitz and quasiconformal, we have that $G$ is a Lipschitz map having the property that $G|_{\R^{n-1}\times (1/2,1)}$ is a $K$-quasiregular $\vol_{(\R^n)^k}^\times$-curve, for $K=K(n)\ge 1$,
\[
g_i(x,1) = h_i(x,1) = A(x),
\]
and
\[
g_i(x,1/2) = h_i(x,1/2^k) = \frac{1}{2^k}A(2x)
\]
for each $x\in \R^{n-1}$ and $i\in \{1,\ldots, k\}$. This concludes the proof of the lemma.
\end{proof}


\section{Failure of discreteness of quasiregular $\omega_\sym$-curves of small distortion}
\label{sec:Rosay}

In this section, we show that Rosay's map gives an example of quasiregular $\omega_\sym$-curves of small distortion that are not discrete.

\begin{theorem}[Rosay {\cite[Proposition 1.2]{Rosay-ANF-2010}}]
\label{thm:Rosay}
For each $K>1$ and $k\ge 2$, there exists a non-constant $K$-quasiregular $\omega_\sym$-curve $\C\to \C^k$, which is not discrete.
\end{theorem}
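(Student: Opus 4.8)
The plan is to read Theorem~\ref{thm:Rosay} off from Rosay's construction in \cite[Proposition 1.2]{Rosay-ANF-2010}. First I would recall Rosay's map $F_0=(f_1,f_2)\colon\C\to\C^2$: it is continuous and locally Lipschitz (hence in $W^{1,2}_\loc(\C,\C^2)$), non-constant, not discrete, and it satisfies
\[
|\partial_{\bar z}F_0|\le\frac{K-1}{K+1}\,|\partial_z F_0|
\]
almost everywhere. The first step is then to observe that this Beltrami-type estimate is exactly the $K$-quasiregular $\omega_\sym$-curve inequality \eqref{eq:QRC} for $F_0$ in the case $k=2$; the second step is to promote the example to arbitrary $k\ge 2$ by adjoining constant coordinate functions, which changes neither $\norm{DF}$, nor $\star F^*\omega_\sym$, nor the fibres of the map.

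\textbf{Translating the Beltrami estimate.} For the first step I would use the elementary identities available for any $F=(f_1,\dots,f_k)\in W^{1,2}_\loc(\C,\C^k)$. Since $F^*\omega_\sym=\bigl(\sum_{j=1}^k J_{f_j}\bigr)\,dx\wedge dy$, one has
\[
\star F^*\omega_\sym=\sum_{j=1}^k\bigl(|\partial_z f_j|^2-|\partial_{\bar z}f_j|^2\bigr)=|\partial_z F|^2-|\partial_{\bar z}F|^2
\]
almost everywhere, where $|\partial_z F|^2=\sum_j|\partial_z f_j|^2$ and similarly for $\partial_{\bar z}F$; and since $DF(v)=(\partial_z F)v+(\partial_{\bar z}F)\bar v$ for $v\in\C$, one has $\norm{DF}\le|\partial_z F|+|\partial_{\bar z}F|$ almost everywhere. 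As $\omega_\sym$ is a calibration, $\norm{\omega_\sym}\equiv 1$, so
\[
(\norm{\omega_\sym}\circ F)\,\norm{DF}^2\le\bigl(|\partial_z F|+|\partial_{\bar z}F|\bigr)^2,
\]
and a one-line computation shows that $\bigl(|\partial_z F|+|\partial_{\bar z}F|\bigr)^2\le K\bigl(|\partial_z F|^2-|\partial_{\bar z}F|^2\bigr)=K\,\star F^*\omega_\sym$ precisely when $|\partial_{\bar z}F|\le\tfrac{K-1}{K+1}|\partial_z F|$ (both sides vanishing where $\partial_z F=0$). Applying this to $F_0$ shows that Rosay's map is a non-constant $K$-quasiregular $\omega_\sym$-curve $\C\to\C^2$ in the sense of \eqref{eq:QRC}.

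\textbf{From $\C^2$ to $\C^k$, and the obstacle.} Given $k\ge 2$, I would fix constants $c_3,\dots,c_k\in\C$ and set $F=(f_1,f_2,c_3,\dots,c_k)\colon\C\to\C^k$. Then $\norm{DF}=\norm{DF_0}$ and $\star F^*\omega_\sym=\star F_0^*\omega_\sym$ almost everywhere, so $F$ is again a non-constant $K$-quasiregular $\omega_\sym$-curve; and since $F^{-1}(F(w))=F_0^{-1}(F_0(w))$ for every $w\in\C$, the map $F$ is not discrete because $F_0$ is not. This proves the theorem. I do not expect a genuine obstacle: the substantive content is Rosay's construction, which is quoted, and the only points to check with care are that his map indeed has the Sobolev regularity demanded of an $\omega_\sym$-curve and that the pointwise estimate $\norm{DF}\le|\partial_z F|+|\partial_{\bar z}F|$ holds at almost every point — both routine.
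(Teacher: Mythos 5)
Your proposal is correct and takes essentially the same route as the paper: your Beltrami-to-distortion computation is precisely the paper's Lemma~\ref{lemma:complex-dilatation}, and the passage from $\C^2$ to $\C^k$ by adjoining constant coordinates is exactly what the paper does (with $c_3=\cdots=c_k=0$). Two small remarks: the Beltrami bound is not ``exactly'' the quasiregular $\omega_\sym$-curve inequality when $k>1$ --- the paper's remark after Lemma~\ref{lemma:complex-dilatation} makes this point, and only the one-directional implication, which you do prove and which is all you need, holds --- and the paper additionally reproduces Rosay's annular construction on $B^2$ together with the holomorphic/zero continuation to all of $\C$ on which the uniform Beltrami bound holds, details you absorb into the citation of \cite[Proposition 1.2]{Rosay-ANF-2010}.
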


For the reader's convenience, we begin by sketching Rosay's original construction of a non-discrete map $u=(u_1,u_2)\colon B^2\to \C^2$ satisfying $|\partial_{\bar z} u(z)|\le \varepsilon(z) |\partial u(z)|$ with $\varepsilon(z)\to 0$ as $z\to 0$.

The coordinate functions $u_i \colon B^2 \to \C$ of $u$ are defined as follows. Let $n\in \N_+$. Let 
\[
2^{-n} < r_n = \frac{5}{4} 2^{-n} < a_n = \frac{3}{2} 2^{-n} < R_n = \frac{7}{4} 2^{-n} < 2^{-n+1}
\]
and let $A_n$ be the closed annulus
\[
A_n = \{ z\in \C \colon 2^{-n} \le |z| \le 2^{-n+1} \}.
\]

Let $\psi_n \in C^\infty(\C)$ be a function satisfying $0\le \psi_n \le 1$, $\psi_n \equiv 1$ in $B^2(2^{-n})$, $\psi_n \equiv 0$ in $\C \setminus B^2(r_n)$, and $|d\psi_n|\le 6\cdot 2^n$. Similarly, let $\varphi_n \in C^\infty(\C)$ be a function satisfying $0\le \varphi_n \le 1$, $\varphi_n \equiv 1$ in $B^2(R_n)$, $\varphi_n \equiv 0$ in $\C \setminus B^2(2^{-n+1})$, and $|d\varphi_n|\le 6\cdot 2^n$. Let also $\chi_n$ be the function $1-\varphi_n$; see Figure \ref{fig:Rosay}.

\begin{figure}[h!]
\begin{tikzpicture}[thick, scale=0.7]
\draw[->] (-1,0) -- (13,0);
\draw[->] (-1,0) -- (-1,4);
\draw (-1 cm,2pt) -- (-1 cm,-2pt) node[anchor=north] {$0$};
\draw (1 cm,2pt) -- (1 cm,-2pt) node[anchor=north] {$2^{-n}$};
\draw (11 cm,2pt) -- (11 cm,-2pt) node[anchor=north] {$2^{-n+1}$};
\draw (3.5 cm,2pt) -- (3.5 cm,-2pt) node[anchor=north] {$r_n$};
\draw (6 cm,2pt) -- (6 cm,-2pt) node[anchor=north] {$a_n$};
\draw (8.5 cm,2pt) -- (8.5 cm,-2pt) node[anchor=north] {$R_n$};
\draw (-0.9 cm,3 cm) -- (-1.1 cm,3 cm) node[anchor=east] {$1$};
\draw plot [smooth] coordinates {(-1,3) (1.1,2.95) (1.7,2.5) (2.5,0.5) (3.4,0)};
\node at (2,3.2) {$\psi_n$};
\draw plot [smooth] coordinates {(13,3) (10.9,2.95) (10.3,2.5) (9.5,0.5) (8.6,0)};
\node at (10,3.2) {$\chi_n$};
\end{tikzpicture}
\caption{Functions $\psi_n$ and $\chi_n$.}
\label{fig:Rosay}
\end{figure}
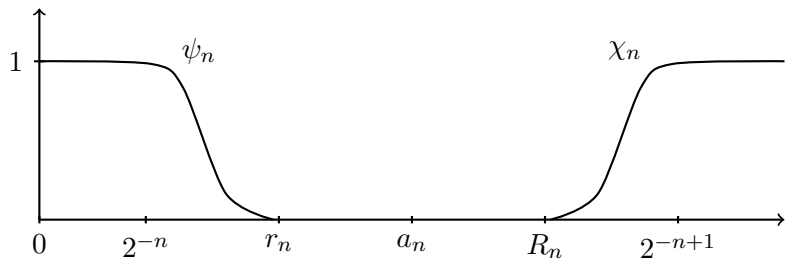

For $n$ even, we set
\[
u_1(z) = 2^\frac{n^2}{2} z^{n-1} (z-a_n)
\]
and
\[
u_2(z) = \chi_n(z) 2^\frac{(n-1)^2}{2} z^{n-2} (z-a_{n-1}) + \psi_n(z) 2^\frac{(n+1)^2}{2} z^n (z-a_{n+1})
\]
for every $z\in A_n$.

For $n$ odd, we set
\[
u_1(z) = \chi_n(z) 2^\frac{(n-1)^2}{2} z^{n-2} (z-a_{n-1}) + \psi_n(z) 2^\frac{(n+1)^2}{2} z^n (z-a_{n+1})
\]
and
\[
u_2(z) = 2^\frac{n^2}{2} z^{n-1} (z-a_n)
\]
for every $z\in A_n$.

Now both functions $u_i$ are defined in $B^2 \setminus \{0\}$. To obtain functions defined in $B^2$, we set $u_1(0)=u_2(0)=0$.

Now $u=(u_1,u_2) \colon B^2 \to \C^2$ is a $C^\infty$-smooth map. For each $n$, we have that $u_1(a_n)=0$ and $u_2(a_n)=0$. Thus, the map $u$ has a non-isolated zero at the origin. Additionally, there exists a constant $C>0$ for which
\begin{equation}
\label{eq:Rosay-C}
|\partial_{\bar z} u(z)| \le \frac{C}{n} |\partial_z u(z)|
\end{equation}
for every $z\in A_n$ when $n$ is large enough. We refer to Rosay \cite[Section 4]{Rosay-ANF-2010} for the estimates.

Having Rosay's construction at our disposal, it suffices to prove the following lemma, which interprets the complex dilatation of a map $\Omega \to \C^k$ in terms of the distortion as a quasiregular $\omega_\sym$-curve.

\begin{lemma}
\label{lemma:complex-dilatation}
Let $K\ge 1$, $\Omega\subset \C$ a domain, and $F=(f_1,\ldots,f_k) \colon \Omega \to \C^k$. If
\begin{equation}
\label{eq:complex-dilatation}
|\partial_{\bar z} F| \le \frac{K-1}{K+1} |\partial_z F| \text{ a.e.~in } \Omega,
\end{equation}
then
\begin{equation}
\label{eq:omega_0-distortion}
\norm{DF}^2 \le K(\star F^* \omega_\sym) \text{ a.e.~in } \Omega.
\end{equation}
\end{lemma}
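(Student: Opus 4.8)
The plan is to unwind both sides of the desired inequality into the complex partials $\partial_z F$ and $\partial_{\bar z} F$ and then reduce \eqref{eq:omega_0-distortion} to the scalar inequality \eqref{eq:complex-dilatation} by elementary estimates. First I would fix a point $z\in \Omega$ where $F$ is differentiable and write $a=\partial_z F(z)\in \C^k$ and $b=\partial_{\bar z}F(z)\in \C^k$, viewing these as vectors in $\C^k\cong \R^{2k}$. The differential $DF(z)$, as a real-linear map $\C\to \C^k$, sends $1\mapsto a+b$ and $i\mapsto i(a-b)$; hence for a unit vector $e^{i\theta}\in \C$ we have $DF(z)e^{i\theta} = e^{i\theta}a + e^{-i\theta}b$, and therefore
\[
\norm{DF(z)}^2 = \max_{\theta} \left| e^{i\theta} a + e^{-i\theta} b\right|^2 = |a|^2 + |b|^2 + 2\left| \langle a, \bar b\rangle_{\C}\right| \le \left( |a| + |b|\right)^2,
\]
where $\langle \cdot,\cdot\rangle_\C$ denotes the Hermitian inner product on $\C^k$ and the middle equality comes from optimizing the cross term $2\,\mathrm{Re}(e^{2i\theta}\langle a,\bar b\rangle)$ over $\theta$.

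Next I would compute $\star F^*\omega_\sym$ in the same notation. Writing $f_j = u_j + i v_j$, one has $F^*\omega_\sym = \sum_j du_j\wedge dv_j$, and a short computation (the standard fact that the pullback of the symplectic form records the sum of the Jacobians of the coordinate maps) gives
\[
\star F^*\omega_\sym = \sum_{j=1}^k J_{f_j} = \sum_{j=1}^k \left( |\partial_z f_j|^2 - |\partial_{\bar z} f_j|^2\right) = |a|^2 - |b|^2.
\]
Combining the two computations, \eqref{eq:omega_0-distortion} follows once we show
\[
\left( |a| + |b| \right)^2 \le K\left( |a|^2 - |b|^2\right) = K\left(|a|+|b|\right)\left(|a|-|b|\right),
\]
i.e. (when $|a|+|b|>0$, the case $a=b=0$ being trivial) once $|a|+|b| \le K(|a|-|b|)$, equivalently $(K+1)|b| \le (K-1)|a|$, which is precisely the hypothesis \eqref{eq:complex-dilatation} since $|b| = |\partial_{\bar z}F(z)|$ and $|a| = |\partial_z F(z)|$. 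Note that \eqref{eq:complex-dilatation} forces $|a|\ge |b|$, so in particular $\star F^*\omega_\sym \ge 0$ and $F$ is indeed a quasiregular $\omega_\sym$-curve. Finally, $F\in W^{1,2}_\loc(\Omega,\C^k)$ and continuity are part of the standing hypotheses on a map to which one would apply the lemma, or follow immediately from \eqref{eq:complex-dilatation} together with $F$ being a Sobolev map.

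The only genuinely non-routine step is the operator-norm identity $\norm{DF(z)}^2 = |a|^2+|b|^2 + 2|\langle a,\bar b\rangle_\C|$ and the passage to the clean bound $(|a|+|b|)^2$; everything else is bookkeeping with complex partials. I would present that computation carefully — it is exactly the higher-codimension analogue of the classical planar formula $\norm{Df}^2 = (|\partial_z f| + |\partial_{\bar z} f|)^2$ — and then the rest of the proof is the two-line algebra above. One subtlety worth a remark: the inequality $2|\langle a,\bar b\rangle_\C|\le 2|a||b|$ used to pass from the exact norm to $(|a|+|b|)^2$ is Cauchy–Schwarz and is generally strict for $k\ge 2$, which is why a quasiregular $\omega_\sym$-curve of small distortion need not have small complex dilatation in the genuine (pointwise-proportional) sense — consistent with the discussion of Rosay's example preceding the lemma.
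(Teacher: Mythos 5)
Your argument is correct and is essentially the same as the paper's: both reduce \eqref{eq:omega_0-distortion} to the bound $\norm{DF}^2 \le (|\partial_z F| + |\partial_{\bar z}F|)^2 \le K(|\partial_z F|^2 - |\partial_{\bar z}F|^2) = K(\star F^*\omega_\sym)$, which follows from $(K+1)|\partial_{\bar z}F| \le (K-1)|\partial_z F|$ after dividing by $|\partial_z F| + |\partial_{\bar z}F|$ in the nontrivial case. The only difference is that you spell out the exact operator-norm identity (with the Cauchy--Schwarz slack responsible for the non-equivalence of the two distortion conditions when $k\ge 2$, as you rightly note), whereas the paper simply invokes the classical bound $\norm{DF}\le |\partial_z F|+|\partial_{\bar z}F|$; your closing remark about the Cauchy--Schwarz loss matches the paper's remark following the lemma.
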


\begin{remark}
It is well-known that, if $k=1$, a mapping satisfies the quasiconformality conditions \eqref{eq:complex-dilatation} and \eqref{eq:omega_0-distortion} with the same constant $K$. This is not true for $k>1$. For example, consider the map $F\colon \C \to \C^2$, $z\mapsto (z,z+\bar z)$, which is $5$-quasiregular $\omega_\sym$-curve and satisfies \eqref{eq:complex-dilatation} with constant $K=(\sqrt{2}+1)/(\sqrt{2}-1)$.
Note, however, that if $F$ satisfies \eqref{eq:omega_0-distortion} with constant $K$, then $|\partial_{\bar z} F| \le \sqrt{(2K-1)/(2K+1)} |\partial_z F|$ almost everywhere.
\end{remark}

\begin{proof}[Proof of Lemma \ref{lemma:complex-dilatation}]
Let $z\in \Omega$ be a point for which \eqref{eq:complex-dilatation} holds. We may assume that $|Df(z)|\ne 0$. Then
\[
(K+1) |\partial_{\bar z} F(z)|\le (K-1)|\partial_z F(z)|
\]
and hence 
\[
|\partial_z F(z)| + |\partial_{\bar z} F(z)| \le 
K(|\partial_z F(z)| - |\partial_{\bar z} F(z)|) = 
K\frac{|\partial_z F(z)|^2 - |\partial_{\bar z} F(z)|^2}{|\partial_z F(z)| + |\partial_{\bar z} F(z)|}.
\]
Thus we have that
\begin{align*}
\norm{DF(z)}^2 &\le (|\partial_z F(z)| + |\partial_{\bar z} F(z)|)^2 \le K(|\partial_z F(z)|^2 - |\partial_{\bar z} F(z)|^2) \\
&= K \sum_{i=1}^k |\partial_z f_i(z)|^2 - |\partial_{\bar z} f_i(z)|^2 
= K (\star F^* \omega_\sym).
\end{align*}
The claim follows.
\end{proof}

\subsection*{Proof of Theorem \ref{thm:Rosay}}

Let $K>1$. Let $C$ be the constant in \eqref{eq:Rosay-C} and let $n_0$ be such that
\[
\frac{C}{n_0} \le \frac{K-1}{K+1}.
\]
We may assume that Rosay's inequality holds for $n\ge n_0$ for $n_0$ even.

Now let $F\colon \C \to \C^k$ be the map $F=(f_1,f_2,0,\ldots,0)$, where $f_1 \colon \C \to \C$ is the function
\[
z\mapsto \begin{cases}
u_1(z), & \text{ if } |z|\le a_{n_0}, \\
2^\frac{n_0^2}{2} z^{n_0-1} (z-a_{n_0}), & \text{ otherwise},
\end{cases}
\]
and $f_2 \colon \C \to \C$ is the function
\[
z\mapsto \begin{cases}
u_2(z), & \text{ if } |z|\le a_{n_0}, \\
0, & \text{ otherwise}.
\end{cases}
\]
Note that $F|_{B^2(a_{n_0})} = (u|_{B^2(a_{n_0})},0,\ldots, 0) \colon B^2(a_{n_0}) \to \C^k$, where $u\colon B^2\to \C^2$ is Rosay's map.

By Rosay's construction, the map $F$ is smooth and the preimage $F^{-1}(0)$ is not discrete. It remains to show that $F$ is a $K$-quasiregular $\omega_\sym$-curve.

Let $z\in \C$. If $|z|>a_{n_0}$, then $F$ is analytic at $z$ and hence $\partial_{\bar z} F(z)=0$. Now suppose that $|z|\le a_{n_0}$. Then $z\in A_n$ for some $n\ge n_0$ and $F=(u,0,\ldots,0)$ near $z$, where $u$ is Rosay's map. Thus
\[
|\partial_{\bar z} F| \le \frac{C}{n} |\partial_z F| \le \frac{C}{n_0} |\partial_z F| \le \frac{K-1}{K+1} |\partial_z F|
\]
in $\C$ by \eqref{eq:Rosay-C}. 
The claim follows by Lemma \ref{lemma:complex-dilatation}.


\section{Preliminaries on calibrations $\vol_{(\R^n)^k}$}
\label{sec:linear-algebra}

In this section, we recall elementary facts on the comass of $\vol_{(\R^n)^k}^\times$ and linear maps $L\colon \R^n \to (\R^n)^k$ satisfying
\[
\norm{L}^n = \star L^*\vol_{(\R^n)^k}^\times.
\]

Recall that the comass norm of $\omega\in \wedge^n V^*$ is given by
\[
\norm{\omega} = \max\{ \omega(v_1,\ldots, v_n) \colon |v_i|\le 1\} = \max_L \star L^*\omega,
\]
where the second maximum is taken over all linear maps $L\colon \R^n \to V$ satisfying $|L(e_i)|\le 1$ for $i\in \{1,\ldots, n\}$. Here, the optimal linear maps are conformal. We rephrase this fact as follows.

\begin{lemma}
\label{lemma-linearmaprigidity}
Let $\omega \in \wedge^n V$ be a calibration. Then each linear map $L \colon \R^n \to V$ satisfying
\[
\norm{L}^n = \star L^*\omega
\]
is conformal.
\end{lemma}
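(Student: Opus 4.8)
The plan is to exploit the variational characterization of the comass norm, namely that $\norm{\omega} = \max_L \star L^*\omega$ where the maximum runs over linear maps $L\colon \R^n\to V$ with $|L(e_i)|\le 1$, and to show that equality $\norm{L}^n = \star L^*\omega$ with $\norm{\omega}=1$ forces $L$ to be conformal. First I would reduce to a normalization: if $L=0$ there is nothing to prove, so assume $\norm{L}\ne 0$ and replace $L$ by $L/\norm{L}$, so that $\norm{L}=1$ and hence $\star L^*\omega = 1 = \norm{\omega}$. Thus $L$ is a norm-one linear map that \emph{achieves} the comass maximum.

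The key step is a rigidity statement for the maximizers of the functional $L\mapsto \star L^*\omega$ over the unit ball $\{\,|L(e_i)|\le 1\,\}$. Write $v_i = L(e_i)$, so $\star L^*\omega = \omega(v_1,\ldots,v_n)$ and each $|v_i|\le 1$ (in fact $\norm{L}=1$ already gives $|v_i|\le 1$). Since $\omega(v_1,\ldots,v_n)=\norm{\omega}$ is the maximal possible value, each $v_i$ must in fact have $|v_i|=1$: if some $|v_{i_0}|<1$ we could rescale $v_{i_0}\mapsto v_{i_0}/|v_{i_0}|$ and strictly increase $|\omega(v_1,\ldots,v_n)|$ (here one uses that $\omega(v_1,\ldots,v_n)\ne 0$, which holds since it equals $\norm{\omega}>0$), contradicting maximality. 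Next, the $v_i$ must be pairwise orthogonal: fixing all but $v_i$ and $v_j$, the map $w\mapsto \omega(v_1,\ldots,w,\ldots,v_j,\ldots)$ (with $w$ in the $i$-th slot) is a linear functional, nonzero at $w=v_i$, and by antisymmetry it annihilates $v_j$; maximizing this linear functional over the unit sphere subject to staying a valid competitor — equivalently, differentiating $t\mapsto \omega(\ldots, (v_i\cos t + v_j\sin t),\ldots)$ at $t=0$, which must vanish at the maximum — yields $\omega(v_1,\ldots,v_j,\ldots,v_j,\ldots)=0$ automatically, while the second-order condition (or a direct rotation argument in the plane $\mathrm{span}(v_i,v_j)$ combined with re-normalization) forces $\langle v_i,v_j\rangle=0$. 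Concretely, if $\langle v_i,v_j\rangle\ne 0$ one can rotate within $\mathrm{span}(v_i,v_j)$ to an orthonormal pair $v_i',v_j'$ with the same span and check, using that $\omega$ restricted to this plane picks up the area form scaled by $\norm{\omega}$ at the maximum, that the value strictly increases — again contradicting maximality. Hence $\{v_1,\ldots,v_n\}$ is an orthonormal system, i.e. $L$ maps the orthonormal basis $e_1,\ldots,e_n$ to an orthonormal system, which is precisely the statement that $L$ is a conformal linear map (after undoing the normalization, $L$ scales all lengths by the common factor $\norm{L}$).

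I expect the main obstacle to be making the orthogonality argument clean and coordinate-free rather than the "all $|v_i|=1$" step, which is immediate. The cleanest route is probably to argue by contradiction at the level of the full maximization problem: suppose the singular values $\sigma_1\ge\cdots\ge\sigma_n$ of $L$ are not all equal, write $L$ in a singular-value decomposition $L = \sum_i \sigma_i\, u_i\otimes e_i$ with $\{u_i\}$ orthonormal in $V$, and compute $\star L^*\omega = \big(\prod_i\sigma_i\big)\,\omega(u_1,\ldots,u_n) \le \big(\prod_i\sigma_i\big)\norm{\omega} = \prod_i\sigma_i$. On the other hand $\norm{L}^n = \sigma_1^n$, so the hypothesis gives $\sigma_1^n \le \prod_i \sigma_i \le \sigma_1^n$, forcing $\prod_i\sigma_i = \sigma_1^n$, which (since each $\sigma_i\le\sigma_1$) is possible only if $\sigma_1=\cdots=\sigma_n$. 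That is exactly conformality, and it sidesteps the rotation argument entirely — the inequality $\omega(u_1,\ldots,u_n)\le\norm{\omega}$ for an orthonormal system is just the definition of comass. I would present this SVD argument as the proof, with the earlier discussion serving only as motivation.
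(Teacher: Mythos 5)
Your final SVD argument is essentially the paper's proof: both decompose $L$ via singular values, bound $\star L^*\omega$ by $\prod_i\sigma_i$ using the comass bound $\norm{\omega}=1$, and conclude from $\norm{L}^n=\star L^*\omega$ that the singular values are all equal, i.e.\ $L$ is conformal. One small tidying point: writing $L=\sum_i\sigma_i\,u_i\otimes e_i$ presumes the right singular vectors of $L$ are exactly $e_1,\ldots,e_n$; in general you should factor $L=QDP$ with an orthogonal $P\colon\R^n\to\R^n$ on the right (as the paper does), which introduces a factor $\det P=\pm1$ so that $\star L^*\omega=(\det P)\bigl(\prod_i\sigma_i\bigr)\,\omega(u_1,\ldots,u_n)$ --- a harmless sign, since the argument only uses $|\omega(u_1,\ldots,u_n)|\le\norm{\omega}=1$.
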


\begin{proof}
By the singular value decomposition, we have that $L = Q D P$, where $Q\colon \R^n \to V$ and $P\colon \R^n \to \R^n$ are isometries, and $D \colon \R^n \to \R^n$ is a diagonal map whose diagonal elements are singular values of $L$. Then 
\[
L^*\omega = (QDP)^*\omega = P^*D^*Q^*\omega = (\det P)(\det D) Q^*\omega
\]
and 
\[
\norm{L} = \norm{QDP} = \norm{D}.
\]
Thus 
\[
\norm{D}^n = \norm{L}^n = \star L^*\omega = (\det P)(\det D)\star Q^*\omega \le (\det D) \norm{Q}^n = \det D.
\]
We conclude that $D = \norm{D} \id$ and hence $L$ is conformal.
\end{proof}

The classification of the maximal planes of $\vol_{(\R^n)^k}^\times$ is well-known. For the reader's convenience we give, in the following lemma, a simple proof which simultaneously shows that the forms $\vol_{(\R^n)^k}^\times$ are calibrations. 

\begin{lemma}
\label{lemma:linear-rigidity}
For each $n\ge 2$ and $k\ge 1$, the form $\vol_{(\R^n)^k}^\times$ is a calibration. Moreover, if a linear map $L=(L_1,\ldots, L_k)\colon \R^n \to (\R^n)^k$ satisfies
\[
\norm{L}^n = \star L^*\vol_{(\R^n)^k},
\]
then each linear map $L_i$ is conformal and, if $n\ge 3$, there exists $i_0\in \{1,\ldots, n\}$ for which $L_i=0$ for $i\ne i_0$.
\end{lemma}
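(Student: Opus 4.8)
The plan is to compute the comass norm of $\vol_{(\R^n)^k}^\times$ directly; this simultaneously yields the calibration property and, by analyzing the equality cases, the rigidity of maps $L$ with $\norm{L}^n=\star L^*\vol_{(\R^n)^k}^\times$, after first reducing to a conformal $L$ via Lemma \ref{lemma-linearmaprigidity}.

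First I would note that $\vol_{(\R^n)^k}^\times$ is closed (each $\pi_i^*\vol_{\R^n}$ is the pullback of a closed form) and non-vanishing (its value on a unit frame of the first factor is $1$), so the only substantial point is that its comass equals $1$. For this, using the identity $\norm{\omega}=\max_L \star L^*\omega$ over linear maps $L\colon\R^n\to(\R^n)^k$ with $|L(e_j)|\le 1$, and writing $L=(L_1,\dots,L_k)$ so that $\star L^*\vol_{(\R^n)^k}^\times=\sum_{i=1}^k\det L_i$, I would estimate each term by Hadamard's inequality, $\det L_i\le\prod_{j=1}^n|L_i(e_j)|$, and then by AM--GM applied to the squared column lengths $|L_i(e_j)|^2$, $\prod_{j=1}^n|L_i(e_j)|\le(s_i/n)^{n/2}$ with $s_i:=\sum_{j}|L_i(e_j)|^2$. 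Since $\sum_i s_i=\sum_j|L(e_j)|^2\le n$, putting $u_i:=s_i/n$ gives $\sum_i u_i\le 1$, hence $u_i\in[0,1]$ and (as $n/2\ge 1$) $u_i^{n/2}\le u_i$, so $\sum_i\det L_i\le\sum_i u_i^{n/2}\le\sum_i u_i\le 1$. Equality holds for $L=(\id,0,\dots,0)$, so the comass is $1$ and the form is a calibration.

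For the rigidity, given $L$ with $\norm{L}^n=\star L^*\vol_{(\R^n)^k}^\times$, I would dispose of $L=0$ trivially and otherwise rescale (the relation is homogeneous of degree $n$ in $L$) to assume $\norm{L}=1$; Lemma \ref{lemma-linearmaprigidity} then makes $L$ conformal, so its columns $L(e_j)$ are orthonormal and $\sum_i u_i=1$. Now every inequality in the chain above must be an equality. Equality in the term-by-term bound $\det L_i\le\prod_j|L_i(e_j)|$ forces $\det L_i=\prod_j|L_i(e_j)|$ for each $i$, and equality in AM--GM forces the column lengths $|L_i(e_1)|,\dots,|L_i(e_n)|$ all to coincide, with common value $\rho_i\ge 0$; when $\rho_i>0$ the Hadamard equality case forces these columns to be pairwise orthogonal, so in all cases $L_i$ is conformal with factor $\rho_i$. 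Finally, if $n\ge 3$ then $n/2>1$, and the equality $u_i^{n/2}=u_i$ on $[0,1]$ with $\sum_i u_i=1$ forces each $u_i\in\{0,1\}$; exactly one $u_{i_0}$ equals $1$, and $u_i=0$, i.e.\ $L_i=0$, for $i\ne i_0$.

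I do not expect a genuine obstacle: the argument is careful bookkeeping of equality cases in Hadamard's and the AM--GM inequalities. The one point requiring attention is that the single-nonzero-factor conclusion rests on the \emph{strict} convexity of $t\mapsto t^{n/2}$ on $[0,1]$, i.e.\ on $n/2>1$; this is exactly why it holds for $n\ge 3$ and fails for $n=2$, where $\vol_{(\R^2)^k}^\times=\omega_\sym$ and linear holomorphic maps with several nonzero components exist.
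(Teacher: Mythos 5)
Your proof is correct, and it takes a genuinely different route from the paper's. The paper proceeds by induction on the number of factors $k$: it peels off the last map $L_k$, sets $x_j = |L_k(e_j)|$, bounds the remaining part by $\prod_j\sqrt{1-x_j^2}$, and reduces to analysing the auxiliary function $\psi_n(x)=\prod_j\sqrt{1-x_j^2}+\prod_j x_j$ together with the structure of its equality set $E_n$. You instead treat all $k$ factors symmetrically in one pass: Hadamard's inequality $\det L_i\le\prod_j|L_i(e_j)|$, AM--GM applied columnwise giving $\prod_j|L_i(e_j)|\le u_i^{n/2}$ with $u_i=\tfrac1n\sum_j|L_i(e_j)|^2$, and the convexity bound $u_i^{n/2}\le u_i$ under $\sum_i u_i\le 1$. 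Both arguments reduce the rigidity to a single scalar equality case; for you it is the strict convexity of $t\mapsto t^{n/2}$ on $[0,1]$ (so $u_i\in\{0,1\}$ when $n\ge 3$), for the paper it is the dichotomy $E_n=\{(0,\dots,0),(1,\dots,1)\}$. Your route avoids the induction on $k$ and the extra function $\psi_n$, and it makes the role of $n\ge 3$ particularly transparent (strictness of the power inequality); you also correctly observe that the same equality analysis still yields conformality of every $L_i$ when $n=2$, where the uniqueness of the dominant factor does break down, matching the paper's separate $n=2$ discussion. The paper's inductive formulation has the mild advantage of isolating two factors at a time, which keeps the equality analysis elementary, but otherwise the two proofs extract the same information from Hadamard's inequality and differ mainly in how the product over $j$ is turned into a scalar optimization.

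One small bookkeeping remark: in the equality chain it is worth saying explicitly that $\sum_i\det L_i=\sum_i u_i^{n/2}$ together with $\det L_i\le\prod_j|L_i(e_j)|\le u_i^{n/2}$ forces $\det L_i=\prod_j|L_i(e_j)|=u_i^{n/2}$ for \emph{every} $i$; in particular $\det L_i\ge 0$, so the Hadamard equality case you invoke indeed applies and the resulting conformal $L_i$ are orientation preserving, as the statement implicitly requires. This is already implicit in what you wrote, but making it explicit closes the only place a reader might pause.
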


\begin{proof}
As a preliminary step, for each $n\ge 2$, let $\psi_n \colon [0,1]^n \to \R$ be the function $(x_1,\ldots, x_n) \mapsto \sqrt{1-x_1^2} \cdots \sqrt{1-x_n^2} + x_1\cdots x_n$. For $n=2$, we have that $\max_x \psi_2(x) = 1$ and 
\[
E_2 = \{ x \in [0,1]^2 \colon \psi_2(x)=1\} = \{(t,t) \colon t\in [0,1]\}.
\]
For $n\ge 3$ and $(x_1,\ldots, x_n) \in (0,1)^n$, we have that
\[
\psi_n(x_1,\ldots, x_n) < \psi_{n-1}(x_1,\ldots, x_{n-1}) < \psi_2(x_1,x_2) \le \psi_2(x_1,x_1) = 1,
\]
where the first two inequalities follow trivially from the assumption that $x_3,\ldots,x_n \in (0,1)$. Thus $\max_x \psi_n(x) = 1$ for each $n\ge 3$ and we have that 
\[
E_n = \{ x \in [0,1]^n \colon \psi_n(x) = 1\} = \{ (0,\ldots, 0), (1,\ldots, 1)\}.
\]

We prove now both claims in the lemma simultaneously by induction in $k\in \N$. For $k=1$, we have that $\vol_{(\R^n)^k}^\times = \vol_{\R^n}$. Thus clearly $\norm{\vol_{(\R^n)^k}^\times} = 1$ and a linear mapping $L \colon \R^n \to \R^n$ satisfying $\norm{L}^n = \star L^*\vol_{\R^n}$ is conformal by Lemma \ref{lemma-linearmaprigidity}.

Suppose now that the claim holds for $k-1$. Let $L= (L_1,\ldots, L_k)\colon \R^n \to (\R^n)^k$ be a linear map satisfying $|L(e_i)|\le 1$ for each $i\in \{1,\ldots, n\}$. Let also $L' = (L_1,\ldots, L_{k-1}) \colon \R^n \to (\R^n)^{k-1}$. For each $j\in \{1,\ldots, n\}$, let also $x_j = |L_k(e_j)|\in [0,1]$. Then
\begin{equation}
\label{eq:L-vol-n-k}
\begin{split}
\star L^*\vol_{(\R^n)^k}^\times &= \star (L')^*\vol_{(\R^n)^{k-1}}^\times + \star L_k^* \vol_{\R^n} \\
&= \vol_{(\R^n)^{k-1}}^\times(L'(e_1),\ldots, L'(e_n)) + \vol_{\R^n}(L_k(e_1),\ldots. L_k(e_n)) \\
&\le \sqrt{1-x_1^2} \cdots \sqrt{1-x_n^2} \norm{\vol_{(\R^n)^{k-1}}^\times} + x_1\cdots x_n \norm{\vol_{\R^n}} \\
&\le \sqrt{1-x_1^2} \cdots \sqrt{1-x_n^2} + x_1\cdots x_n = \psi_n(x_1,\ldots, x_n) \le 1.
\end{split}
\end{equation}
Hence 
\[
\norm{\vol_{(\R^n)^k}^\times}= \max_L \star L^*\vol_{(\R^n)^k}^\times = 1.
\]

Let now $L=(L_1,\ldots, L_n) \colon \R^n \to (\R^n)^k$ be a linear map for which $\norm{L}^n = \star L^*\vol_{(\R^n)^k}^\times$. Then, by Lemma \ref{lemma-linearmaprigidity}, the map $L$ is conformal and we may further assume that $\norm{L(e_i)}=1$ for each $i\in \{1,\ldots, n\}$. Let $L'=(L_1,\ldots, L_{k-1}) \colon \R^n \to (\R^n)^{k-1}$ and let $x_j = |L_k(e_j)|$ for each $j\in \{1,\ldots, n\}$ as above. Since $\star L^*\vol_{(\R^n)^k} = 1$, we have that all inequalities in \eqref{eq:L-vol-n-k} are equalities. In particular, we have that $(x_1,\ldots, x_n)\in E_n$.

For $n\ge 3$, we have that $(x_1,\ldots, x_n)$ is either $(0,\ldots, 0)$ or $(1,\ldots, 1)$. In the first case, we have that $L_k =0$ and $\star (L')^*\vol_{(\R^n)^{k-1}}^\times = 1$. Then, by the induction assumption, there exists $i_0\in \{1,\ldots, k-1\}$ for which $L_i = 0$ for $i\ne i_0$. In the second case, we have that $L'=0$ and we may take $i_0=k$. 

For $n=2$, we have that $x_1 = x_2$. Since we have $\star (L')^*\vol_{(\R^n)^{k-1}}^\times \le \sqrt{1-x_1^2}\sqrt{1-x_2^2}$ and we have an equality in \eqref{eq:L-vol-n-k}, we obtain that
\[
\det L_k = \star L_k^*\vol_{\R^n} = x_1x_2 = |L_k(e_1)| |L_k(e_2)|.
\]
We have by the Hadamard inequality that $L_k$ is conformal. Since $L$ and $L_k$ are conformal, we conclude that 
\[
L'(e_1)\cdot L'(e_2) = L(e_1)\cdot L(e_2) - L_k(e_1)\cdot L_k(e_2) = 0.
\]
Since $|L'(e_1)| = \sqrt{1-x_1^2} = \sqrt{1-x_2^2} = |L'(e_2)|$, we have that $L'$ is conformal and $\norm{L'}^2 = \star (L')^*\vol_{(\R^2)^{k-1}}^\times$. Thus the linear maps $L_1,\ldots, L_{k-1}$ are conformal by the induction assumption.

This concludes the proof.
\end{proof}

\begin{remark}
For $n=1$ and $k\ge 1$, the comass norm depends on $k$ and we have that
\[
\norm{\vol_{(\R^1)^k}^\times} = \sqrt{k}.
\]
Indeed, for a linear map $L=(L_1,\ldots, L_k) \colon \R\to (\R^1)^k$, $t\mapsto (ta_1,\ldots, ta_k)$, satisfying $|L(1)| = |(a_1,\ldots, a_k)| \le 1$, we have that
\[
\star L^*\vol_{(\R^1)^k}^\times = \sum_{i=1}^k \vol_{\R}(L_i(1)) = \sum_{i=1}^k a_i \le \sqrt{k} \sqrt{a_1^2+ \cdots + a_k^2} = \sqrt{k}.
\]
Since the upper bound $\sqrt{k}$ is reached, we conclude that $\norm{\vol_{(\R^1)^k}^\times}=\sqrt{k}$.
\end{remark}


\section{Calibrated curves are $n$-energy minimizing}
\label{sec:n-harmonic}

In this section we show that calibrated curves with Euclidean targets are $n$-harmonic and hence $C^{1,\alpha}$-H\"older continuous. The $n$-harmonicity follows along the same lines as quasiminimality of quasiregular curves; see \cite[Section 2]{Pankka-AASF-2020}. The main difference is that in \cite{Pankka-AASF-2020} the energy of the map is understood in terms of the operator norm instead of the Hilbert--Schmidt norm. Recall that the Hilbert--Schmidt norm of a linear map $L \colon V\to W$ is 
\[
\norm{L}_\HS = \left( \sum_{i=1}^n |L(e_i)|^2 \right)^{1/2}
\]
where $(e_1,\ldots, e_n)$ is an orthonormal basis of $V$. Note that
\[
\norm{L} \le \frac{1}{n^{1/2}} \norm{L}_{\HS},
\]
where the inequality is an equality if $L$ is conformal. 

We say that a mapping $u \colon M\to N$ between Riemannian manifolds in $W_{\loc}^{1,n}(M,N)$ is a \emph{local $n$-minimizer} if for each compact submanifold $G\subset M$ with boundary and each $W^{1,n}(G,N)$-mapping $v\colon G \to N$ having the same trace at $\partial G$ as $u$, we have that 
\begin{equation}
\label{eq:n-minimizer}
\int_{G} \norm{Du}_\HS^n \le \int_{G} \norm{Dv}_\HS^n;
\end{equation}
see Hardt and Lin \cite{Hardt-Lin-CPAM-1987} for the terminology. 

It is easy to show that calibrated curves $M\to \R^m$ are local $n$-minimizers and hence $C^{1,\alpha}$-regular by a result of Hardt and Lin \cite[Corollary 3.2]{Hardt-Lin-CPAM-1987}.

\begin{theorem}
\label{thm:Hardt-Lin}
Let $M$ be a Riemannian $n$-manifold, $\omega \in \Omega^n(\R^m)$ a calibration, and $F\colon M\to \R^m$ an $\omega$-calibrated curve. Then $F$ is a local $n$-minimizer. In particular, $F$ is $C^{1,\alpha}$-regular for $\alpha \in (0,1)$.
\end{theorem}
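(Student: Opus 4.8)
The plan is to verify the $n$-minimizer inequality \eqref{eq:n-minimizer} directly from the calibration identity \eqref{eq:associated}, using that calibrations are closed forms together with a homotopy/Stokes argument to compare the pulled-back calibration integrals of $F$ and of any competitor $v$ sharing its boundary trace. First I would record the pointwise algebra: since $\norm{DF}^n = \star F^*\omega$ a.e., Lemma \ref{lemma-linearmaprigidity} tells us that $DF$ is conformal a.e., so $\norm{DF}_\HS = n^{1/2}\norm{DF}$ a.e., and hence $\norm{DF}_\HS^n = n^{n/2}\,\star F^*\omega$ a.e. For an arbitrary $v$, one always has the comass bound $\star v^*\omega \le \norm{\omega}\circ v\cdot\norm{Dv}^n_{\mathrm{op}}\le \norm{Dv}^n \le n^{-n/2}\norm{Dv}_\HS^n$ (using $\norm{\omega}\equiv 1$ and the general inequality $\norm{L}\le n^{-1/2}\norm{L}_\HS$). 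So over any compact submanifold $G\subset M$ with boundary,
\[
\int_G \norm{DF}_\HS^n = n^{n/2}\int_G \star F^*\omega = n^{n/2}\int_G F^*\omega,
\]
and similarly $\int_G \norm{Dv}_\HS^n \ge n^{n/2}\int_G v^*\omega$, so it suffices to show $\int_G F^*\omega = \int_G v^*\omega$ whenever $F$ and $v$ agree on $\partial G$ and both lie in $W^{1,n}(G,\R^m)$.

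The equality $\int_G F^*\omega = \int_G v^*\omega$ is where the work is. The clean way is to exploit that $\omega$ is an exact form on $\R^m$: since $\omega\in\Omega^n(\R^m)$ is closed and $\R^m$ is contractible, write $\omega = d\eta$ for some $\eta\in\Omega^{n-1}(\R^m)$. Then $F^*\omega = d(F^*\eta)$ and $v^*\omega = d(v^*\eta)$ in the distributional sense on $G$, and by Stokes' theorem (valid in $W^{1,n}$ by approximation, since $n$-integrability of the derivative is exactly the borderline regularity that makes $F^*\eta$ an $L^{n/(n-1)}$ form with the right structure) we get $\int_G F^*\omega = \int_{\partial G} F^*\eta$ and $\int_G v^*\omega = \int_{\partial G} v^*\eta$. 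Because $F$ and $v$ have the same trace on $\partial G$, the boundary integrals $\int_{\partial G}F^*\eta$ and $\int_{\partial G}v^*\eta$ coincide, giving the desired equality. Combining with the first paragraph yields $\int_G\norm{DF}_\HS^n \le \int_G\norm{Dv}_\HS^n$, i.e. $F$ is a local $n$-minimizer. The final conclusion, $C^{1,\alpha}$-regularity, is then immediate from \cite[Corollary 3.2]{Hardt-Lin-CPAM-1987}.

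The main obstacle is the Stokes/approximation step for $W^{1,n}$ maps: one must justify that $\int_G F^*\omega$ depends only on the boundary trace of $F$. Two technical points need care. First, the pullback $F^*\eta$ of the primitive $\eta$ need not be globally integrable with the naive bound unless one uses that the coefficients of $\eta$ grow at most linearly — or, better, one localizes so that $F(G)$ is bounded and replaces $\eta$ by a compactly supported modification, which is legitimate since only values of $\omega$ and $\eta$ on the (precompact) image matter. Second, approximating $F$ by smooth maps with controlled traces requires the standard $W^{1,n}$ trace theory on the compact manifold-with-boundary $G$; alternatively one can argue via \cite[Section 2]{Pankka-AASF-2020}, where the analogous trace-invariance of $\int_G F^*\omega$ for quasiregular $\omega$-curves is established — the present situation is the same computation with $\norm{DF}_\HS$ in place of $\norm{DF}_{\mathrm{op}}$, the only change being the conformality-forced equality $\norm{DF}_\HS^n = n^{n/2}\star F^*\omega$ coming from Lemma \ref{lemma-linearmaprigidity} rather than an inequality. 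So in practice I would cite that argument and simply indicate the modification.
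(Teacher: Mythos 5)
Your proposal is correct and takes essentially the same approach as the paper: the conformality forced by the calibration identity gives $\frac{1}{n^{n/2}}\norm{DF}_\HS^n = \star F^*\omega$ pointwise, the integral $\int_G F^*\omega$ depends only on the boundary trace because $\omega$ is closed on the contractible target $\R^m$, and the comass/Hadamard bound controls the competitor's energy. The only cosmetic difference is that you phrase the trace-dependence via Stokes' theorem with a primitive $\eta$ of $\omega$, whereas the paper first handles continuous competitors by a homotopy-rel-$\partial G$ argument and then passes to general $W^{1,n}$ competitors by convolution — these are equivalent routes, and you have even pointed to the same reference \cite[Section 2]{Pankka-AASF-2020} for the technical $W^{1,n}$ approximation step that the paper relies on.
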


\begin{proof}
Let $G \subset M$ be a compact submanifold with boundary. Suppose first that $v \colon G\to \R^m$ is a continuous map in $W^{1,n}(G,\R^m)$ satisfying $v|_{\partial G} = F|_{\partial G}$. Since $\R^m$ is contractible, the map $v$ is homotopic to $F|_G \colon G\to \R^m$ (rel $\partial G$). Thus 
\[
\int_G F^*\omega = \int_G v^*\omega.
\]
Since $F$ is $\omega$-calibrated, we have that $\frac{1}{n^{n/2}} \norm{DF}^n_\HS = \star F^*\omega$ almost everywhere in $M$. Thus
\[
\frac{1}{n^{n/2}} \int_G \norm{DF}^n_\HS = \int_G F^*\omega = \int_G v^*\omega \le \int_G \norm{Dv}^n \le \frac{1}{n^{n/2}} \int_G \norm{Dv}^n_\HS.
\]
The general case of a test mapping $v\in W^{1,n}(G,N)$, having the same trace at $\partial G$ as $F$, follows now by a standard convolution argument.
\end{proof}

The Euclidean version of Theorem \ref{thm:manifold-Liouville} is now an immediate consequence of  Theorem \ref{thm:Hardt-Lin}.
\begin{theorem}
\label{thm:Euclidean-Liouville}
Let $n\ge 3$ and $k\ge 1$. Let $F=(f_1,\ldots, f_n) \colon \Omega\to (\R^n)^k$ be a $\vol_{(\R^n)^k}^\times$-calibrated curve, where $\Omega\subset \R^n$ is a domain. Then there exists an index $i_0\in \{1,\ldots, k\}$ for which the coordinate mapping $f_{i_0} \colon \Omega \to \R^n$ is a restriction of a M\"obius transformation $\bS^n \to \bS^n$ and each $f_i \colon \Omega \to \R^n$ for $i\ne i_0$ is a constant map.
\end{theorem}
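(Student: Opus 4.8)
The plan is to derive Theorem~\ref{thm:Euclidean-Liouville} from Theorem~\ref{thm:Hardt-Lin} by pointwise linear algebra plus the classical Liouville theorem. First I would invoke Theorem~\ref{thm:Hardt-Lin}: since $\vol_{(\R^n)^k}^\times$ is a calibration on $(\R^n)^k$ (Lemma~\ref{lemma:linear-rigidity}) and $F$ is $\vol_{(\R^n)^k}^\times$-calibrated, $F$ is a local $n$-minimizer and hence $C^{1,\alpha}$-regular on $\Omega$. In particular the differential $(DF)_x$ is defined and continuous at every point $x\in\Omega$, and the calibration identity $\norm{DF}^n = \star F^*\vol_{(\R^n)^k}^\times$ now holds \emph{everywhere}, not merely almost everywhere.

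Next, fix $x\in\Omega$ and apply Lemma~\ref{lemma:linear-rigidity} to the linear map $L=(DF)_x=(L_1,\ldots,L_k)\colon\R^n\to(\R^n)^k$, where $L_i=(Df_i)_x$. Since $n\ge 3$, the lemma gives an index $i(x)\in\{1,\ldots,k\}$ such that $L_j=0$ for all $j\ne i(x)$, and moreover $L$ (hence $L_{i(x)}$) is conformal. Define, for each $i$, the open set $U_i=\{x\in\Omega : (Df_i)_x\ne 0\}$; these are pairwise disjoint by the previous sentence, and each $f_j$ with $x\notin U_j$ for all $j\ne i$ is locally constant there. The key point is that $\Omega\setminus\bigcup_i U_i$ is precisely the set where $DF$ vanishes, i.e.\ where $F$ is critical. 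If $F$ is non-constant, then since $\Omega$ is connected and each $f_i$ is $C^1$, I claim exactly one $U_i$ is nonempty and in fact equals $\Omega$; here the main obstacle is ruling out a nontrivial critical set separating the $U_i$'s, which I would handle by observing that on each connected component of $\Omega\setminus\overline{U_i}$ the remaining coordinates are constant, and using continuity of $DF$ together with the fact that $f_i$ restricted to $U_i$ is conformal (in particular real-analytic, by Liouville, hence with nowhere-vanishing differential on the component of $\Omega$ it extends to) to propagate the non-vanishing across $\Omega$. Concretely: pick $x_0$ with $DF(x_0)\ne 0$, say $i(x_0)=i_0$; then $f_{i_0}$ is conformal near $x_0$, hence a restriction of a M\"obius transformation $\bS^n\to\bS^n$ by Liouville's theorem; a M\"obius transformation has everywhere nonvanishing differential, so $f_{i_0}$ extends conformally with $Df_{i_0}\ne 0$ on a maximal subdomain, which by a standard open–closed argument on the connected set $\Omega$ must be all of $\Omega$.

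It remains to conclude that $f_i$ is constant for $i\ne i_0$. Since $Df_{i_0}\ne 0$ everywhere on $\Omega$, we have $U_{i_0}=\Omega$, and by disjointness $U_i=\emptyset$ for $i\ne i_0$, i.e.\ $Df_i\equiv 0$ on the connected set $\Omega$; hence each such $f_i$ is constant. This gives both conclusions: $f_{i_0}$ is a restriction of a M\"obius transformation $\bS^n\to\bS^n$ and $f_i$ is constant for $i\ne i_0$. The only genuinely delicate step is the global propagation of $Df_{i_0}\ne 0$ from a neighborhood of $x_0$ to all of $\Omega$; everything else is immediate from Theorem~\ref{thm:Hardt-Lin}, Lemma~\ref{lemma:linear-rigidity}, and the classical Liouville theorem quoted in the first Example of the introduction.
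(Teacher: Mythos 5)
Your proposal is correct and follows essentially the same route as the paper: apply Theorem~\ref{thm:Hardt-Lin} for $C^{1,\alpha}$ regularity, use Lemma~\ref{lemma:linear-rigidity} pointwise on $DF(x)$ to isolate a single conformal coordinate map, invoke the classical Liouville theorem to identify it with a M\"obius transformation, and then exploit the nowhere-vanishing differential of the M\"obius extension together with continuity of $DF$ in an open--closed argument on the connected set $\Omega$ to propagate non-degeneracy and constancy of the other coordinates globally. The paper phrases the propagation slightly differently (covering a component $G$ of $\{DF\ne 0\}$ and then noting the boundary of $G$ would force the M\"obius differential to vanish), but the mechanism and the ingredients are the same as yours.
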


\begin{proof}
We may assume that $F$ is non-constant. By Theorem \ref{thm:Hardt-Lin}, the differential $DF$ is continuous. Let 
\[
\widetilde \Omega = \{ x\in \Omega \colon DF(x)\ne 0\}.
\]
Since $F$ is non-constant, we have that $\widetilde \Omega$ is a non-empty open set.

For each $x\in \widetilde \Omega$, there exists, by continuity of $DF$ and Lemma \ref{lemma:linear-rigidity}, a connected neighborhood $U_x \subset \widetilde \Omega$ of $x$ and an index $i_x\in \{1,\ldots, k\}$ for which $Df_{i_x}$ is non-vanishing in $U_x$ and $Df_i|_{U_x} =0$ for $i\ne i_0$. Since $f_{i_x}$ is conformal in $U_x$, we have that it is M\"obius by the classical Liouville's theorem.

Let now $G \subset \widetilde \Omega$ be a component of $\widetilde \Omega$. Then $\{ U_x \}_{x\in G}$ is a covering of $G$. Since $G$ is connected, we conclude that there exist a unique index $i_G\in \{1,\ldots, k\}$ for which $f_{i_G}|_G$ is a non-constant M\"obius transformation and that each $f_i$ is constant in $G$ for $i\ne i_G$. Let $f\colon \bS^n\to \bS^n$ be the unique M\"obius transformation extending $f_{i_G}|_G$. 

Since the zero set of $DF$ is contained in the zero set of $Df$, which is empty, we conclude that $\widetilde \Omega = \Omega$. Thus $G=\Omega$. The claim is proven.
\end{proof}

\begin{remark}
\label{rmk:example-holomorphic}
Similarly $\vol_{(\R^2)^k}^\times$-calibrated curves are holomorphic curves. Indeed, let $\Omega$ be a domain in $\R^2$ and let $F\colon (f_1,\ldots, f_k) \colon \Omega \to (\R^2)^k$ be a $\vol_{(\R^2)^k}^\times$-calibrated curve. Then, by Lemma \ref{lemma:linear-rigidity} we have that each $f_i$ has conformal differential. Hence each $f_i$ is a weakly conformal map in the Sobolev space $W^{1,2}_\loc(\Omega, \R^2)$. Since each $f_i$ is a weak solution to the Cauchy--Riemann equations, we have that $f_i$ is holomorphic by Weyl's lemma; see e.g.~Astala--Iwaniec--Martin \cite[Lemma A.6.10]{Astala-Iwaniec-Martin-book}.
\end{remark}


\section{Bounded sequences of quasiregular $\vol_{(\R^n)^k}^\times$-curves are normal}
\label{sec:normal-family}

In this section, we show that a bounded sequence of $K$-quasiregular $\vol_{(\R^n)^k}^\times$-curves is normal. Since locally uniform limits of $K$-quasiregular $\omega$-curves are $K$-quasiregular $\omega$-curves by \cite[Theorem 1.9]{Pankka-AASF-2020}, it suffices to show that a bounded sequence of $K$-quasiregular $\omega$-curves $\Omega \to \R^m$, where $\Omega \subset \R^n$ is a domain, has a converging subsequence when $\omega$ is a calibration with constant coefficients.

\begin{theorem}
\label{thm:normal}
Let $n\ge 2$ and let $\omega \in \wedge^n \R^m$ be a calibration. Let $\Omega \subset \R^n$ be a domain, let $K\ge 1$ and let $(F_k)_{k\in \N}$ be a bounded sequence of $K$-quasiregular $\omega$-curves $F_k\colon \Omega \to \R^m$. Then the sequence $(F_k)_{k\in \N}$ has a locally uniformly converging subsequence and the limiting map is a $K$-quasiregular $\omega$-curve.
\end{theorem}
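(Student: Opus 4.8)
The plan is to deduce normality from a uniform energy bound plus the $C^0$-closedness of the quasiregular-curve relation. First I would observe that a $K$-quasiregular $\omega$-curve $F\colon\Omega\to\R^m$ satisfies $\norm{DF}^n\le K(\star F^*\omega)\le K\norm{\omega}\norm{DF}^n$, so in particular the $n$-energy of $F$ is controlled by the integral of $\star F^*\omega = F^*\omega/\dx\wedge\cdots$ over any compactly contained subdomain. Since $\omega$ has constant coefficients it is exact, $\omega = d\eta$ for some smooth $(n-1)$-form $\eta$ on $\R^m$ with linear coefficients; hence by Stokes $\int_{B}F^*\omega = \int_{\partial B}F^*\eta$ for any ball $B\Subset\Omega$, and the right-hand side is bounded in terms of $\norm{\eta\circ F}_\infty$ (controlled by the assumed uniform bound $\sup_k\norm{F_k}_{L^\infty(\Omega)}<\infty$) and $\norm{DF}_{L^{n-1}(\partial B)}$. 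This is the usual ``Caccioppoli-type'' estimate for quasiregular curves: one applies it on a family of concentric spheres and averages in the radius, together with Young's inequality, to absorb the gradient term and obtain a bound $\int_{B'}\norm{DF_k}^n\,dx \le C(n,K,\norm{\eta},\sup_j\norm{F_j}_\infty, B', B)$ for $B'\Subset B\Subset\Omega$, uniform in $k$. I would cite the analogous computation in \cite[Section 2]{Pankka-AASF-2020} or reproduce it in a couple of lines.

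Next, from the uniform bound on $\norm{F_k}_{L^\infty(\Omega)}$ and the uniform local bound on $\norm{DF_k}_{L^n}$, the sequence $(F_k)$ is bounded in $W^{1,n}_{\loc}(\Omega,\R^m)$, so by Morrey's embedding it is bounded and locally equicontinuous (indeed locally uniformly $(1-n/n)$... rather, $C^{0,\alpha}$ for no $\alpha$ in the borderline case $p=n$, so instead I would use: $W^{1,n}\hookrightarrow$ VMO and a Sobolev--Poincaré / Morrey-type modulus-of-continuity estimate giving local equicontinuity from a uniform $W^{1,n}$ bound, or simply use that bounded sequences in $W^{1,n}_{\loc}$ with the extra energy bound on all small balls yield an equicontinuity modulus via the oscillation estimate $\operatorname{osc}_{B(x,r)}F \lesssim (\int_{B(x,2r)}\norm{DF}^n)^{1/n}\cdot$(log factor)). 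With uniform local boundedness and equicontinuity in hand, Arzelà–Ascoli produces a subsequence converging locally uniformly to some continuous $F\colon\Omega\to\R^m$, and the $W^{1,n}_{\loc}$ bound lets us further assume $DF_{k_j}\rightharpoonup DF$ weakly in $L^n_{\loc}$, so $F\in W^{1,n}_{\loc}$.

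Finally, to see the limit $F$ is again a $K$-quasiregular $\omega$-curve I would invoke \cite[Theorem 1.9]{Pankka-AASF-2020}, which asserts precisely that locally uniform limits of $K$-quasiregular $\omega$-curves are $K$-quasiregular $\omega$-curves (the defining relation \eqref{eq:QRC} is $C^0$-closed); this is exactly the reduction already announced in the paragraph preceding the statement, so no separate argument is needed. For the full manifold-free statement of the theorem this suffices; the transfer back to the manifold setting of the original normal-family claim is handled, as remarked, by the same \cite[Theorem 1.9]{Pankka-AASF-2020}.

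The main obstacle I expect is the equicontinuity step, because $n$ is the borderline Sobolev exponent, so a uniform $W^{1,n}_{\loc}$ bound alone does not give an equicontinuity modulus: one genuinely needs the stronger, scale-invariant energy bound $\int_{B(x,r)}\norm{DF_k}^n\,dx\le C(\norm{F_k}_\infty,\dots)$ valid \emph{for all small balls} $B(x,r)$ with $x$ in a fixed compact set and $C$ independent of $x$, $r$, and $k$. That is why the Stokes/Caccioppoli estimate must be proved in the sharp local form (with constant depending only on $\sup_k\norm{F_k}_\infty$, $\norm{\eta}$, $n$, $K$, and the distance to $\partial\Omega$), after which the oscillation estimate and Arzelà–Ascoli are routine. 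A secondary technical point is choosing $\eta$ with linear coefficients so that $\norm{\eta\circ F_k}$ is directly bounded by $\sup_k\norm{F_k}_\infty$ on the relevant spheres; since $\omega$ is constant this is elementary.
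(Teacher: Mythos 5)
You correctly identify the crux: at the borderline Sobolev exponent $p = n$ a uniform $W^{1,n}_{\loc}$ bound alone does not yield equicontinuity, and your Caccioppoli/Stokes estimate is the right starting point. However, the fix you propose does not close the gap. You ask for a ``scale-invariant energy bound $\int_{B(x,r)}\norm{DF_k}^n\,dx \le C$ valid for all small balls,'' but such a bound is \emph{uniform in $r$}, not \emph{decaying in $r$}, and a uniform-in-$r$ bound is exactly what the Caccioppoli estimate already gives (via $\diam F_k(B) \le 2\sup_k\norm{F_k}_\infty$) without producing equicontinuity. The logarithmic oscillation estimate $\operatorname{osc}_{B_r} F \lesssim (\int_{B_{2r}}\norm{DF}^n)^{1/n}\cdot(\log\text{ factor})$ is not a valid inequality for general $W^{1,n}$ maps (they are not even continuous), and the VMO embedding likewise gives no modulus of continuity; so as written the Arzel\`a--Ascoli step does not follow.

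What is genuinely needed, and what the paper supplies, is a \emph{Morrey-type decay} of the local energy: from \cite[Lemma 5.1]{Onninen-Pankka-arXiv-2020}, for the H\"older exponent $\alpha = 1/(K|\omega|_{\ell_1})$ one has
\[
\left(\frac{1}{|B_r|}\int_{B_r}\norm{DF_k}^n\right)^{1/n} \lesssim \frac{r^{\alpha-1}}{R^\alpha}\left(\int_{B_{3R}}\norm{DF_k}^n\right)^{1/n},
\]
which says $\int_{B_r}\norm{DF_k}^n \lesssim r^{n\alpha}$. Combining this decay with the Caccioppoli bound (to control the right-hand side uniformly in $k$ via the $L^\infty$ bound) and a standard chain argument yields a \emph{uniform} local $\alpha$-H\"older estimate for the $F_k$, which is the precise input Arzel\`a--Ascoli needs. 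This iteration/decay lemma is the missing idea in your outline; once it is in place the remainder of your argument --- weak $W^{1,n}_{\loc}$ convergence along a subsequence and the $C^0$-closedness of the relation from \cite[Theorem 1.9]{Pankka-AASF-2020} --- is exactly what the paper does, and your final step is correct. (A minor side note: the theorem as stated here is entirely Euclidean, so there is no separate ``transfer back to the manifold setting'' to perform for this particular statement.)
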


The proof of Theorem \ref{thm:normal} is based on a quantitative version of local H\"older continuity for quasiregular curves. In \cite{Onninen-Pankka-arXiv-2020} it is shown, using the now standard Morrey's argument, that, for a calibration $\omega\in \wedge^n \R^m$, a $K$-quasiregular $\omega$-curve is locally $\alpha$-H\"older continuous with
\begin{equation}
\label{eq:Holder-alpha}
\alpha = \frac{1}{K} \frac{\norm{\omega}}{|\omega|_{\ell_1}} = \frac{1}{K} \frac{1}{|\omega|_{\ell_1}},
\end{equation}
where $|\omega|_{\ell_1}$ is the $\ell_1$-norm of $\omega$. It is not known whether the exponent $\alpha$ is sharp.

In what follows, we verify that the same argument yields also uniform estimates for the multiplicative H\"older constant for sequences of bounded curves. This immediately yields Theorem \ref{thm:normal}. For the initial step in the proof we quote the following Caccioppoli type estimate which states that, locally, the diameter of the image controls the energy of the curve; see \cite[Lemma 6.1]{Onninen-Pankka-arXiv-2020} for details.

\begin{lemma}
\label{lem:bounded-energy}
Let $n\ge 2$ and let $\omega \in \wedge^n \R^m$ be a calibration. Let $\Omega \subset \R^n$ be a domain, let $K\ge 1$ and let $F\colon \Omega \to \R^m$ be a $K$-quasiregular $\omega$-curve. 
Then
\[
\left( \int_{\frac{1}{2}B} \norm{DF}^n \right)^\frac{1}{n} \le C(n) K \diam(FB)
\]
for every ball $B\subset \R^n$ satisfying $\overline{B}\subset \Omega$.
\end{lemma}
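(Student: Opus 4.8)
The plan is to run the standard Caccioppoli / hole‑filling argument, using the calibration structure of $\omega$ to produce a primitive whose size is controlled by the \emph{local oscillation} of $F$.

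\textbf{Reduction.} Since $\omega$ is a calibration, $\norm{\omega}\equiv 1$, so the distortion inequality \eqref{eq:QRC} reads $\norm{DF}^n\le K\,(\star F^*\omega)$ a.e.; in particular $\star F^*\omega\ge 0$ a.e. Hence it suffices to bound $\int_{\frac12 B}\star F^*\omega$ from above. Fix $B=B(x_0,R_0)$ with $\overline B\subset\Omega$. Replacing $F$ by $F-F(x_0)$ changes neither $\norm{DF}$, $\star F^*\omega$, nor $\diam(FB)$ (as $\omega$ has constant coefficients), so we may assume $F(x_0)=0$.

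\textbf{A linear primitive.} Let $E$ be the Euler field $E_y=y$ on $\R^m$ and put $\beta=\tfrac1n\,\iota_E\omega\in\Omega^{n-1}(\R^m)$. Because $\omega$ is closed with constant coefficients, Cartan's formula gives $d\beta=\tfrac1n\,\mathcal L_E\omega=\omega$, and from the definition of the comass norm one reads off $\norm{\beta_y}\le\tfrac1n|y|\le|y|$. For $F\in W^{1,n}_{\loc}$ the pullback $F^*\beta$ lies in $L^{n/(n-1)}_{\loc}$ and satisfies $F^*\omega=d(F^*\beta)$ in the distributional sense (this is the only genuinely analytic point; it follows by mollification). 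Moreover, pointwise $\norm{(F^*\beta)_x}\le|F(x)|\,\norm{DF(x)}^{n-1}$, so $\norm{(F^*\beta)_x}\le\diam(FB)\,\norm{DF(x)}^{n-1}$ for a.e.\ $x\in B$.

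\textbf{Caccioppoli estimate and absorption.} For $R_0/2\le\rho<R\le R_0$ pick $\eta\in C_c^\infty(B(x_0,R))$ with $0\le\eta\le1$, $\eta\equiv1$ on $B(x_0,\rho)$ and $|\nabla\eta|\le 2/(R-\rho)$. Integration by parts (legitimate by the previous paragraph) gives
\[
\int_{B(x_0,\rho)}\star F^*\omega\ \le\ \int_{B(x_0,R)}\eta^n\,\star F^*\omega\ =\ -n\int_{B(x_0,R)}\eta^{n-1}\,d\eta\wedge F^*\beta,
\]
and since $|\star(d\eta\wedge F^*\beta)|\le|\nabla\eta|\,\norm{(F^*\beta)}$ we obtain, using Step~1 and Step~2,
\[
\int_{B(x_0,\rho)}\norm{DF}^n\ \le\ K\!\int_{B(x_0,\rho)}\!\star F^*\omega\ \le\ \frac{2nK\,\diam(FB)}{R-\rho}\int_{B(x_0,R)}\norm{DF}^{n-1}.
\]
Now apply Young's inequality $ab^{n-1}\le\tau b^n+C(n)\tau^{-(n-1)}a^n$ pointwise with $\tau=\tfrac12$, together with $|B(x_0,R)|\le C(n)R_0^n$ and $\diam(F(B(x_0,R)))\le\diam(FB)$, to get
\[
\int_{B(x_0,\rho)}\norm{DF}^n\ \le\ \tfrac12\int_{B(x_0,R)}\norm{DF}^n+\frac{C(n)K^n\,\diam(FB)^n\,R_0^n}{(R-\rho)^n}
\]
for all $R_0/2\le\rho<R\le R_0$. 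The standard iteration lemma (if $f(s)\le\theta f(t)+A(t-s)^{-\alpha}$ on $[r_0,r_1]$ with $\theta<1$, then $f(r_0)\le c(\alpha,\theta)A(r_1-r_0)^{-\alpha}$) applied on $[R_0/2,R_0]$ yields $\int_{\frac12 B}\norm{DF}^n\le C(n)K^n\diam(FB)^n$, which is the claim.

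\textbf{Main obstacle.} Everything is routine once two points are handled carefully: first, the distributional identity $F^*\omega=d(F^*\beta)$ for merely $W^{1,n}$ maps, which is what makes the integration by parts valid and is established by approximating $F$ with smooth maps in $W^{1,n}_{\loc}$; second, the absorption step must not generate a term of size $R_0^n$ unaccompanied by $\diam(FB)^n$ — this is exactly why one uses the Euler‑field primitive of Step~2, whose comass is controlled by $|F(x)-F(x_0)|$ rather than by a fixed scale. The hole‑filling iteration to pass from an integral over $B$ on the right to one over $\tfrac12 B$ on the left is then entirely standard.
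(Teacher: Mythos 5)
Your proof is correct, and it is the standard Caccioppoli/hole-filling argument that underlies the cited reference. The key choice of the Euler-field primitive $\beta=\tfrac1n\iota_E\omega$, which gives both $d\beta=\omega$ and the comass bound $\norm{\beta_y}\le|y|/n$ so that $\norm{F^*\beta}\le\diam(FB)\,\norm{DF}^{n-1}$ on $B$ after the normalization $F(x_0)=0$, is exactly what produces the diameter on the right-hand side rather than a fixed scale. The distributional chain rule $F^*\omega=d(F^*\beta)$ for continuous $W^{1,n}_\loc$ maps into $\R^m$ does need the mollification argument you gesture at (using that $F$ is continuous so $F*\phi_\varepsilon\to F$ locally uniformly as well as in $W^{1,n}_\loc$, which makes both $F_\varepsilon^*\beta\to F^*\beta$ in $L^{n/(n-1)}_\loc$ and $F_\varepsilon^*\omega\to F^*\omega$ in $L^1_\loc$), and the Young-plus-iteration step is routine; with those points filled in as you indicate, the constant comes out as $C(n)K$ as claimed.
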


\begin{proof}[Proof of Theorem \ref{thm:normal}]
By the Arzel\`a--Ascoli theorem, it suffices to show that the sequence $(F_k)$ is locally uniformly $\alpha$-H\"older. That is, there exists a constant $0<\alpha \le 1$ for which for every $x\in \Omega$ there exists a constant $C>0$ and a neighborhood $U$ satisfying
\[
|F_k(x)-F_k(y)| \le C|x-y|^\alpha
\]
for every $y\in U$ and every $k\in \N$.

Let $x\in \Omega$ and let $R>0$ be such that $\overline{B^n(x,8R)} \subset \Omega$. Let also $a\in B^n(x,R)$ and $r>0$ be such that $B_r = B^n(a,r) \subset B^n(x,2R)$.

Unraveling the statement of \cite[Lemma 5.1]{Onninen-Pankka-arXiv-2020}, we obtain that, for $\alpha\in (0,1)$ in \eqref{eq:Holder-alpha},
we have that
\begin{align*}
\left( \frac{1}{|B_r|} \int_{B_r} \norm{DF_k}^n  \right)^\frac{1}{n} 
&\le \frac{1}{|B(0,1)|^\frac{1}{n}} \frac{r^{\alpha-1}}{R^{\alpha}}  \left( \int_{B^n(x,3R)} \norm{DF_k}^n \right)^\frac{1}{n}
\end{align*}
for every $k\in \N$.

Let now
\[
L = \sup_k \diam(F_k\Omega)<\infty.
\]
Then, by Lemma \ref{lem:bounded-energy}, we have that 
\[
\left( \frac{1}{|B_r|} \int_{B_r} \norm{DF_k}^n  \right)^\frac{1}{n} \le C(n,R,\alpha) KL r^{\alpha-1}
\]
for every $k\in \N$.

We may now apply standard chain argument (see Haj\l asz--Koskela \cite{Hajlasz-Koskela-SobolevmetPoincare} for the general result or \cite[Lemma 5.2]{Onninen-Pankka-arXiv-2020} for the special case), to obtain
\begin{align*}
|F_k(x)-F_k(y)| \le C(n,R,\alpha,K,L) |x-y|^\alpha \left( \int_{B^n(x,4R)} \norm{DF_k}^n \right)^\frac{1}{n} 
\end{align*}
for every $y\in B^n(x,R)$ and every $k\in \N$. By applying Lemma \ref{lem:bounded-energy} again, we have that
\[
|F_k(x)-F_k(y)| \le C(n,R,K,L,\omega)|x-y|^\alpha
\]
for every $y\in B^n(x,R)$ and every $k\in \N$.
\end{proof}


\section{Local injectivity of $\vol_{(\R^n)^k}^\times$-curves of small distortion}
\label{sec:Iwaniec}

In this section, we prove that quasiregular $\vol_{(\R^n)^k}^\times$-curves of small distortion are local quasiconformal embeddings.

\begin{theorem}
\label{thm:local-injectivity}
Let $n\ge 3$, $k\ge 1$ and $H>1$. Then there exists $\varepsilon=\varepsilon(n,k,H)>0$ for the following. Let $\Omega \subset \R^n$ be a domain and let $F\colon \Omega\to (\R^n)^k$ be a non-constant $(1+\varepsilon)$-quasiregular $\vol_{(\R^n)^k}^\times$-curve. Then $F$ is a local $H$-quasiconformal embedding.
\end{theorem}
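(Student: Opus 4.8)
The plan is to argue by contradiction through a blow-up argument, in the spirit of the uniform proximity of small-distortion curves to calibrated curves: the main tools will be the normal family theorem (Theorem~\ref{thm:normal}), the Euclidean Liouville theorem (Theorem~\ref{thm:Euclidean-Liouville}), and the scale-invariance of the $\vol_{(\R^n)^k}^\times$-curve condition under similarities, which holds because $\vol_{(\R^n)^k}^\times$ has constant coefficients. Throughout I write $H_F(x,r)=\frac{\sup_{|y-x|=r}|F(y)-F(x)|}{\inf_{|y-x|=r}|F(y)-F(x)|}$ for the metric distortion quotient of $F$ at $x$ at scale $r$.

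Suppose the statement fails for some fixed $H>1$. Then for every $j\in\N$ there are a domain $\Omega_j\subset\R^n$, a non-constant $(1+1/j)$-quasiregular $\vol_{(\R^n)^k}^\times$-curve $F_j\colon\Omega_j\to(\R^n)^k$, and a point $a_j\in\Omega_j$ at which $F_j$ either is not locally injective or satisfies $\limsup_{r\to0}H_{F_j}(a_j,r)>H$. After a translation and a dilation of the domain we may assume $a_j=0$ and $\dist(0,\partial\Omega_j)\ge1$. In the non-injective case choose $y_j\neq z_j$ in $B^n(0,1/j)$ with $F_j(y_j)=F_j(z_j)$ and put $\rho_j=|y_j-z_j|$ and $c_j=y_j$; in the distortion case choose $\rho_j<1/j$ with $H_{F_j}(0,\rho_j)>H$ and put $c_j=0$. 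In both cases $\rho_j\to0$, and (after a routine reduction to the case where $F_j$ is nowhere locally constant, so that $s_j:=\sup_{|w|=1}|F_j(c_j+\rho_j w)-F_j(c_j)|>0$) we set
\[
G_j(w)=\frac{F_j(c_j+\rho_j w)-F_j(c_j)}{s_j}.
\]
Then $G_j$ is a $(1+1/j)$-quasiregular $\vol_{(\R^n)^k}^\times$-curve on $B^n(0,R_j)$ with $R_j=\dist(c_j,\partial\Omega_j)/\rho_j\to\infty$, normalized by $G_j(0)=0$ and $\sup_{|w|=1}|G_j|=1$; and the defect is recorded at unit scale, namely $H_{G_j}(0,1)=H_{F_j}(0,\rho_j)>H$ in the distortion case, and $G_j(0)=G_j(w_j)=0$ with $w_j=(z_j-y_j)/\rho_j\in\partial B^n(0,1)$ in the non-injective case.

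The crux is to show that $(G_j)$ is uniformly bounded on compact subsets of $\R^n$, so that Theorem~\ref{thm:normal} applies. This follows from Lemma~\ref{lem:bounded-energy} together with the quantitative local H\"older estimate underlying the proof of Theorem~\ref{thm:normal}: the oscillation of $G_j$ on a ball is controlled by the diameter of its image on the concentric ball of twice the radius, and chaining these bounds outward from the unit ball, together with $G_j(0)=0$ and $\sup_{|w|=1}|G_j|=1$, bounds $\sup_{B^n(0,R)}|G_j|$ by a constant depending only on $n$, $k$, and $R$. Granting this, a subsequence of $(G_j)$ converges locally uniformly on $\R^n$ to a map $G_\infty\colon\R^n\to(\R^n)^k$, and since a locally uniform limit of $(1+1/j)$-quasiregular $\vol_{(\R^n)^k}^\times$-curves with $1/j\to0$ satisfies \eqref{eq:associated}, the limit $G_\infty$ is a $\vol_{(\R^n)^k}^\times$-calibrated curve; moreover $\sup_{|w|=1}|G_\infty|=1$, so $G_\infty$ is non-constant.

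Now Theorem~\ref{thm:Euclidean-Liouville} applies to $G_\infty$ on $\R^n$ — this is the one place where $n\ge3$ is used — so some coordinate $g_{\infty,i_0}$ of $G_\infty$ is the restriction of a M\"obius transformation of $\bS^n$ and the remaining coordinates are constant. Since $G_\infty$ is finite on all of $\R^n$, this M\"obius transformation fixes $\infty$, hence is a similarity, and $G_\infty(0)=0$ forces $g_{\infty,i_0}(w)=\lambda A w$ for some $\lambda>0$ and orthogonal $A$, while $g_{\infty,i}\equiv0$ for $i\neq i_0$. Thus $|G_\infty(w)|=\lambda|w|$ on $\R^n$, so $G_\infty$ is injective and $H_{G_\infty}(0,r)\equiv1$. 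This contradicts the recorded defect: in the non-injective case, $G_\infty(0)=G_\infty(w_\infty)$ for the limit $w_\infty\in\partial B^n(0,1)$ of the $w_j$, forcing $w_\infty=0$; in the distortion case, either $\inf_{|w|=1}|G_\infty|=0$ and $G_\infty$ vanishes at a point of $\partial B^n(0,1)$ besides $0$, contradicting injectivity, or $\inf_{|w|=1}|G_\infty|>0$ and $H_{G_\infty}(0,1)=\lim_j H_{G_j}(0,1)\ge H>1$, contradicting $H_{G_\infty}(0,1)=1$. Either way we reach a contradiction, which proves the theorem. The main obstacle is the uniform-boundedness step, i.e.\ establishing that the blown-up curves form a normal family; the remaining steps are soft consequences of the cited results.
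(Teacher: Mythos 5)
Your blow-up argument is a genuinely different route from the paper's proof of this theorem. The paper follows Iwaniec's gauge-function scheme: it introduces the gauge $\tau^{n,k}$ measuring the uniform distance of small-distortion curves to the sheaf $\ccF^{n,k}$ of calibrated curves (Proposition~\ref{prop:1-qr-near}), derives from it a Harnack-type growth estimate (Proposition~\ref{prop-iwaniecstabilitylemma3}), and combines the two into a pointwise distortion estimate (Proposition~\ref{prop:iwaniec-distortion}), from which local injectivity and metric quasiconformality drop out directly --- with no compactness argument and no invocation of Liouville's theorem at the level of the proof. Your contradiction argument replaces this with a rescaling to a global calibrated curve on $\R^n$ and an application of Theorem~\ref{thm:Euclidean-Liouville}.

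However, the proposal has a genuine gap precisely at the step you flag as the main obstacle. The claim that $\sup_{B^n(0,R)}|G_j|$ is uniformly bounded ``by chaining outward from the unit ball'' is circular. Lemma~\ref{lem:bounded-energy} together with the Morrey--H\"older argument underlying Theorem~\ref{thm:normal} controls the oscillation of $G_j$ on a ball $B$ by $\diam G_j(2B)$, the image diameter on a strictly \emph{larger} concentric ball; this propagates bounds inward, not outward. To bound $G_j$ on $B^n(0,2)$ you would already need $\diam G_j(B^n(0,4))$, and so on indefinitely, so the chain never closes starting from the unit ball. The ``routine reduction'' to $s_j>0$ is likewise not routine: without an outward Harnack inequality one cannot exclude a priori that a non-constant small-distortion $\vol_{(\R^n)^k}^\times$-curve is constant on some open set, and Section~\ref{sec:IVV} shows this genuinely occurs at large distortion, so it is part of what must be proved. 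Both gaps are closed exactly by Proposition~\ref{prop-iwaniecstabilitylemma3}: once $\tau^{n,k}(1/j)\le10^{-3}$, applying it with $\rho=1/4$ to $B=B^n(0,2^{m+2})$ and iterating gives $\sup_{B^n(0,2^{m+1})}|G_j|\le 2^{12}\sup_{B^n(0,2^m)}|G_j|$, hence $\sup_{B^n(0,2^m)}|G_j|\le 2^{12m}$, and it also forbids locally constant non-constant curves. But that proposition is the engine of the paper's own proof and rests on the whole gauge-function machinery. With it in hand, your normal-family-plus-Liouville endgame does furnish a valid alternative conclusion; without it, the normal family cannot be extracted, and as written the argument does not go through.
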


Our proof follows the proof of Iwaniec \cite{Iwaniec-PAMS-1987} for the Martio--Rickman--V\"ais\"al\"a theorem on the local injectivity of the quasiregular mappings of small distortion; see Martio--Rickman--V\"ais\"al\"a \cite[Theorem 4.6]{MRV-AASF-1971} or Rickman \cite[Theorem VI.8.14]{Rickman-book}.  See also Kopylov \cite{Kopylov-Siberian-1982} for a functional approach to the stability of Liouville's theorem.

We divide the discussion into three parts. For the proof, we introduce first sheaves of quasiregular curves and consider an associated gauge function measuring the distance of quasiregular $\vol_{(\R^n)^k}^\times$-curves to $\vol_{(\R^n)^k}^\times$-calibrated curves. In the second step, we develop a Harnack estimate for quasiregular  $\vol_{(\R^n)^k}^\times$-curves of small distortion. Finally, in the third step, we combine the distance estimate in terms of the gauge function and the Harnack estimate to obtain the local injectivity and metric quasiconformality of curves of small distortion.

\subsection{Sheaves $\ccF_t^{n,k}$ of quasiregular $\vol_{(\R^n)^k}^\times$-curves}

We define the following auxiliary sheaves $\ccF_t^{n,k}$ of quasiregular curves. For a general discussion of sheaves on manifolds; see \cite[Chapter 1.6]{forster-riemann}.  Let $n\ge 3$ and $k\ge 1$. For each $t\ge 0$, let $\ccF_t^{n,k}$ be the sheaf of $(1+t)$-quasiregular $\vol_{(\R^n)^k}^\times$-curves on $\R^n$, that is, for each open set $\Omega \subset \R^n$, let $\ccF_t^{n,k}(\Omega)$ be the family of all $(1+t)$-quasiregular $\vol_{(\R^n)^k}^\times$-curves $\Omega \to (\R^n)^k$. Note that $\ccF_t^{n,k}$ is a sheaf, since a restriction of a $K$-quasiregular $\vol_{(\R^n)^k}^\times$-curve into a subdomain is a $K$-quasiregular $\vol_{(\R^n)^k}^\times$-curve and two $K$-quasiregular $\vol_{(\R^n)^k}^\times$-curves $\Omega \to (\R^n)^k$ and $\Omega'\to (\R^n)^k$, which agree on $\Omega\cap \Omega'$, define a $K$-quasiregular $\vol_{(\R^n)^k}^\times$-curve $\Omega\cup \Omega'\to (\R^n)^k$.

Let also $\ccF^{n,k}$ be the sheaf of $\vol_{(\R^n)^k}^\times$-calibrated curves. Then $\ccF_0^{n,k}=\ccF^{n,k}$. Note that, by Theorem \ref{thm:Euclidean-Liouville}, we have that $\ccF^{n,k}$ consists of constant mappings $\Omega\to (\R^n)^k$ and mappings $\varphi=(\varphi_1,\ldots,\varphi_k) \colon \Omega \to (\R^n)^k$ for which there exists $i_0\in \{1,\ldots,k\}$ so that $\varphi_{i_0} \colon \Omega \to \R^n$ is a restriction of a M\"obius transformation $\bS^n \to \bS^n$ and each $\varphi_i \colon \Omega \to \R^n$ is a constant map for $i\ne i_0$. Note that, in particular, $\ccF^{n,1}$ is the family of constant mappings $\Omega \to \R^n$ and restrictions of M\"obius transformations $\bS^n \to \bS^n$.

The sheaves $\ccF_t^{n,k}$ are also closed under translation and dilation and, by Theorem \ref{thm:normal}, they are also closed under locally uniform limits. We record these properties as lemmas. The proof follows from basic properties of quasiregular curves and we omit the details.

\begin{lemma}
Let $n\ge 3$ and $k\ge 1$. Let $F\in \ccF_t^{n,k}(\Omega)$, where $t\ge 0$ and $\Omega \subset \R^n$ is a domain. Let $x_0 \in \R^n$, $y_0 \in (\R^n)^k$, $\lambda >0$ and $\mu >0$. Then the map $x\mapsto \lambda F(\mu x + x_0) + y_0$ belongs to $\ccF_t^{n,k}(\widetilde{\Omega})$, where $\widetilde{\Omega} = \{ \frac{1}{\mu}(x-x_0) \colon x\in \Omega\}$.
\end{lemma}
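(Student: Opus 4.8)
The plan is a direct verification from the definition \eqref{eq:QRC} of a quasiregular $\vol_{(\R^n)^k}^\times$-curve, using only that the maps involved are conformal and the elementary behaviour of $\vol_{(\R^n)^k}^\times$ under dilations of the target. Write $\phi\colon \widetilde{\Omega}\to \Omega$, $\phi(x)=\mu x + x_0$, for the affine change of variables in the source, $\sigma_\lambda\colon (\R^n)^k\to (\R^n)^k$, $\sigma_\lambda(y)=\lambda y$, for the dilation of the target, and $\tau_{y_0}\colon (\R^n)^k\to (\R^n)^k$ for the translation $y\mapsto y+y_0$. Since $\phi^{-1}(y)=\frac{1}{\mu}(y-x_0)$, the set $\widetilde{\Omega}=\phi^{-1}(\Omega)$ is again a domain, and the map in question is $G=\tau_{y_0}\circ\sigma_\lambda\circ F\circ\phi\colon\widetilde{\Omega}\to (\R^n)^k$. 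Continuity of $G$ and membership in $W^{1,n}_\loc(\widetilde{\Omega},(\R^n)^k)$ are immediate, since precomposition with a diffeomorphism and postcomposition with an affine map preserve both.

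Next I would compute the two sides of \eqref{eq:QRC} for $G$. From $DG(x)=\lambda\mu\, DF(\phi(x))$ and homogeneity of the operator norm we get $\norm{DG(x)}^n=(\lambda\mu)^n\norm{DF(\phi(x))}^n$. For the right-hand side, $\tau_{y_0}^*\vol_{(\R^n)^k}^\times=\vol_{(\R^n)^k}^\times$ by translation invariance, and $\sigma_\lambda^*\vol_{(\R^n)^k}^\times=\lambda^n\vol_{(\R^n)^k}^\times$, since $\pi_i\circ\sigma_\lambda$ is the dilation $y\mapsto\lambda y$ on $\R^n$ and hence $\sigma_\lambda^*\pi_i^*\vol_{\R^n}=\lambda^n\pi_i^*\vol_{\R^n}$ for each $i\in\{1,\dots,k\}$. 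Therefore $G^*\vol_{(\R^n)^k}^\times=\lambda^n\,\phi^*\!\left(F^*\vol_{(\R^n)^k}^\times\right)$, and since $F^*\vol_{(\R^n)^k}^\times$ is an $n$-form on the open subset $\Omega$ of $\R^n$ and $\phi$ has constant Jacobian $\mu^n$, pulling back gives $\star G^*\vol_{(\R^n)^k}^\times=(\lambda\mu)^n\left((\star F^*\vol_{(\R^n)^k}^\times)\circ\phi\right)$.

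Finally I would combine these. Since $\vol_{(\R^n)^k}^\times$ is a calibration, $\norm{\vol_{(\R^n)^k}^\times}\equiv 1$ by Lemma \ref{lemma:linear-rigidity}, so the defining inequality \eqref{eq:QRC} for $F$ with constant $1+t$ reads $\norm{DF}^n\le (1+t)\,(\star F^*\vol_{(\R^n)^k}^\times)$ a.e.\ in $\Omega$. Composing this inequality with $\phi$, which maps null sets to null sets, and multiplying through by the positive constant $(\lambda\mu)^n$, yields precisely $\norm{DG}^n\le (1+t)\,(\star G^*\vol_{(\R^n)^k}^\times)$ a.e.\ in $\widetilde{\Omega}$, that is, $G\in\ccF_t^{n,k}(\widetilde{\Omega})$. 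There is no genuine obstacle; the only points deserving a word are that $\phi$ preserves null sets (so the a.e.\ inequality transfers) and the scaling identity $\sigma_\lambda^*\vol_{(\R^n)^k}^\times=\lambda^n\vol_{(\R^n)^k}^\times$, both of which are elementary.
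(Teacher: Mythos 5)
Your proof is correct and is exactly the direct verification the paper has in mind: the authors omit the proof, remarking only that it ``follows from basic properties of quasiregular curves,'' and your computation supplies precisely those details. The chain rule giving $DG(x)=\lambda\mu\,DF(\phi(x))$, the scaling $\sigma_\lambda^*\vol_{(\R^n)^k}^\times=\lambda^n\vol_{(\R^n)^k}^\times$, the factor $\mu^n$ from $\phi^*\vol_{\R^n}$, and the cancellation of $(\lambda\mu)^n$ on both sides of the distortion inequality are all right, and you correctly invoke Lemma~\ref{lemma:linear-rigidity} to justify dropping the comass factor $\norm{\omega}\circ F$ in \eqref{eq:QRC}. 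The remarks on Sobolev regularity and on $\phi$ preserving null sets, while routine, close the argument cleanly.
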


\begin{lemma}
\label{lemma:**}
Let $n\ge 3$, $k\ge 1$, and let $\Omega\subset \R^n$ be a domain. Then, for a sequence $t_i \searrow 0$ and a bounded sequence $(F_i)_{i\in \N}$ of mappings $F_i \in \ccF_{t_i}^{n,k}(\Omega)$,
there exists a subsequence $(F_{i_j})_{j\in \N}$ and a mapping $F\in \ccF^{n,k}(\Omega)$ for which $F_{i_j}\to F$ locally uniformly in $\Omega$.
\end{lemma}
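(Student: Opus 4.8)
}.**

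The plan is to deduce this essentially verbatim from Theorem~\ref{thm:normal}, the only extra point being the identification of the limit as a genuine $\vol_{(\R^n)^k}^\times$-calibrated curve (i.e.\ a member of $\ccF^{n,k}(\Omega)$ rather than merely of $\ccF_t^{n,k}(\Omega)$ for some small $t$). First I would recall that $\vol_{(\R^n)^k}^\times$ is a calibration on $(\R^n)^k$ with constant coefficients by Lemma~\ref{lemma:linear-rigidity}, so Theorem~\ref{thm:normal} applies with $\omega = \vol_{(\R^n)^k}^\times$ and $m = nk$. Each $F_i$ is a $(1+t_i)$-quasiregular $\vol_{(\R^n)^k}^\times$-curve, hence in particular a $(1+t_1)$-quasiregular $\vol_{(\R^n)^k}^\times$-curve (since $t_i \searrow 0$, so $1+t_i \le 1+t_1$ for all $i$), and the sequence $(F_i)$ is bounded by hypothesis. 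Thus Theorem~\ref{thm:normal} gives a subsequence $(F_{i_j})$ converging locally uniformly in $\Omega$ to some map $F\colon\Omega\to(\R^n)^k$.

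The remaining point is that $F$ is $\vol_{(\R^n)^k}^\times$-\emph{calibrated}, i.e.\ $1$-quasiregular, not just $(1+t_1)$-quasiregular. Here I would use the $C^0$-closedness of the defining partial differential relation: by \cite[Theorem~1.9]{Pankka-AASF-2020}, a locally uniform limit of $K_j$-quasiregular $\omega$-curves with $K_j \to K$ is a $K$-quasiregular $\omega$-curve. Since $F_{i_j}$ is $(1+t_{i_j})$-quasiregular and $1+t_{i_j}\to 1$, the limit $F$ is a $1$-quasiregular $\vol_{(\R^n)^k}^\times$-curve, i.e.\ $F\in\ccF^{n,k}(\Omega) = \ccF_0^{n,k}(\Omega)$, as claimed. (Alternatively, one could argue directly: for any fixed $\varepsilon>0$ the tail of the sequence consists of $(1+\varepsilon)$-quasiregular curves, so by Theorem~\ref{thm:normal} the limit is $(1+\varepsilon)$-quasiregular; letting $\varepsilon\to 0$ forces $F$ to be $1$-quasiregular.)

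I do not anticipate any real obstacle here: the normal-family content is entirely carried by Theorem~\ref{thm:normal} (which in turn rests on the uniform H\"older estimate from \cite{Onninen-Pankka-arXiv-2020} and the Arzel\`a--Ascoli theorem), and the upgrade of the dilatation bound from $1+t_i$ to $1$ in the limit is a routine consequence of the $C^0$-closedness of the quasiregular-curve relation. The one thing to be careful about is that Theorem~\ref{thm:normal} is stated for curves into $\R^m$, so one should note explicitly that $(\R^n)^k \cong \R^{nk}$ and that $\vol_{(\R^n)^k}^\times$ has constant coefficients as an element of $\wedge^n\R^{nk}$; after that the deduction is immediate, and the details are omitted as indicated.
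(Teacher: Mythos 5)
Your proof is correct and matches what the paper intends: the paper explicitly omits the proof as following from Theorem~\ref{thm:normal} and basic properties of quasiregular curves, and your write-up fills in exactly that: apply Theorem~\ref{thm:normal} with $K=1+t_1$ to get a locally uniform sublimit $F$, then use the $C^0$-closedness from \cite[Theorem~1.9]{Pankka-AASF-2020} (together with $t_{i_j}\to 0$ and the pointwise bound $\star F^*\vol_{(\R^n)^k}^\times \le \norm{DF}^n$ coming from the comass being $1$) to conclude $F$ is $\vol_{(\R^n)^k}^\times$-calibrated, i.e.\ $F\in\ccF^{n,k}(\Omega)$.
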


Having sheaves $\ccF_t^{n,k}$ at our disposal, we may now define, as in \cite{Iwaniec-PAMS-1987}, an auxiliary gauge function $\tau^{n,k} \colon [0,\infty) \to [0,\infty)$ by setting
\[
\tau^{n,k}(t) = \sup_{\substack{0\le s\le t \\ F\in \ccF_s^{n,k}(B^n) \\ \sup_B |F| \le 1}} \min_{\substack{\varphi \in \ccF^{n,k}(B^n) \\ \varphi(0)=F(0)}} d(F,\varphi)
\]
for $t\ge 0$, where
\[
d(F,\varphi)=\sup_{x\in B^n} \, (1-|x|) |F(x)-\varphi(x)|.
\]

Note that the minimum in the definition of $\tau^{n,k}$ is well-defined by Lemma \ref{lemma:**}. Also $\lim_{t\to 0} \tau^{n,k}(t)=0$; see \cite{Iwaniec-PAMS-1987} for details.

The gauge function $\tau^{n,k}$ serves several purposes in the proof of Theorem \ref{thm:local-injectivity}. An immediate application is to approximate curves in $\ccF_t^{n,k}$ by calibrated curves. Since the proof of the following proposition is verbatim to the proof of \cite[Lemma 1]{Iwaniec-PAMS-1987}, we merely recall the statement here. 

\begin{proposition}[{\cite[Lemma 1]{Iwaniec-PAMS-1987}}]
\label{prop:1-qr-near}
Let $n\ge 3$ and $k\ge 1$. Let $B=B^n(a,r)\subset \R^n$ be a ball. Let also $t\ge 0$ and $F\in \ccF_t^{n,k}(B)$. Then there exists $\varphi \in \ccF^{n,k}(B)$ for which $\varphi(a)=F(a)$ and
\[
\sup_{\sigma B} |F-\varphi| \le \frac{\tau^{n,k}(t)}{1-\sigma} \sup_B |F|
\]
for every $\sigma \in [0,1)$.
\end{proposition}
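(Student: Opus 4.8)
The plan is to obtain the estimate directly from the definition of the gauge function $\tau^{n,k}$, after an affine change of variables that reduces everything to the normalized situation appearing in that definition; this is the argument of \cite[Lemma 1]{Iwaniec-PAMS-1987}. First I would treat the degenerate cases separately: if $F$ is constant, then $\varphi \equiv F(a)$ lies in $\ccF^{n,k}(B)$ and satisfies the conclusion trivially, and if $\sup_B |F| = \infty$ there is nothing to prove. So from now on I may assume that $F$ is non-constant and that $M := \sup_B |F|$ is finite and positive.

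Next I would normalize $B = B^n(a,r)$ to the unit ball and $F$ to a curve of sup-norm at most one. Define $G \colon B^n \to (\R^n)^k$ by $G(x) = M^{-1} F(a + r x)$. By the translation--dilation closure of the sheaves $\ccF_t^{n,k}$ established above (applied with $\lambda = M^{-1}$, $\mu = r$, $x_0 = a$, $y_0 = 0$) we have $G \in \ccF_t^{n,k}(B^n)$, and by construction $\sup_{B^n}|G| \le 1$, so $G$ is one of the maps competing in the supremum defining $\tau^{n,k}(t)$ (with parameter $s = t$). Since, by Lemma \ref{lemma:**}, the minimum over calibrated competitors in that definition is attained, there is $\psi \in \ccF^{n,k}(B^n)$ with $\psi(0) = G(0)$ and
\[
d(G,\psi) = \sup_{x \in B^n} (1-|x|)\,|G(x) - \psi(x)| \le \tau^{n,k}(t).
\]

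Finally I would transport $\psi$ back to $B$: set $\varphi(y) = M\,\psi\big((y-a)/r\big)$ for $y \in B$, which again lies in $\ccF^{n,k}(B)$ by the same closure property and satisfies $\varphi(a) = M\,\psi(0) = M\,G(0) = F(a)$. For $y \in \sigma B = B^n(a,\sigma r)$ put $x = (y-a)/r$, so that $|x| < \sigma$ and $F(y) - \varphi(y) = M\big(G(x) - \psi(x)\big)$; hence
\[
|F(y) - \varphi(y)| = \frac{M}{1-|x|}\,\Big[(1-|x|)\,|G(x) - \psi(x)|\Big] \le \frac{M}{1-|x|}\,\tau^{n,k}(t) \le \frac{\tau^{n,k}(t)}{1-\sigma}\,M,
\]
using $1 - |x| > 1 - \sigma > 0$. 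Taking the supremum over $y \in \sigma B$ gives $\sup_{\sigma B}|F - \varphi| \le \frac{\tau^{n,k}(t)}{1-\sigma}\sup_B|F|$, which is the claim. I do not expect a genuine obstacle here: all the substance — the explicit classification of $\ccF^{n,k}$ in Theorem \ref{thm:Euclidean-Liouville}, the normal-family input that makes the minimum in the definition of $\tau^{n,k}$ attained (Lemma \ref{lemma:**}), and the affine invariance of the sheaves — is already available, so the only care needed is the bookkeeping of the factor $M$ and of the weight $(1-|x|)$ against $(1-\sigma)$ on $\sigma B$.
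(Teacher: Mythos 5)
Your proof is correct and is essentially the argument the paper invokes: the paper itself gives no proof, stating only that the argument is ``verbatim to the proof of \cite[Lemma 1]{Iwaniec-PAMS-1987},'' and what you write out is precisely that argument---normalize via the translation--dilation closure lemma to a competitor $G\in\ccF_t^{n,k}(B^n)$ with $\sup_{B^n}|G|\le 1$, use the attainment of the minimum in the definition of $\tau^{n,k}$ (justified by Lemma \ref{lemma:**} and the classification of $\ccF^{n,k}$), and transport the minimizer back to $B$ with the $(1-|x|)$ vs.\ $(1-\sigma)$ bookkeeping. The only cosmetic remark is that your dispatch of the case $\sup_B|F|=\infty$ silently uses $\tau^{n,k}(t)>0$ so that the right-hand side is $+\infty$; when $\tau^{n,k}(t)=0$ one has $t=0$ and $F\in\ccF^{n,k}(B)$, so $\varphi=F$ works, which closes that corner cleanly.
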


\subsection{Harnack estimate}

The following Harnack estimate for curves of small distortion, in terms the gauge function $\tau^{n,k}$, is the basis of the forthcoming distortion estimates.

\begin{proposition}
\label{prop-iwaniecstabilitylemma3}
Let $n\ge 3$ and $k\ge 1$. Let $\Omega \subset \R^n$ be a domain. Let also $t\ge 0$ and $F\in \ccF_t^{n,k}(\Omega)$. Suppose that $\tau^{n,k}(t)\le 10^{-3}$. Then
\[
\sup_{\frac{1}{2} B} |F| \le 2^\frac{4-4\rho}{\rho} \sup_{\rho B} |F|
\]
for every ball $B\subset \Omega$ and every $0<\rho \le \frac{1}{2}$.
\end{proposition}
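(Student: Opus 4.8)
The plan is to follow Iwaniec's argument for the corresponding Harnack estimate in the stability theory of Liouville's theorem (\cite{Iwaniec-PAMS-1987}), with the role of Möbius transformations now played by the model maps of Theorem \ref{thm:Euclidean-Liouville}: a $\vol_{(\R^n)^k}^\times$-calibrated curve $\varphi=(\varphi_1,\ldots,\varphi_k)$ is either constant or has a single non-constant coordinate $\varphi_{i_0}$, which is a restriction of a Möbius transformation, the other coordinates being constant. First I would reduce to $B=B^n(0,1)$: by the translation and scaling invariance of the sheaves $\ccF_t^{n,k}$, and since the asserted inequality is invariant under $x\mapsto \lambda^{-1}(x-x_0)$ in the source and $y\mapsto \mu^{-1}y$ in the target, it suffices to treat $F\in \ccF_t^{n,k}(\Omega)$ with $B^n\subset\Omega$. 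Here $\overline{\tfrac34 B^n}$ is a compact subset of $\Omega$, so $\sup_{\tfrac34 B^n}|F|<\infty$ even though $\sup_{B^n}|F|$ may be infinite, and all balls appearing below are kept inside $\tfrac34 B^n$. One may assume $F$ non-constant and $\sup_{\rho B^n}|F|>0$, the degenerate case being disposed of separately.

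The next ingredient is an elementary estimate for the model maps: every $\varphi\in\ccF^{n,k}(B^n)$ satisfies the asserted inequality and, more usefully, a local doubling estimate $\sup_{2B'}|\varphi|\le \Lambda\sup_{B'}|\varphi|$ with an absolute constant $\Lambda\ge 1$ for every pair of concentric balls $B'\subset 2B'\subset\tfrac34 B^n$. This is trivial for constant $\varphi$, and otherwise reduces to the single Möbius coordinate $\varphi_{i_0}$; being finite on $B^n$, the transformation $\varphi_{i_0}$ is conformal on $\tfrac34 B^n$ with conformal distortion bounded by an absolute constant, so the estimates follow by comparing $\varphi_{i_0}$ and $\varphi_{i_0}^{-1}$ with their linearizations on the relevant balls (the worst case being, essentially, a linear map, for which the ratio $\sup_{\tfrac12 B^n}|\varphi|/\sup_{\rho B^n}|\varphi|$ is of order $1/\rho$).

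The heart of the proof is to transfer the model estimate to $F$ by a chain of balls along dyadic scales. For a scale $s$ with $8sB^n\subset\tfrac34 B^n$, Proposition \ref{prop:1-qr-near} applied on $B^n(0,8s)$ with a small fixed $\sigma$ gives a model map $\varphi_s\in\ccF^{n,k}(B^n(0,8s))$ with $\varphi_s(0)=F(0)$ and $\sup_{2sB^n}|F-\varphi_s|\le C\tau^{n,k}(t)\sup_{8sB^n}|F|\le C\cdot 10^{-3}\sup_{\tfrac34 B^n}|F|$; combined with the model doubling for $\varphi_s$ on concentric balls inside $B^n(0,8s)$ this yields
\[
\sup_{2sB^n}|F|\le \Lambda\sup_{sB^n}|F|+C_1\tau^{n,k}(t)\sup_{8sB^n}|F|.
\]
Iterating this from $s=\rho$ through the dyadic scales up to $\tfrac12$, and handling the bounded remaining range of radii crudely (where both sides are comparable, so any absolute constant suffices), one is left with a constant of the shape $\Lambda^{J}\cdot(\text{correction})$ with $J\le\log_2(1/(2\rho))$, i.e.\ a constant polynomial in $1/\rho$, which is to be checked to lie under $2^{(4-4\rho)/\rho}$.

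The main obstacle is precisely the error term in the displayed recursion: it is governed by $\sup_{8sB^n}|F|$, which lies on a ball coarser than $2sB^n$, so the recursion couples each scale to coarser ones and is not literally self-contained — and one cannot cure this by shrinking $\sigma$, since the error produced by Proposition \ref{prop:1-qr-near} is of size $\sim\tau^{n,k}(t)\sup_{B^n(0,8s)}|F|$ regardless. This is exactly where the hypothesis $\tau^{n,k}(t)\le 10^{-3}$ is used: once $\tau^{n,k}(t)$ is below an absolute threshold fixed by $\Lambda$ and $C_1$, the coupling terms form a geometrically summable correction, and a continuity argument in the radial parameter packages this cleanly — one shows that the set of $s\in[\rho,\tfrac78]$ for which $\sup_{sB^n}|F|\le C(s)\sup_{\rho B^n}|F|$ holds (for a suitable increasing $C$ with $C(\rho)=1$ and $C(\tfrac12)\le 2^{(4-4\rho)/\rho}$) is non-empty, open and closed. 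Since $2^{(4-4\rho)/\rho}$ exceeds the sharp model ratio, which is only of order $1/\rho$, by a factor growing like $2^{4/\rho}$ as $\rho\to 0$, there is abundant room for the polynomial factor $\Lambda^{J}$ and for these corrections, and it is precisely this comparison that pins down the admissible threshold (here $10^{-3}$).
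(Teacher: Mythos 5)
Your proposal is the same Iwaniec-style strategy the paper uses: reduce to $B=B^n$ by scale/translation invariance, exploit the rigid form of $\vol_{(\R^n)^k}^\times$-calibrated curves from Theorem \ref{thm:Euclidean-Liouville} to get a model doubling estimate, transfer it to $F$ via Proposition \ref{prop:1-qr-near}, and iterate. The paper packages the transfer step as Proposition \ref{prop:triangle-ineq} (with explicit constants $8$ and $19$ and radii $\tfrac45 B, \tfrac25 B, B$), and then simply cites Iwaniec's Lemmas 2 and 3 for the iteration; your recursion $\sup_{2sB^n}|F|\le \Lambda\sup_{sB^n}|F|+C_1\tau^{n,k}(t)\sup_{8sB^n}|F|$ is structurally the same, and you correctly identify the coupling problem — the error term sits on a larger ball — and correctly locate where $\tau^{n,k}(t)\le 10^{-3}$ enters.

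The one place where your write-up does not close the argument is the final "non-empty, open, and closed" continuity argument in the radial parameter. Closedness and non-emptiness are fine, but openness does not follow from continuity: the recursion bounds $h(s)$ for a given $s$ in terms of $h$ at both a \emph{smaller} and a \emph{larger} radius, so you cannot propagate the inequality from $[\rho,s_0]$ to $s_0+\delta$ without already controlling $h$ beyond $s_0+\delta$. This is precisely the coupling you flagged, but the continuity packaging does not actually break it. The standard way out — the content of Iwaniec's Lemma 2, which the paper invokes — is a fixed-point/absorption argument: one first shows that the quantity $\lambda_0=\sup_s h(\tfrac45 s)/h(\tfrac25 s)$ is finite, then feeds the recursion back into itself (using monotonicity, $h(\tfrac12 s)\le h(\tfrac45 s)$) to obtain an inequality of the form $\lambda_0(1-c\tau\lambda_0)\le 8$, from which $\lambda_0\le 16$ once $\tau$ is below a threshold such as $10^{-3}$. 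This uniform doubling bound, rather than a connectedness argument, is what then iterates cleanly to $\sup_{\frac12 B}|F|\le 2^{(4-4\rho)/\rho}\sup_{\rho B}|F|$. If you replace the open/closed step with this absorption scheme, your proof matches the paper's (and Iwaniec's) argument essentially line by line.

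One smaller remark: the bound $\sup_{2sB^n}|F-\varphi_s|\le C\cdot 10^{-3}\sup_{\frac34 B^n}|F|$ that you write is too crude to be iterated, since the error then stays a fixed fraction of the (possibly huge) global supremum rather than of the local value; one must keep the error as $C\tau\sup_{8sB^n}|F|$ and control $\sup_{8sB^n}|F|$ recursively via the same doubling estimate, which is exactly what the absorption step accomplishes.
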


Since the proof of Proposition \ref{prop-iwaniecstabilitylemma3} is based on the analysis of the norm of the mapping $F$, that is, the function $x\mapsto |F(x)|$, the proof is essentially the same as the proof of \cite[Lemma 3]{Iwaniec-PAMS-1987}. For this reason, we merely discuss briefly the proof of \cite[Proposition 2]{Iwaniec-PAMS-1987}, where the particular structure of $\vol_{(\R^n)^k}^\times$-calibrated curves have a role, and omit other details; see also \cite[Lemmas 2 and 3]{Iwaniec-PAMS-1987}. The following proposition is a restatement of \cite[Proposition 2]{Iwaniec-PAMS-1987}. 

\begin{proposition}
\label{prop:triangle-ineq}
Let $n\ge 3$, $k\ge 1$, and let $\Omega \subset \R^n$ be a domain. Let also $t\ge 0$ and $F\in \ccF_t^{n,k}(\Omega)$. Then
\[
\sup_{\frac{4}{5} B} |F| \le 8\sup_{\frac{2}{5} B} |F| + 19\tau^{n,k}(t)\sup_B |F|
\]
for every ball $B\subset \Omega$.
\end{proposition}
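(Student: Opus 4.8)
The plan is to follow the strategy of \cite[Proposition 2]{Iwaniec-PAMS-1987}, adapting it to the structure of $\vol_{(\R^n)^k}^\times$-calibrated curves supplied by Theorem \ref{thm:Euclidean-Liouville}. After rescaling and translating, we may assume $B=B^n$ and work on the unit ball. Apply Proposition \ref{prop:1-qr-near} with $a=0$ to obtain a calibrated curve $\varphi\in\ccF^{n,k}(B^n)$ with $\varphi(0)=F(0)$ and
\[
\sup_{\sigma B^n}|F-\varphi|\le \frac{\tau^{n,k}(t)}{1-\sigma}\sup_{B^n}|F|
\]
for every $\sigma\in[0,1)$. Taking $\sigma=\tfrac{9}{10}$ (say) gives a bound of the form $\sup_{\frac{9}{10}B^n}|F-\varphi|\le 10\,\tau^{n,k}(t)\sup_{B^n}|F|$, so on $\frac{4}{5}B^n$ we have, by the triangle inequality,
\[
\sup_{\frac{4}{5}B^n}|F|\le \sup_{\frac{9}{10}B^n}|\varphi| + 10\,\tau^{n,k}(t)\sup_{B^n}|F|.
\]
Thus the whole problem reduces to a \emph{triangle-type inequality for the calibrated curve $\varphi$ itself}: one must show $\sup_{\frac{9}{10}B^n}|\varphi|\lesssim \sup_{\frac{2}{5}B^n}|\varphi|$ with an explicit constant, and then control $\sup_{\frac{2}{5}B^n}|\varphi|$ by $\sup_{\frac{2}{5}B^n}|F|+(\text{error})$ using Proposition \ref{prop:1-qr-near} again on the smaller ball.

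The key step is therefore the estimate for $\varphi$, and here is where the structure of $\ccF^{n,k}$ enters. By Theorem \ref{thm:Euclidean-Liouville}, either $\varphi$ is constant — in which case $|\varphi|$ is constant and the inequality is trivial — or $\varphi=(\varphi_1,\ldots,\varphi_k)$ with a single non-constant coordinate $\varphi_{i_0}$ which is the restriction of a M\"obius transformation $\bS^n\to\bS^n$, and the remaining coordinates constant vectors $c_i$. Since $|\varphi|^2=|\varphi_{i_0}|^2+\sum_{i\ne i_0}|c_i|^2$, bounding $\sup|\varphi|$ on concentric balls amounts to the same for a single M\"obius map. This is the place where one invokes the explicit Harnack-type control for M\"obius transformations used by Iwaniec: a M\"obius map $\psi\colon\bS^n\to\bS^n$ either has a pole far from $B^n$ (so $\psi$ is comparable to an affine conformal map on $B^n$, where $\sup_{\frac{9}{10}B^n}|\psi|\le C\sup_{\frac{2}{5}B^n}|\psi|$ by linearity) or has a pole near or inside $B^n$ (where one uses the precise form $\psi(x)=b+r^2\frac{x-a}{|x-a|^2}$ and a direct comparison of $|x-a|$ on the two balls). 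Carrying out this case analysis with the specific radii $\tfrac25,\tfrac45,\tfrac{9}{10},1$ yields the numerical constants $8$ and $19$ in the statement (these are Iwaniec's constants, inherited essentially unchanged once the reduction to one M\"obius coordinate is made).

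I expect the main obstacle to be \emph{bookkeeping of the constants} rather than any conceptual difficulty: one must run Proposition \ref{prop:1-qr-near} twice (once to pass from $F$ to $\varphi$ on a large ball, once to pass back from $\varphi$ to $F$ on the ball $\frac{2}{5}B^n$), and at each application the factor $\frac{1}{1-\sigma}$ and the choice of intermediate radii must be tuned so that all error terms collapse into the single coefficient $19\,\tau^{n,k}(t)$, and the main term into $8\sup_{\frac{2}{5}B^n}|F|$. A secondary subtlety is that $\varphi$ depends on the ball $B$ chosen in Proposition \ref{prop:1-qr-near}, so one must be careful that the two applications (on $B^n$ and on $\frac{2}{5}B^n$) produce \emph{a priori different} calibrated approximants, and the triangle inequality is used to glue the estimates; since both approximants are close to $F$ on the overlap, this causes no real trouble but must be stated cleanly. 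Apart from these points the argument is verbatim that of \cite[Proposition 2]{Iwaniec-PAMS-1987}, with the Liouville classification of Theorem \ref{thm:Euclidean-Liouville} playing the role that the classical Liouville theorem plays in the mapping case, which is why we only sketch it here.
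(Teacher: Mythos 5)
Your approach is essentially the paper's: pass to a calibrated approximant $\varphi$ via Proposition \ref{prop:1-qr-near}, use the Liouville classification (Theorem \ref{thm:Euclidean-Liouville}) to reduce the Harnack-type bound for $\varphi$ to the case of a single M\"obius coordinate, and invoke Iwaniec's estimate $\sup_{\frac{4}{5}B}|\varphi|\le 8\sup_{\frac{2}{5}B}|\varphi|$ for that coordinate. The key reduction and the source of the constants are correctly identified.

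The one place where you deviate, unnecessarily, is the ``two approximants'' issue you flag as a secondary subtlety. You need not apply Proposition \ref{prop:1-qr-near} a second time on the ball $\frac{2}{5}B$, and hence there are never two different calibrated approximants to glue. Proposition \ref{prop:1-qr-near} produces a \emph{single} $\varphi\in\ccF^{n,k}(B)$ with the uniform bound $\sup_{\sigma B}|F-\varphi|\le \frac{\tau^{n,k}(t)}{1-\sigma}\sup_B|F|$ holding simultaneously for all $\sigma\in[0,1)$. The paper simply evaluates this same estimate twice, at $\sigma=\frac{4}{5}$ (giving the factor $5$) and then at $\sigma=\frac{2}{5}$ inside the inequality $8\sup_{\frac{2}{5}B}|\varphi|\le 8\sup_{\frac{2}{5}B}|F-\varphi|+8\sup_{\frac{2}{5}B}|F|$ (giving $8\cdot\frac{5}{3}\le 14$), and $5+14=19$. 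Relatedly, your choice of intermediate radius $\sigma=\frac{9}{10}$ is suboptimal: it inflates the first error term to $10\tau^{n,k}(t)$ and pushes the M\"obius Harnack estimate to the pair of radii $\frac{9}{10}$ and $\frac{2}{5}$, which degrades the constant $8$ (the M\"obius bound deteriorates as the outer radius approaches $1$, since the pole may be arbitrarily close to $\partial B^n$). With $\sigma=\frac{4}{5}$ the constants come out as stated. These are, as you say, bookkeeping issues, but the single-approximant observation is worth internalizing since it streamlines the argument considerably.
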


\begin{proof}
Let $B\subset \Omega$ be a ball. By Proposition \ref{prop:1-qr-near}, there exists $\varphi \in \ccF^{n,k}(B)$ for which
\[
\sup_{\sigma B} |F-\varphi|\le \frac{\tau^{n,k}(t)}{1-\sigma} \sup_B |F|
\]
for every $\sigma \in [0,1)$. Then, by the triangle inequality, we have the estimate
\[
\sup_{\frac{4}{5} B} |F|\le \sup_{\frac{4}{5} B} |F-\varphi| + \sup_{\frac{4}{5} B} |\varphi| \le 5\tau^{n,k}(t)\sup_B |F| + \sup_{\frac{4}{5} B} |\varphi|.
\]
Since $\varphi=(\varphi_1,\ldots,\varphi_k) \colon \Omega \to (\R^n)^k$, where either $\varphi$ is constant or exactly one coordinate map $\varphi_i$ is a restriction of a M\"obius transformation $\bS^n \to \bS^n$ and the other coordinate maps $\varphi_j$ are constant, we have that
\[
\sup_{\frac{4}{5} B} |\varphi| \le 8\sup_{\frac{2}{5} B} |\varphi|;
\]
see the proof of \cite[Proposition 2]{Iwaniec-PAMS-1987} for details. Then
\[
8\sup_{\frac{2}{5} B} |\varphi| \le 8\sup_{\frac{2}{5} B} |F-\varphi| + 8\sup_{\frac{2}{5} B} |F| \le 14\tau^{n,k}(t)\sup_B |F| + 8\sup_{\frac{2}{5} B} |F|
\]
and the claim follows.
\end{proof}

\subsection{Local injectivity and metric quasiconformality}

Theorem \ref{thm:local-injectivity} follows almost immediately from the following local distortion estimate.

\begin{proposition}
\label{prop:iwaniec-distortion}
Let $n\ge 3$, $k\ge 1$, and $\delta >0$. There exists $\varepsilon=\varepsilon(n,k,\delta) >0$ for the following. Let $B=B^n(a,r)\subset \R^n$ be a ball and let $F\colon B\to (\R^n)^k$ be a $(1+\varepsilon)$-quasiregular $\vol_{(\R^n)^k}^\times$-curve. Then
\[
\sup_{x\in \rho B} |F(x)-F(a)| \le \left( \frac{1+2\rho}{1-2\rho} + 2^\frac{4}{\rho} \delta \right) \min_{|x-a|=\rho r} |F(x)-F(a)|
\]
for every $0<\rho \le \frac{1}{4}$.
\end{proposition}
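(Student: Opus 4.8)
The plan is to adapt Iwaniec's proof of the stability of Liouville's theorem \cite{Iwaniec-PAMS-1987}: one approximates $F$ on a suitable concentric ball by a $\vol_{(\R^n)^k}^\times$-calibrated curve via Proposition \ref{prop:1-qr-near}, extracts from the rigid structure of such curves (Theorem \ref{thm:Euclidean-Liouville}) a M\"obius-type distortion factor, and then absorbs the approximation error using the Harnack estimate of Proposition \ref{prop-iwaniecstabilitylemma3}. We may assume $F(a)=0$ (translating the target preserves $\ccF_\varepsilon^{n,k}$) and $\delta\le 1$. The elementary ingredient is the following consequence of Theorem \ref{thm:Euclidean-Liouville}: if $\varphi\in\ccF^{n,k}(B^n(a,R))$ satisfies $\varphi(a)=0$, then either $\varphi\equiv 0$, or exactly one coordinate $\varphi_{i_0}$ is the restriction of a M\"obius transformation whose pole lies at distance at least $R$ from $a$ while the remaining coordinates vanish identically; in the latter case a direct computation with the standard form of a M\"obius transformation shows that $x\mapsto|\varphi(x)|$ is non-decreasing along every ray issuing from $a$, that $\min_{|x-a|=s}|\varphi|>0$ for $0<s<R$, and that
\begin{equation*}
\sup_{|x-a|\le s}|\varphi|\;=\;\sup_{|x-a|=s}|\varphi|\;\le\;\frac{R+s}{R-s}\,\min_{|x-a|=s}|\varphi|,\qquad 0<s<R.
\end{equation*}
Fix $\varepsilon=\varepsilon(n,k,\delta)>0$ so small that $\tau:=\tau^{n,k}(\varepsilon)\le 2^{-18}\delta$; in particular $\tau\le 10^{-3}$, so Proposition \ref{prop-iwaniecstabilitylemma3} applies to $F$ (used for concentric balls strictly inside $\Omega=B$ and then passing to the limit). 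Write $S_\rho=\sup_{\rho B}|F|$ and $m_\rho=\min_{|x-a|=\rho r}|F|$; the goal is $S_\rho\le\bigl(\tfrac{1+2\rho}{1-2\rho}+2^{4/\rho}\delta\bigr)m_\rho$.

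\emph{Case $2^{4/\rho}\delta\ge 3$.} Apply Proposition \ref{prop:1-qr-near} on $2\rho B$ (legitimate since $2\rho B\subset B$), giving $\varphi\in\ccF^{n,k}(2\rho B)$ with $\varphi(a)=0$ and, since $\rho B=\tfrac{1}{2}(2\rho B)$, $\sup_{\rho B}|F-\varphi|\le 2\tau\sup_{2\rho B}|F|$; moreover $\sup_{2\rho B}|F|\le 2^{12}S_\rho$ by Proposition \ref{prop-iwaniecstabilitylemma3} (applied to the ball $4\rho B$ with parameter $\tfrac{1}{4}$). If $\varphi\equiv 0$ these two bounds force $S_\rho=0$ and the claim is trivial. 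Otherwise the M\"obius estimate above with $R=2\rho r$ and $s=\rho r$ gives $\sup_{\rho B}|\varphi|\le 3\min_{|x-a|=\rho r}|\varphi|$, and two applications of the triangle inequality together with the Harnack bound yield $S_\rho\le 3m_\rho+8\tau\sup_{2\rho B}|F|\le 3m_\rho+2^{15}\tau\,S_\rho$. Since $2^{15}\tau\le\tfrac{1}{8}$, this gives $S_\rho\le(3+2^{17}\tau)m_\rho\le(3+\delta)m_\rho$; and because $\delta\le 1$ and $2^{4/\rho}\delta\ge 3$, one checks $3+\delta\le 1+2^{4/\rho}\delta\le\tfrac{1+2\rho}{1-2\rho}+2^{4/\rho}\delta$, which is the claim.

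\emph{Case $2^{4/\rho}\delta<3$.} Then $\tau\,2^{4/\rho}<3\cdot 2^{-18}<1$. Apply Proposition \ref{prop:1-qr-near} on $\tfrac{1}{2}B$, giving $\varphi\in\ccF^{n,k}(\tfrac{1}{2}B)$ with $\varphi(a)=0$ and, since $\rho B=(2\rho)\cdot\tfrac{1}{2}B$ with $2\rho\le\tfrac{1}{2}$, $\sup_{\rho B}|F-\varphi|\le\frac{\tau}{1-2\rho}\sup_{\frac{1}{2}B}|F|\le 2\tau\sup_{\frac{1}{2}B}|F|$; moreover $\sup_{\frac{1}{2}B}|F|\le\tfrac{1}{16}2^{4/\rho}S_\rho$ by Proposition \ref{prop-iwaniecstabilitylemma3} (applied to the ball $B$ with parameter $\rho$). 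If $\varphi\equiv 0$ these give $S_\rho\le\tfrac{1}{8}\tau\,2^{4/\rho}S_\rho$, hence $S_\rho=0$. Otherwise the M\"obius estimate above with $R=\tfrac{r}{2}$ and $s=\rho r$ produces the factor $\tfrac{1+2\rho}{1-2\rho}$, and two applications of the triangle inequality together with the Harnack bound give
\begin{equation*}
S_\rho\;\le\;\frac{1+2\rho}{1-2\rho}\,m_\rho+8\tau\sup_{\frac{1}{2}B}|F|\;\le\;\frac{1+2\rho}{1-2\rho}\,m_\rho+\tfrac{1}{2}\,\tau\,2^{4/\rho}\,S_\rho .
\end{equation*}
Since $\tfrac{1}{2}\tau\,2^{4/\rho}<\tfrac{1}{2}$, solving for $S_\rho$ and using $\tfrac{1}{1-u}\le 1+2u$ for $u\le\tfrac{1}{2}$ together with $\tfrac{1+2\rho}{1-2\rho}\le 3$ and $3\tau\le\delta$ gives $S_\rho\le\tfrac{1+2\rho}{1-2\rho}m_\rho+2^{4/\rho}\delta\,m_\rho$. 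This proves Proposition \ref{prop:iwaniec-distortion}.

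\emph{Expected main obstacle.} The delicate point is reconciling the two mechanisms in play. The sharp distortion factor $\tfrac{1+2\rho}{1-2\rho}$ is available only if one approximates on the fixed-proportion ball $\tfrac{1}{2}B$, but then the resulting error $\tau\sup_{\frac{1}{2}B}|F|$ can be dominated by $S_\rho$ only at the price of the Harnack factor $2^{4/\rho}$, and no single choice of $\varepsilon>0$ makes $\tau\,2^{4/\rho}$ small uniformly over $\rho\in(0,\tfrac{1}{4}]$. This is precisely why the statement tolerates the error $2^{4/\rho}\delta$ and why the argument must bifurcate on the size of $2^{4/\rho}\delta$: for small $\rho$ the sharp factor is surrendered and replaced by the crude constant $3$ coming from approximation on $2\rho B$, which nonetheless fits inside the permitted bound. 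Beyond this, the remaining work is the routine bookkeeping of universal constants and the disposal of the degenerate case $\varphi\equiv 0$.
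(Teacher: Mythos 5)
Your argument is correct and, step by step, uses the same raw ingredients as the paper: Proposition \ref{prop:1-qr-near} to produce a nearby calibrated curve $\varphi$, the M\"obius distortion bound $\sup_{|x-a|=s}|\varphi|\le\frac{R+s}{R-s}\min_{|x-a|=s}|\varphi|$ (whose derivation from the pole being outside $B^n(a,R)$ you sketch correctly), and the Harnack estimate of Proposition \ref{prop-iwaniecstabilitylemma3}. The numerology checks out as well: $\sup_{\frac12B}|F|\le 2^{(4-4\rho)/\rho}S_\rho=\tfrac1{16}2^{4/\rho}S_\rho$, the Case~1 bound $S_\rho\le 3m_\rho+2^{15}\tau S_\rho$ with $u:=2^{15}\tau\le\tfrac18$ does give $\tfrac{3}{1-u}\le 3+4u=3+2^{17}\tau\le 3+\delta$, and your verification that $3+\delta\le\frac{1+2\rho}{1-2\rho}+2^{4/\rho}\delta$ when $2^{4/\rho}\delta\ge 3$ and $\delta\le1$ is fine.

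Where you diverge from the paper is in the final assembly, and there the paper is cleaner and your ``expected main obstacle'' note is a bit of a red herring. The paper first isolates two facts as Lemma \ref{lemma:iwaniec-lemma4}: the Iwaniec-type inequality
\[
S_\rho \le \tfrac{1+2\rho}{1-2\rho}\,m_\rho + 2^{(4-\rho)/\rho}\tau\, S_\rho,
\]
obtained (as you do) from approximation on $\tfrac12 B$ plus the Harnack estimate, and the $\rho=\tfrac14$ specialization $S_{1/4}\le 4\,m_{1/4}$ provided $\tau\le 2^{-17}$. In the proof of Proposition \ref{prop:iwaniec-distortion} it then applies the second fact to the scaled ball $B'=4\rho B$, which gives the \emph{uniform} crude bound $S_\rho\le 4m_\rho$ for every $\rho\le\tfrac14$ at once (this corresponds exactly to your Case~1 approximation on $\tfrac12 B'=2\rho B$). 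The key move is then to substitute $S_\rho\le 4m_\rho$ into the right-hand side of the first inequality: the error becomes $4\cdot 2^{(4-\rho)/\rho}\tau\,m_\rho = 2\cdot 2^{4/\rho}\tau\,m_\rho\le 2^{4/\rho}\delta\,m_\rho$ once $\tau\le\delta/2$. This removes the need to solve the linear inequality for $S_\rho$, and therefore it does not matter that no single $\varepsilon$ makes $\tau\,2^{4/\rho}$ small uniformly in $\rho$ --- the error term $2^{4/\rho}\delta\,m_\rho$ is allowed to grow with $1/\rho$, and the allowed growth is exactly what the substitution produces. Your bifurcation on $2^{4/\rho}\delta\gtrless 3$ is a correct workaround, but the paper's substitution shows the case split is unnecessary and yields a single-line conclusion from the two estimates.
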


\begin{proof}[Proof of Theorem \ref{thm:local-injectivity}] 

First, let $\rho=\rho(H) \in (0, \frac{1}{4})$ be such that
\[
\frac{1+2\rho}{1-2\rho} \le \frac{H+1}{2}.
\]
Second, fix $\delta=\delta(\rho,H) \in (0,2^{-16})$ for which 
\[
2^\frac{4}{\rho} \delta \le \frac{H-1}{2}.
\]

Let now $\varepsilon=\varepsilon(n,k,\delta) >0$ be as in Proposition \ref{prop:iwaniec-distortion}. Since $\delta=\delta(\rho,H)$ and $\rho=\rho(H)$, we have that $\varepsilon=\varepsilon(n,k,\delta)=\varepsilon(n,k,H)$.

Let $\Omega \subset \R^n$ be a domain and let $F\colon \Omega \to (\R^n)^k$ be a non-constant $(1+\varepsilon)$-quasiregular $\vol_{(\R^n)^k}^\times$-curve. To obtain that $F$ is a local embedding, it suffices to show that $F$ is locally injective.

Let $B=B^n(x,R)$ be a ball for which $B^n(x,9R) \subset \Omega$. Suppose that there exists distinct points $a,b\in B$ for which $F(a)=F(b)$. Then $B^n(a,4|a-b|)\subset \Omega$. Applying Proposition \ref{prop:iwaniec-distortion} with $\rho=1/4$ yields
\[
\max_{|x-a|\le |b-a|} |F(x)-F(a)| \le 4 \min_{|x-a|=|b-a|} |F(x)-F(a)| = 0.
\]
Thus, $F$ is constant in $B^n(a,|b-a|)$. By repeating the same argument, it follows that $F$ is constant in the whole domain $\Omega$. Hence, $F$ must be injective in $B$. It remains to show that $F$ is $H$-quasiconformal.

Let $a\in \Omega$. Let $r>0$ for which $B^n(a,9\frac{r}{\rho})\subset \Omega$. Then $F$ is injective in $B^n(a,\frac{r}{\rho})$ and $\min_{|x-a|=r} |F(x)-F(a)| >0$. Thus, we obtain the estimate
\[
\frac{\max_{|x-a|=r} |F(x)-F(a)|}{\min_{|x-a|=r} |F(x)-F(a)|} \le \frac{1+2\rho}{1-2\rho} + 2^\frac{4}{\rho} \delta \le H
\]
by applying Proposition \ref{prop:iwaniec-distortion} in the ball $B^n(a,\frac{r}{\rho})$. The claim follows.
\end{proof}

Since the following distortion estimate in terms of the the gauge function $\tau^{n,k}$ is used twice in the proof of Proposition \ref{prop:iwaniec-distortion}, we state it as a separate lemma.

\begin{lemma}
\label{lemma:iwaniec-lemma4}
Let $n\ge 3$, $k\ge 1$, and $t\ge 0$. Let also $B=B^n(a,r)$ be a ball in $\R^n$. Then, for each $F\in \ccF_t^{n,k}(B)$, we have that 
\begin{equation}
\label{eq:iwaniec-lemma4-1}
\left(1- 2^{\frac{4-\rho}{\rho}} \tau^{n,k}(t)\right) \sup_{x\in \rho B} |F(x)-F(a)| \le \frac{1+2\rho}{1-2\rho} \min_{|x-a|=\rho r} |F(x)-F(a)|
\end{equation}
for every $0<\rho \le \frac{1}{4}$. In particular, if $\tau^{n,k}(t)\le 2^{-17}$, then
\begin{equation}
\label{eq:iwaniec-lemma4-2}
\sup_{x\in \frac{1}{4}B} |F(x)-F(a)| \le 4 \min_{|x-a|=\frac{1}{4} r} |F(x)-F(a)|.
\end{equation}
\end{lemma}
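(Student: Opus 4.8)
The plan is to follow the proof of \cite[Lemma 4]{Iwaniec-PAMS-1987}: compare $F$ with a nearby $\vol_{(\R^n)^k}^\times$-calibrated curve by Proposition \ref{prop:1-qr-near}, bound that calibrated curve using Theorem \ref{thm:Euclidean-Liouville}, and absorb the resulting error using the Harnack estimate of Proposition \ref{prop-iwaniecstabilitylemma3}. First I would reduce to a convenient situation. Since $\ccF_t^{n,k}$ is invariant under translations of the target, I may replace $F$ by $F-F(a)$ and assume $F(a)=0$, so that the two sides of \eqref{eq:iwaniec-lemma4-1} become $\sup_{\rho B}|F|$ and $\min_{|x-a|=\rho r}|F|$, both finite since $\overline{\rho B}$ is a compact subset of $B$. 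Writing $c_\rho=2^{(4-\rho)/\rho}\tau^{n,k}(t)$, the inequality \eqref{eq:iwaniec-lemma4-1} is trivial when $c_\rho\ge 1$ (its left side is then $\le 0$ and its right side $\ge 0$), so I may assume $c_\rho<1$; since $0<\rho\le 1/4$ gives $(4-\rho)/\rho=4/\rho-1\ge 15$, this forces $\tau^{n,k}(t)<2^{-15}<10^{-3}$, which is exactly the hypothesis needed to invoke Proposition \ref{prop-iwaniecstabilitylemma3}.

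Next I would produce the comparison curve. Applying Proposition \ref{prop:1-qr-near} on $\tfrac12 B=B^n(a,r/2)$, on which $F$ is bounded, gives $\varphi\in\ccF^{n,k}(\tfrac12 B)$ with $\varphi(a)=0$ and, taking the parameter $\sigma=2\rho\le\tfrac12$ (so that $\sigma\cdot\tfrac12 B=\rho B$),
\[
\eta:=\sup_{\rho B}|F-\varphi|\le\frac{\tau^{n,k}(t)}{1-2\rho}\sup_{\tfrac12 B}|F|.
\]
By Theorem \ref{thm:Euclidean-Liouville}, either $\varphi\equiv 0$ or $|\varphi|=|\psi|$ on $\tfrac12 B$ for a M\"obius transformation $\psi\colon\bS^n\to\bS^n$ with $\psi(a)=0$; writing $\psi$ as a similarity composed with the inversion $x\mapsto(x-p)/|x-p|^2$ at the pole $p=\psi^{-1}(\infty)$ and using $\bigl||u|^{-2}u-|v|^{-2}v\bigr|=|u-v|/(|u|\,|v|)$, one gets $|\psi(x)|=C|x-a|/|x-p|$ for a constant $C>0$. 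Since $\varphi$ is finite on $\tfrac12 B$ the pole $p$ lies outside $\tfrac12 B$, so $|p-a|\ge r/2$ and hence $\rho r/|p-a|\le 2\rho$; monotonicity of $s\mapsto s/(R-s)$ and $s\mapsto(1+s)/(1-s)$ then yields, for $0<\rho\le\tfrac14$,
\[
\sup_{\rho B}|\varphi|\le\frac{1+2\rho}{1-2\rho}\min_{|x-a|=\rho r}|\varphi|
\]
(the displayed bound being trivial if $\varphi\equiv 0$); this is precisely the computation behind the proof of \cite[Proposition 2]{Iwaniec-PAMS-1987} already used in Proposition \ref{prop:triangle-ineq}.

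Finally I would assemble the estimate. Using $|F-\varphi|\le\eta$ on $\overline{\rho B}$,
\[
\sup_{\rho B}|F|\le\sup_{\rho B}|\varphi|+\eta\le\frac{1+2\rho}{1-2\rho}\Bigl(\min_{|x-a|=\rho r}|F|+\eta\Bigr)+\eta=\frac{1+2\rho}{1-2\rho}\min_{|x-a|=\rho r}|F|+\frac{2}{1-2\rho}\eta.
\]
Bounding $\eta$ as above and then applying Proposition \ref{prop-iwaniecstabilitylemma3} with the same $\rho$ gives $\frac{2}{1-2\rho}\eta\le\frac{2\tau^{n,k}(t)}{(1-2\rho)^2}\,2^{(4-4\rho)/\rho}\sup_{\rho B}|F|$; since $\rho\le\tfrac14$ forces $(1-2\rho)^2\ge\tfrac14$ and $2^{(4-4\rho)/\rho}=\tfrac18\,2^{(4-\rho)/\rho}$, this prefactor is at most $2^{(4-\rho)/\rho}\tau^{n,k}(t)=c_\rho$, and rearranging gives $(1-c_\rho)\sup_{\rho B}|F|\le\frac{1+2\rho}{1-2\rho}\min_{|x-a|=\rho r}|F|$, which is \eqref{eq:iwaniec-lemma4-1}. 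For \eqref{eq:iwaniec-lemma4-2} I would take $\rho=\tfrac14$: then $(4-\rho)/\rho=15$, so $\tau^{n,k}(t)\le 2^{-17}$ yields $c_{1/4}=2^{15}\tau^{n,k}(t)\le\tfrac14$ and $\frac{1+2\rho}{1-2\rho}=3$, whence $\tfrac34\sup_{\frac14 B}|F|\le 3\min_{|x-a|=\frac14 r}|F|$.

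The only genuine ingredient beyond the quoted statements is the explicit conformal form of the extremal calibrated curves --- the M\"obius identity $|\psi(x)|=C|x-a|/|x-p|$ --- from which the constant $\frac{1+2\rho}{1-2\rho}$ drops out; the remaining work is the purely arithmetic matching of exponents so that the Harnack loss $2^{(4-4\rho)/\rho}$ and the factor $(1-2\rho)^{-2}$ combine to exactly $2^{(4-\rho)/\rho}$. I expect this bookkeeping, rather than any conceptual point, to be the only thing requiring care.
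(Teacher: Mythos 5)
Your proof is correct and follows essentially the same route as the paper's: translate so $F(a)=0$, compare $F$ with a nearby calibrated curve $\varphi$ via Proposition \ref{prop:1-qr-near} on $\tfrac12 B$ with $\sigma=2\rho$, use the M\"obius distortion bound $\sup_{\rho B}|\varphi|\le\frac{1+2\rho}{1-2\rho}\min_{|x-a|=\rho r}|\varphi|$, triangle-inequality back to $F$, and absorb the error via the Harnack estimate of Proposition \ref{prop-iwaniecstabilitylemma3}. The only addition over the paper's write-up is your explicit observation that the inequality is trivial when $2^{(4-\rho)/\rho}\tau^{n,k}(t)\ge 1$ (which for $\rho\le\tfrac14$ reduces the nontrivial case to $\tau^{n,k}(t)<2^{-15}<10^{-3}$, so that the Harnack hypothesis is automatically met) --- a small but welcome clarification that the paper leaves implicit.
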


\begin{proof}
Since \eqref{eq:iwaniec-lemma4-2} follows immediately from  \eqref{eq:iwaniec-lemma4-1}, it suffices to prove  \eqref{eq:iwaniec-lemma4-1}.

We may assume that $a=0$ and $F(a)=0$. By Proposition \ref{prop:1-qr-near}, since $F|_{\frac{1}{2} B} \in \ccF_t^{n,k}(\frac{1}{2} B)$, there exists $\varphi \in \ccF^{n,k}(\frac{1}{2} B)$ for which $\varphi(0)=0$ and
\[
\sup_{\frac{\sigma}{2} B} |F-\varphi|\le \frac{\tau^{n,k}(t)}{1-\sigma} \sup_{\frac{1}{2} B} |F|
\]
for every $\sigma \in [0,1)$. In particular,
\[
\sup_{\rho B} |F-\varphi|\le \frac{\tau^{n,k}(t)}{1-2\rho} \sup_{\frac{1}{2} B} |F| \le 2\tau^{n,k}(t) \sup_{\frac{1}{2} B} |F|.
\]

Since $\varphi(0)=0$, we have either $\varphi \equiv 0$ or $\varphi=(0,\ldots,0,\varphi_{i_0},0\ldots,0)$, where $\varphi_{i_0}$ is a restriction of a M\"obius transformation $\bS^n \to \bS^n$. Thus $\varphi$ satisfies the estimate
\[
\sup_{\rho B} |\varphi| \le \frac{1+2\rho}{1-2\rho} \min_{|x|=\rho r} |\varphi(x)| \le \frac{1+2\rho}{1-2\rho} \left( \sup_{\rho B} |F-\varphi| + \min_{|x|=\rho r} |F(x)| \right).
\]
Then, by the triangle inequality and Proposition \ref{prop-iwaniecstabilitylemma3},
\begin{align*}
\sup_{\rho B} |F| &\le \sup_{\rho B} |F-\varphi| + \frac{1+2\rho}{1-2\rho} \left( \sup_{\rho B} |F-\varphi| + \min_{|x|=\rho r} |F(x)| \right) \\
&\le \frac{1+2\rho}{1-2\rho} \min_{|x|=\rho r} |F(x)| + 4\sup_{\rho B} |F-\varphi| \\
&\le \frac{1+2\rho}{1-2\rho} \min_{|x|=\rho r} |F(x)| + 8\tau^{n,k}(t) \sup_{\frac{1}{2} B} |F| \\
&\le \frac{1+2\rho}{1-2\rho} \min_{|x|=\rho r} |F(x)| + 2^\frac{4-\rho}{\rho} \tau^{n,k}(t) \sup_{\rho B} |F|. 
\end{align*}
The claim follows.
\end{proof}

We are now ready to prove Proposition \ref{prop:iwaniec-distortion}

\begin{proof}[Proof of Proposition \ref{prop:iwaniec-distortion}]
Since $\tau^{n,k}(t)\to 0$ as $t \to 0$, we may fix $\varepsilon >0$ for which $\tau^{n,k}(\varepsilon)\le \min \{ \delta/2, 2^{-17} \}$. Let now $B=B^n(a,r)$ be a ball in $\R^n$ and $F\colon B\to (\R^n)^k$ a $(1+\varepsilon)$-quasiregular $\vol_{(\R^n)^k}^\times$-curve. Let also $0<\rho \le \frac{1}{4}$.

Let $B'=B^n(a,4\rho r)$. Then $F|_{B'} \in \ccF_\varepsilon^{n,k}(B')$ and by \eqref{eq:iwaniec-lemma4-2}, we have that
\[
\sup_{x\in \rho B} |F(x)-F(a)| \le 4\min_{|x-a|=\rho r} |F(x)-F(a)|.
\]
Thus, by \eqref{eq:iwaniec-lemma4-1}, we have that
\begin{align*}
\sup_{x\in \rho B} |F(x)-F(a)| 
\le& \frac{1+2\rho}{1-2\rho} \min_{|x-a|=\rho r} |F(x)-F(a)| \\
&+ 2^\frac{4-\rho}{\rho} \tau^{n,k}(\varepsilon) \sup_{x\in \rho B} |F(x)-F(a)|\\
\le& \frac{1+2\rho}{1-2\rho} \min_{|x-a|=\rho r} |F(x)-F(a)| \\
&+ 2^\frac{4-\rho}{\rho} \cdot \frac{\delta}{2} \cdot 4 \min_{|x-a|=\rho r}|F(x)-F(a)|\\
\le& \left( \frac{1+2\rho}{1-2\rho} + 2^{\frac{4}{\rho}} \delta \right) \min_{|x-a|=\rho r} |F(x)-F(a)|.
\end{align*}
This completes the proof.
\end{proof}


\section{Almost maximal planes of $\vol_{(\R^n)^k}^\times$}
\label{sec:linear-algebra-2}

In the following proposition we characterize planes, in terms of linear maps, which almost maximize the comass norm of $\vol_{(\R^n)^k}^\times$. This proposition can be viewed as a linear algebraic version of Theorem \ref{thm:manifold-Liouville}.

\begin{proposition}
\label{prop:linear-1+epsilon}
Let $n\ge 3$,  $k \ge 1$, and $\varepsilon \in (0, 1/(100k))$. Then, for a linear map $L=(L_1,\ldots, L_k) \colon \R^n \to (\R^n)^k$ satisfying
\begin{equation}
\label{eq:linear-1+epsilon}
\norm{L}^n \le (1+\varepsilon) \left( \star L^*\vol_{(\R^n)^k}^\times \right),
\end{equation}
there exists an index $i_0\in \{1,\ldots k\}$ for which
\begin{equation}
\label{eq:qc-index}
\norm{L_{i_0}}^n \le (1+7k\sqrt{\varepsilon}) \det L_{i_0}.
\end{equation}
Moreover,
\begin{enumerate}
\item $\norm{L_{i_0}} \ge \norm{L}/(1+\varepsilon)$, and \label{item:good-index}
\item $\norm{L_i} \le 5\sqrt{k} \varepsilon^\frac{1}{4} \norm{L}$ for every $i\ne i_0$. \label{item:good-index-2}
\end{enumerate}
\end{proposition}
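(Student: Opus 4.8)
The plan is to mimic the proof of Lemma~\ref{lemma:linear-rigidity}, tracking the loss of sharpness caused by the perturbation $\varepsilon$. By Lemma~\ref{lemma-linearmaprigidity} applied to the calibration $\vol_{(\R^n)^k}^\times$, condition \eqref{eq:linear-1+epsilon} forces $L$ to be close to conformal; more precisely, writing $\sigma_1\ge \cdots \ge \sigma_n$ for the singular values of $L$ (so $\norm{L}=\sigma_1$), the inequality $\sigma_1^n\le (1+\varepsilon)\sigma_1\cdots\sigma_n$ gives $\sigma_n\ge \sigma_1/(1+\varepsilon)$, hence all $\sigma_j$ lie in $[\sigma_1/(1+\varepsilon),\sigma_1]$. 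After normalizing $\norm{L}=1$, this says $L$ is $(1+\varepsilon)$-bi-Lipschitz up to scaling, so $|L(v)|\in[(1+\varepsilon)^{-1},1]$ for every unit vector $v$. First I would record these consequences and reduce to the normalized case $\norm{L}=1$.

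Next I would set, as in the proof of Lemma~\ref{lemma:linear-rigidity}, $x_j^{(i)}=|L_i(e_j)|$ for each factor $i$ and each basis vector, and analyze the chain of inequalities in \eqref{eq:L-vol-n-k}, which now read
\[
1-\tfrac{C\varepsilon}{\ldots}\ \le\ \star L^*\vol_{(\R^n)^k}^\times \ \le\ \psi_n(x_1,\ldots,x_n)\ \le 1
\]
for a suitable combination of the $x_j^{(i)}$; the point is that \eqref{eq:linear-1+epsilon} (after normalization) forces $\star L^*\vol_{(\R^n)^k}^\times \ge 1/(1+\varepsilon)\ge 1-\varepsilon$, so all the intermediate quantities are within $O(\varepsilon)$ of their maxima. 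The key quantitative input is a stability estimate for the function $\psi_n$ on $[0,1]^n$ from Lemma~\ref{lemma:linear-rigidity}: since for $n\ge 3$ the maximum set $E_n=\{(0,\ldots,0),(1,\ldots,1)\}$ consists of two isolated points and $\psi_n$ is smooth with non-degenerate behaviour near them, $\psi_n(x)\ge 1-\varepsilon$ implies $x$ is within $O(\sqrt{\varepsilon})$ of $E_n$ (the square root appearing because $\psi_n$ has a critical point of order two there — e.g. $\psi_3$ near $(1,1,1)$ behaves like $1-\tfrac12\sum_{i<j}(1-x_i)(1-x_j)+\cdots$). Iterating the induction on $k$ as in Lemma~\ref{lemma:linear-rigidity}, at each step one factor is forced to have its comass-testing vector near $(1,\ldots,1)$ and the rest near $0$, so one picks out the index $i_0$ of the "large" factor; the accumulated error over the $k$ inductive steps is what produces the $5\sqrt{k}\,\varepsilon^{1/4}$ and $7k\sqrt{\varepsilon}$ constants. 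Concretely, $\norm{L_i}\le C\sqrt{k}\,\varepsilon^{1/4}$ for $i\ne i_0$ follows from $x_j^{(i)}=O(\varepsilon^{1/4})$ together with the near-conformality of $L_i$ (its singular values differ by a factor $\le 1+\varepsilon$), which gives \eqref{item:good-index-2}; then \eqref{item:good-index} follows since $\norm{L_{i_0}}^2\ge \norm{L(e_1)}^2-\sum_{i\ne i_0}\norm{L_i}^2$ (choosing $e_1$ with $|L(e_1)|=\norm{L}=1$, again using near-conformality) is $\ge 1-O(k^2\sqrt{\varepsilon})$, which after undoing the normalization is $\ge \norm{L}/(1+\varepsilon)$; finally \eqref{eq:qc-index} follows because $L_{i_0}$ is itself nearly conformal (singular value spread $\le 1+\varepsilon$) with norm close to $1$, so $\norm{L_{i_0}}^n/\det L_{i_0}\le 1+O(k\sqrt{\varepsilon})$.

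The main obstacle I expect is bookkeeping the quantitative stability of $\psi_n$ near its two isolated maxima and propagating the error cleanly through the induction on $k$ without the constants blowing up worse than the stated $\varepsilon^{1/4}$ and $\sqrt{\varepsilon}$ rates — in particular making sure the induction hypothesis is stated with the right $\varepsilon$-dependence at each level (one loses a power when passing from "$\psi_n\ge 1-\varepsilon$" to "distance to $E_n$ is $O(\sqrt\varepsilon)$", and another half-power is absorbed in converting a bound on the $x_j^{(i)}$ into a bound on $\norm{L_i}$). Everything else is the linear algebra already carried out in Lemmas~\ref{lemma-linearmaprigidity} and~\ref{lemma:linear-rigidity}, now run with inequalities in place of equalities.
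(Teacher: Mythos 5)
Your plan takes a genuinely different route from the paper, and it has concrete gaps beyond the bookkeeping you flag. The paper does not use $\psi_n$ at all: it proves item~\eqref{item:good-index} \emph{first} and directly, by rotating each $L_i$ so that $L_i(e_n)\parallel e_n$ and projecting to $\R^{n-1}$, giving $\norm{R}^n\le(1+\varepsilon)\lambda_1\bigl(\star\widehat R^*\vol_{(\R^{n-1})^k}^\times\bigr)\le(1+\varepsilon)\lambda_1\norm{R}^{n-1}$, hence $\lambda_1\ge\norm{L}/(1+\varepsilon)$ in one line. Then \eqref{eq:qc-index} follows by picking $e_1$ with $|L_{i_0}(e_1)|=\norm{L_{i_0}}$, using \eqref{item:good-index} to get $|L'(e_1)|^2\le3\varepsilon\norm{L_{i_0}}^2$, and applying Hadamard's inequality to bound $\star(L')^*\vol$; and \eqref{item:good-index-2} then comes from a singular-value lower bound on $L_{i_0}$ extracted from \eqref{eq:qc-index}. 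No $\psi_n$-stability, no induction in the perturbed setting.

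The two genuine problems with your proposal: (i) your proposed order of deduction cannot reach \eqref{item:good-index} as stated. You suggest getting it last from \eqref{item:good-index-2} via $\norm{L_{i_0}}^2\ge|L(e_1)|^2-\sum_{i\ne i_0}\norm{L_i}^2\ge1-O(k^2\sqrt\varepsilon)$. But $1/(1+\varepsilon)^2\ge1-2\varepsilon$, and since $\varepsilon<1/(100k)$ implies $\varepsilon\ll k^2\sqrt\varepsilon$, your bound is strictly weaker than the claimed $\norm{L}/(1+\varepsilon)$; item~\eqref{item:good-index} has to be proven independently, and it is in fact the \emph{first} thing the paper establishes. (ii) The $\psi_n$-stability you invoke is neither stated correctly nor proved. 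Near $(1,\ldots,1)$ the deficit $1-\psi_n$ is \emph{linear}, not quadratic: with $x_j=1-\delta_j$ one has $\prod\sqrt{1-x_j^2}\approx 2^{n/2}\prod\sqrt{\delta_j}$ (higher order) while $\prod x_j\approx1-\sum\delta_j$, so $1-\psi_n\approx\sum\delta_j$. The quadratic behaviour occurs only near $\mathbf 0$. This asymmetry, together with the need for a global (not just local-Taylor) estimate on $[0,1]^n$ and the fact that the exact chain \eqref{eq:L-vol-n-k} assumed $|L(e_j)|\equiv1$ (which now only holds up to a factor $1+\varepsilon$), means the lemma you need is a real piece of work that you have deferred rather than supplied.
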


\begin{proof}
We may assume that $L$ is non-constant and that $\det L_i \ge 0$ for each $i\in \{1,\ldots, k\}$. Indeed, the claim holds trivially for constant maps and, by post-composing each $L_i$ with an orientation reversing isometry, if necessary, we obtain a linear map $\widetilde L = (\widetilde L_1, \ldots, \widetilde L_k) \colon \R^n \to (\R^n)^k$ satisfying \eqref{eq:linear-1+epsilon} with $\norm{\widetilde L} = \norm{L}$, $\star L^*\omega \le \star \widetilde L^*\omega$, and $\norm{\widetilde L_i}=\norm{L_i}$ for each $i\in \{1,\ldots, k\}$. We prove first \eqref{item:good-index}. Then we prove \eqref{eq:qc-index} and finally \eqref{item:good-index-2}.

For the proof of \eqref{item:good-index}, let $\lambda_i = \norm{L_i}$ for each $i=1,\ldots, k$. Since \eqref{eq:linear-1+epsilon} is invariant under permutation of the factors of $(\R^n)^k$, we may assume that $\lambda_1 \ge \lambda_2 \ge \cdots \ge \lambda_k \ge 0$. For \eqref{item:good-index} it suffices to show that $\lambda_1 \ge \norm{L}/(1+\varepsilon)$. 

For each $i\in \{1,\ldots, k\}$, let $Q_i\in \SO(n)$ be such an orientation preserving isometry that $Q_i(L_i(e_n)) = c_i e_n$, where $c_i\ge 0$, and denote $R_i = Q_iL_i \colon \R^n \to \R^n$. Let also $R = (R_1,\ldots, R_k) \colon \R^n \to (\R^n)^k$. Then
\begin{equation}
\label{eq:R-is-good}
\norm{R}^n \le (1+\varepsilon) \left( \star R^*\vol_{(\R^n)^k}^\times \right).
\end{equation}
Indeed, for each $v\in \R^n$, we have that
\[
|R(v)|^2 = \sum_{i=1}^k |R_i(v)|^2 = \sum_{i=1}^k |(Q_iL_i)(v)|^2 = \sum_{i=1}^k |L_i(v)|^2 = |L(v)|^2.
\]
Thus $\norm{R} = \norm{L}$. Since we also have that
\begin{align*}
\star R^*\vol_{(\R^n)^k}^\times 
&=\sum_{i=1}^k \star R_i^*\vol_{\R^n} 
= \sum_{i=1}^k \star L_i^*\vol_{\R^n} 
= \star L^*\vol_{(\R^n)^k}^\times,
\end{align*} 
we obtain \eqref{eq:R-is-good}.

For each $i=1,\ldots, k$, let $\widehat R_i = \pi\circ R_i|_{\R^{n-1}} \colon \R^{n-1}\to \R^{n-1}$, where $\pi \colon \R^n \to \R^{n-1}$ is the projection $(x_1,\ldots, x_n)\mapsto (x_1,\ldots, x_{n-1})$. Then, for each $i$, we have that $\det R_i = (\det \widehat R_i) c_i$. Let also $\widehat R = (\widehat R_1,\ldots, \widehat R_k) \colon \R^{n-1}\to (\R^{n-1})^k$.

Since $\lambda_1 \ge \lambda_i \ge c_i$ and $\det R_i\ge 0$ for each $i\in \{1,\ldots, k\}$, we have that
\begin{align*}
\norm{R}^n &\le (1+\varepsilon) \left( \star R^*\vol_{(\R^n)^k}^\times \right) = (1+\varepsilon) \sum_{i=1}^k \det R_i = (1+\varepsilon) \sum_{i=1}^k c_i \det \widehat R_i \\
&\le (1+\varepsilon) \lambda_1  \sum_{i=1}^k \det \widehat R_i
= (1+\varepsilon) \lambda_1 ( \star \widehat R^* \vol_{(\R^{n-1})^k}^\times) 
\le (1+\varepsilon) \lambda_1 \norm{\widehat R}^{n-1}. 
\end{align*}
Since $\norm{\widehat R}\le \norm{R}$, we conclude that
\begin{align*}
\lambda_1 \ge \frac{\norm{R}^n}{(1+\varepsilon) \norm{\widehat R}^{n-1}} \ge \frac{\norm{R}}{(1+\varepsilon)} = \frac{\norm{L}}{1+\varepsilon}.
\end{align*}
This concludes the proof of \eqref{item:good-index}.

To prove \eqref{eq:qc-index}, by precomposing $L$ with an isometry, we may assume that $\norm{L_1} = |L_1(e_1)|$. Let now $L'=(L_2,\ldots, L_k)\colon \R^n \to (\R^n)^{k-1}$. Then, by \eqref{item:good-index}, 
\begin{align*}
|L'(e_1)|^2 &= |L(e_1)|^2 - |L_1(e_1)|^2 \le \norm{L}^2 - \norm{L_1}^2 
\le \left( (1+\varepsilon)^2 - 1\right) \norm{L_1}^2 \\
&= \varepsilon(2+\varepsilon) \norm{L_1}^2 \le 3\varepsilon \norm{L_1}^2.
\end{align*}
By Hadamard's inequality,
\begin{align*}
\star (L')^*\vol_{(\R^n)^{k-1}} &= \sum_{j=2}^k \star L_j^*\vol_{\R^n} \le \sum_{j=2}^k |L_j(e_1)|\cdots |L_j(e_n)| \\
&\le (k-1) \sqrt{3\varepsilon} \norm{L_1} \norm{L'}^{n-1} \le (k-1) \sqrt{3\varepsilon} \norm{L}^n.
\end{align*}
Since 
\[
\star L^*\vol_{(\R^n)^k}^\times = \star L_1^*\vol_{\R^n} + \star (L')^*\vol_{(\R^n)^{k-1}}^\times \le \star L_1^*\vol_{\R^n} + (k-1) \sqrt{3\varepsilon} \norm{L}^n
\]
and $(1+\varepsilon)(k-1) \le k$, we have that
\[
\norm{L_1}^n \le \norm{L}^n \le \frac{1+\varepsilon}{1-k\sqrt{3\varepsilon}} (\star L_1^*\vol_{\R^n}) 
\le (1+7k\sqrt{\varepsilon}) \det L_1.
\]
This proves \eqref{eq:qc-index}.

Finally, to prove \eqref{item:good-index-2}, let $i\in \{2,\ldots, k\}$ and let $v\in \R^n$ be a unit vector for which $\norm{L_i}=|L_i(v)|$. Since $v$ is a unit vector, we have that $|L_1(v)|$ is at least the smallest singular value of $L_1$. 
By \eqref{eq:qc-index},
\[
|L_1(v)|\ge \frac{\det L_1}{\norm{L_1}^{n-1}} = \frac{\det L_1}{\norm{L_1}^n} \norm{L_1} \ge \frac{1}{1+7k\sqrt{\varepsilon}} \norm{L_1} \ge (1-7k\sqrt{\varepsilon}) \norm{L_1},
\]
and, using \eqref{item:good-index},
\begin{align*}
\norm{L_i}^2 &= |L_i(v)|^2 \le |L(v)|^2-|L_1(v)|^2 \le \norm{L}^2 - (1-7k\sqrt{\varepsilon})^2 \norm{L_1}^2 \\
&\le \left( (1+\varepsilon)^2 - (1-7k\sqrt{\varepsilon})^2\right)\norm{L_1}^2
\le 18k \sqrt{\varepsilon} \norm{L_1}^2 
< 25 k \sqrt{\varepsilon} \norm{L}^2.
\end{align*}
This concludes the proof.
\end{proof}

In the following lemma, we record a quantitative estimate for the fact that, linear maps $\R^n \to (\R^n)^k$ close to conformal linear maps $L \colon \R^n \to (\R^n)^k$ satisfying $\norm{L}^n = \star L^*\vol_{(\R^n)^k}^\times$ have small distortion.

\begin{lemma}
\label{lem:conformal-vertices-distortion}
Let $n\ge 2$, $k\ge 1$, and $0<\nu <2^{-n}$. Let $L\colon \R^n \to (\R^n)^k$ be a linear map for which $\star L^*\vol_{(\R^n)^k}^\times =\norm{L}^n$. Let $R\colon \R^n \to (\R^n)^k$ be a linear map satisfying
\[
|R(e_i)-L(e_i)| \le \frac{\nu}{\sqrt{n}} \norm{L}
\]
for $i=1,\ldots,n$. Then $\norm{R}\le (1+\nu)\norm{L}$ and
\[
\norm{R}^n \le \frac{(1+\nu)^n}{1-2^n \nu} \left( \star R^*\vol_{(\R^n)^k}^\times \right).
\]
\end{lemma}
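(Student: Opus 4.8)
The plan is to treat the operator-norm bound and the comass lower bound separately, reducing the latter to a determinant estimate whose constant does not depend on $k$. First I would prove $\norm{R}\le(1+\nu)\norm{L}$: for a unit vector $v=\sum_{j=1}^n a_je_j$ (so $\sum_j a_j^2=1$) we have $R(v)-L(v)=\sum_{j=1}^n a_j\bigl(R(e_j)-L(e_j)\bigr)$, and hence, by the triangle inequality and Cauchy--Schwarz ($\sum_j|a_j|\le\sqrt n$),
\[
|R(v)-L(v)|\le \frac{\nu}{\sqrt n}\norm{L}\sum_{j=1}^n|a_j|\le \nu\norm{L},
\]
so $|R(v)|\le|L(v)|+\nu\norm{L}\le(1+\nu)\norm{L}$; taking the supremum over unit vectors gives the bound.

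For the comass bound, write $L=(L_1,\dots,L_k)$ and $R=(R_1,\dots,R_k)$, so that $\star L^*\vol_{(\R^n)^k}^\times=\sum_{i=1}^k\det L_i$ and $\star R^*\vol_{(\R^n)^k}^\times=\sum_{i=1}^k\det R_i$; the hypothesis is then used only to record that $\sum_i\det L_i=\norm{L}^n$ (conformality of $L$ from Lemma \ref{lemma:linear-rigidity} is not actually needed). Since $\norm{R}^n\le(1+\nu)^n\norm{L}^n$ by the first part and $1-2^n\nu>0$, it suffices to prove the $k$-free estimate
\[
\sum_{i=1}^k\bigl|\det R_i-\det L_i\bigr|\le 2^n\nu\,\norm{L}^n,
\]
for then $\star R^*\vol_{(\R^n)^k}^\times\ge(1-2^n\nu)\norm{L}^n$ and dividing yields the asserted inequality. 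To estimate the left-hand side I would expand each difference by the column telescoping
\[
\det R_i-\det L_i=\sum_{j=1}^n\det\bigl(R_i(e_1),\dots,R_i(e_{j-1}),\,R_i(e_j)-L_i(e_j),\,L_i(e_{j+1}),\dots,L_i(e_n)\bigr)
\]
and bound the $j$-th summand by Hadamard's inequality by $|R_i(e_j)-L_i(e_j)|\,\beta_{ij}\,\bigl((1+\nu)\norm{L}\bigr)^{n-2}$, where $\beta_{ij}=\max_{l\ne j}\max\{|R_i(e_l)|,|L_i(e_l)|\}$ and we used $|R_i(e_l)|\le|R(e_l)|\le(1+\nu)\norm{L}$ and $|L_i(e_l)|\le|L(e_l)|\le\norm{L}$.

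The only delicate point is the summation over $i$, where a naive bound produces a factor growing with $k$; this is the main obstacle. To kill it I would apply Cauchy--Schwarz in the index $i$ together with the two elementary $k$-free identities $\sum_{i=1}^k|R_i(e_l)-L_i(e_l)|^2=|R(e_l)-L(e_l)|^2\le\frac{\nu^2}{n}\norm{L}^2$ and $\sum_{i=1}^k\max\{|R_i(e_l)|^2,|L_i(e_l)|^2\}\le |R(e_l)|^2+|L(e_l)|^2\le 3\norm{L}^2$ (the last step using $\nu<2^{-n}\le\tfrac14$). From the second, $\sum_i\beta_{ij}^2\le\sum_{l\ne j}\sum_i\max\{|R_i(e_l)|^2,|L_i(e_l)|^2\}\le 3n\norm{L}^2$, whence $\sum_i|R_i(e_j)-L_i(e_j)|\,\beta_{ij}\le\sqrt 3\,\nu\norm{L}^2$ and therefore $\sum_i|\det R_i-\det L_i|\le n\sqrt3\,(1+\nu)^{n-2}\nu\,\norm{L}^n$. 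The proof then finishes with the numerical verification $n\sqrt3\,(1+\nu)^{n-2}\le 2^n$ for every $n\ge2$ and $0<\nu<2^{-n}$, which follows from $(1+\nu)^{n-2}\le e^{(n-2)2^{-n}}\le e^{1/8}$ and $2n\le 2^n$. Everything apart from securing this $k$-independence is routine.
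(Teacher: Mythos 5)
Your proof is correct, but the second half takes a genuinely different route from the paper's. The paper never decomposes componentwise: it treats $\vol_{(\R^n)^k}^\times$ as a single $n$-form on $(\R^n)^k$ with comass norm one (Lemma \ref{lemma:linear-rigidity}), sets $x_i=R(e_i)-L(e_i)$, and expands
\[
\star R^*\vol_{(\R^n)^k}^\times
=\vol_{(\R^n)^k}^\times\bigl(L(e_1)+x_1,\ldots,L(e_n)+x_n\bigr)
=\norm{L}^n+\sum_{j=1}^{2^n-1}\vol_{(\R^n)^k}^\times(y_{j_1},\ldots,y_{j_n}),
\]
where every error tuple contains at least one $x_i$; each error term is then bounded by $\nu\norm{L}^n$ using the comass estimate $|\vol_{(\R^n)^k}^\times(w_1,\ldots,w_n)|\le\prod_i|w_i|$, which lives on the ambient $(\R^n)^k$ and therefore makes the $k$-independence automatic, with no Cauchy--Schwarz in the factor index required. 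Your route instead pushes the comass normalization down to $\sum_i\det L_i=\norm{L}^n$ and then must control $\sum_i|\det R_i-\det L_i|$ without a $\sqrt{k}$ loss; the Cauchy--Schwarz-in-$i$ device you introduce (retaining one $\beta_{ij}$ factor and bounding the remaining $n-2$ by $(1+\nu)\norm{L}$) is exactly the right fix for the obstacle you correctly identified, and your numerical check $n\sqrt{3}\,(1+\nu)^{n-2}\le 2^n$ does hold for $n\ge2$ and $\nu<2^{-n}$. What you pay for this is extra bookkeeping; what you gain is a self-contained determinant argument that never appeals to the fact that $\vol_{(\R^n)^k}^\times$ is a calibration, only to the classical Hadamard inequality on each factor. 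Your side remark that the conformality of $L$ is never used applies equally to the paper's proof, which also consumes the hypothesis only through $\star L^*\vol_{(\R^n)^k}^\times=\norm{L}^n$ and $|L(e_i)|\le\norm{L}$. The first half, the bound $\norm{R}\le(1+\nu)\norm{L}$, is identical to the paper's argument.
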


\begin{proof}
On one hand, let $v=(v_1,\ldots,v_n) \in \R^n$ for which $|v|=1$. Then
\begin{align*}
|R(v)| &\le |L(v)| + |R(v)-L(v)| \le \norm{L} + \sum_{i=1}^n |v_i| |R(e_i)-L(e_i)| \\
&\le \norm{L} + \frac{\nu}{\sqrt{n}} \norm{L} \sum_{i=1}^n |v_i| \le \norm{L} + \frac{\nu}{\sqrt{n}} \norm{L} \sqrt{n} |v| \\
&= (1+\nu)\norm{L}.
\end{align*}
Thus $\norm{R}\le (1+\nu)\norm{L}$.

On the other hand, let $x_i=R(e_i)-L(e_i)$ for $i=1,\ldots,n$. Then
\begin{align*}
\star R^*\vol_{(\R^n)^k}^\times &= \vol_{(\R^n)^k}^\times(L(e_1)+x_1,\ldots,L(e_n)+x_n) \\
&= \star L^*\vol_{(\R^n)^k}^\times + \sum_{j=1}^{2^n-1} \vol_{(\R^n)^k}^\times(y_{j_1},\ldots,y_{j_n}),
\end{align*}
where each $y_{j_i}$ is either $y_{j_i}=L(e_i)$ or $y_{j_i}=x_i$ and each $n$-tuple $(y_{j_1},\ldots,y_{j_n})$ contains at least one $y_{j_i}=x_i$. Thus, for each $j\in \{1,\ldots, 2^n-1\}$, we have by Hadamard's inequality that
\[
\left| \vol_{(\R^n)^k}^\times(y_{j_1},\ldots,y_{j_n}) \right| \le \prod_{i=1}^n |y_{j_i}| \le \left( \frac{\nu}{\sqrt{n}} \norm{L} \right) (\norm{L}^{n-1}) \le \nu \norm{L}^n.
\]
We obtain that
\[
\star R^*\vol_{(\R^n)^k}^\times \ge (1-2^n \nu)\norm{L}^n
\]
and the claim follows.
\end{proof}


\section{Approximation of quasiregular $\vol_{(\R^n)^k}^\times$-curves of small distortion with piecewise linear curves}
\label{sec:approximation}

In this section we prove a version of Kopylov's approximation theorem for quasiregular $\vol_{(\R^n)^k}^\times$-curves. Theorem \ref{thm:4}, stated in the introduction, is a direct consequence of Theorem \ref{thm:main-approx} below. Since the piecewise linear approximation is given almost canonically, we begin by restating the result. 

Let $n\ge 3$ and let $E\subset \R^n$ be a compact set. For each $j\in \N$, let $\mathcal Q_j(E)$ be the collection of dyadic cubes $Q$ with side length $2^{-j}$ for which the intersection $Q\cap E$ is non-empty. Here $Q\subset \R^n$ is a dyadic cube with side length $2^{-j}$ if $Q=2^{-j}(v+[0,1]^n)$ for some $v\in \Z^n$. Further, let $\Delta_j(E)$ be the collection of $n$-simplicies obtained by taking barycentric subdivisions of cubes in $\mathcal Q_j(E)$ and let
\[
D_j(E) = \bigcup_{\Delta \in \Delta_j(E)} \Delta \subset \R^n.
\]

Given a map $F\colon \Omega \to (\R^n)^k$, we denote $\widehat F_j^E$ the piecewise affine map $\widehat F_j^E \colon D_j(E) \to (\R^n)^k$ satisfying
\[
\widehat F_j^E(v_i) = F(v_i)
\]
for $i=0,\ldots,n$ whenever the $n$-simplex $[v_0,\ldots,v_n]$ belongs to $\Delta_j(E)$. We say that $\widehat F_j^E$ is the \emph{piecewise affine map associated to $F$ on $\Delta_j(E)$}.

\begin{theorem}
\label{thm:main-approx}
Let $n\ge 3$, $k\ge 1$, and $\varepsilon >0$. There exists $\delta=\delta(n,k,\varepsilon)>0$ for the following. Let $\Omega \subset \R^n$ be a domain, let $U\Subset \Omega$ be a compactly contained subdomain, and let $F\colon \Omega\to (\R^n)^k$ be a $(1+\delta)$-quasiregular $\vol_{(\R^n)^k}^\times$-curve. Then there exists a sequence $(j_m)$ so that for each $m\in \N$ and $j\ge j_m$, the piecewise affine map $(\widehat F_j^{\overline U})|_{U} \colon U \to (\R^n)^k$ is a $(1+\varepsilon)$-quasiregular $\vol_{(\R^n)^k}^\times$-curve satisfying
\[
\sup_U |F-\widehat F_j^{\overline U}| < \frac{1}{m}.
\]
\end{theorem}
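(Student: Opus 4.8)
The plan is to show that the piecewise affine interpolants $\widehat F_j^{\overline U}$ converge uniformly to $F$ on $U$, and that at sufficiently fine scales each simplex of $\Delta_j(\overline U)$ carries a linear map so close to conformal that the distortion inequality for $\vol_{(\R^n)^k}^\times$ degrades only slightly. The key input is the $C^{1,\alpha}$-regularity machinery: by Theorem \ref{thm:local-injectivity}, if the distortion $1+\delta$ of $F$ is small enough, then $F$ is a local quasiconformal embedding, hence (after localizing) uniformly close to a $\vol_{(\R^n)^k}^\times$-calibrated curve via Proposition \ref{prop:1-qr-near}. A calibrated curve has $C^{1,\alpha}$ differential by Theorem \ref{thm:Hardt-Lin}, so its derivative oscillates little on small balls. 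I would run a rescaling/compactness argument: suppose the conclusion fails, so there are $(1+\delta_m)$-quasiregular curves $F_m$ with $\delta_m \searrow 0$ for which no tail of the interpolating sequence works on $U$. Pick bad simplices $\Delta_m \in \Delta_{j_m}(\overline U)$ with $j_m \to \infty$, rescale to unit size, and use the normal family result (Theorem \ref{thm:normal}, via Lemma \ref{lemma:**}) together with the Caccioppoli estimate (Lemma \ref{lem:bounded-energy}) to extract a limit which is a $\vol_{(\R^n)^k}^\times$-calibrated curve, hence has an affine blow-up at the centre of the simplex. The contradiction comes from the fact that the blow-up derivative $L$ satisfies $\|L\|^n = \star L^*\vol_{(\R^n)^k}^\times$, so by Lemma \ref{lem:conformal-vertices-distortion} the affine interpolant of a $C^1$-small perturbation of $L$ on the reference simplex has distortion tending to $1$, contradicting the choice of $\Delta_m$ as bad.

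More concretely, the steps in order would be: \emph{(i)} Fix $\varepsilon>0$; choose $\nu=\nu(n,k,\varepsilon)$ small enough that the conclusion of Lemma \ref{lem:conformal-vertices-distortion} (applied with the reference simplex in place of the cube, after a fixed linear change of coordinates sending the barycentric simplex to a standard one) gives distortion at most $1+\varepsilon$. \emph{(ii)} Use Theorem \ref{thm:local-injectivity} and Proposition \ref{prop:1-qr-near} to pick $\delta=\delta(n,k,\nu)$ so small that every $(1+\delta)$-quasiregular $\vol_{(\R^n)^k}^\times$-curve is, on each small ball in a neighborhood of $\overline U$, uniformly $o(1)$-close (after rescaling to unit size) to some calibrated curve $\varphi$, with the closeness controlled independently of the curve. \emph{(iii)} Since calibrated curves are $C^{1,\alpha}$ (Theorem \ref{thm:Hardt-Lin}) and $\overline U$ is compact, obtain a uniform modulus of continuity for $D\varphi$; combining with (ii) and a telescoping/triangle-inequality estimate shows that on a dyadic cube $Q$ of side $2^{-j}$ with $j$ large, $|F(v_i)-F(b) - D\varphi_b(v_i-b)|\le \eta(2^{-j})\cdot 2^{-j}$ for the vertices $v_i$ of any simplex $\Delta\subset Q$ with barycentre $b$, where $\eta(r)\to 0$ as $r\to 0$ uniformly over $\overline U$. \emph{(iv)} Rescaling $\Delta$ to unit size, this says the affine map $\widehat F_j^{\overline U}|_\Delta$ is, after normalization, within $\eta(2^{-j})/\sqrt n$ of the conformal-type map $\widetilde L := D\varphi_b$ (which satisfies $\|\widetilde L\|^n=\star\widetilde L^*\vol_{(\R^n)^k}^\times$) in the sense required by Lemma \ref{lem:conformal-vertices-distortion}; hence for $j\ge j_m$ (chosen so $\eta(2^{-j})$ is below the threshold from step (i)) the interpolant is a $(1+\varepsilon)$-quasiregular $\vol_{(\R^n)^k}^\times$-curve on each such simplex, and these glue to a curve on $U$ since the relation is $C^0$-closed and the interpolant is continuous. \emph{(v)} For the uniform approximation bound, note $|F-\widehat F_j^{\overline U}|$ on any simplex is controlled by the oscillation of $F$ on that simplex, which tends to $0$ uniformly on $\overline U$ as $j\to\infty$ by continuity (indeed by the uniform Hölder estimate from Theorem \ref{thm:normal}); choosing $j_m$ also large enough to make this below $1/m$ completes the proof.

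The main obstacle I expect is step (iii): transferring the pointwise $C^{1,\alpha}$ control of the \emph{calibrated} comparison curves $\varphi$ — which a priori depend on the ball and are only $o(1)$-close to $F$ — into a genuinely \emph{uniform-over-$\overline U$} estimate on the vertex values of $F$ itself. This requires that the "distance to calibrated curves" quantity produced by Proposition \ref{prop:1-qr-near} be small uniformly, which is exactly where the smallness of $\delta$ (through the gauge function $\tau^{n,k}$ and Theorem \ref{thm:local-injectivity}) must be exploited, and where one must be careful that rescaling a small ball around a point of $\overline U$ to unit size does not destroy the uniformity — this is handled by the scale-invariance of the sheaves $\ccF_t^{n,k}$ and the compactness in Lemma \ref{lemma:**}. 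A secondary technical point is that the barycentric subdivision of a cube produces simplices of bounded but non-unit eccentricity, so the constant in Lemma \ref{lem:conformal-vertices-distortion} must be applied after a fixed bilipschitz change of variables; since there are only finitely many combinatorial types of such simplices, this only costs a fixed dimensional constant absorbed into the choice of $\nu$.
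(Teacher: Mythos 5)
Your overall architecture matches the paper's: use Proposition \ref{prop:1-qr-near} to approximate $F$ on each rescaled simplex by a $\vol_{(\R^n)^k}^\times$-calibrated curve $\varphi$, then use the rigidity of $\varphi$ to see that the affine interpolant of $F$ over a sufficiently fine simplex is close to a linear map $L$ satisfying $\|L\|^n=\star L^*\vol_{(\R^n)^k}^\times$, and close the distortion bound with Lemma \ref{lem:conformal-vertices-distortion}. However, there is a genuine quantitative gap in your step (iii) which you partially flag but do not resolve. After normalizing $F$ on a ball so that $F(0)=0$ and $\sup_{B^n}|F|=1$, Proposition \ref{prop:1-qr-near} yields $\sup_{B^n(1/2)}|F-\varphi|\le 2\tau^{n,k}(\delta)$, an \emph{absolute} smallness. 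To apply Lemma \ref{lem:conformal-vertices-distortion}, though, you need the vertex errors $|P(e_i)-D\varphi(0)e_i|$ to be small \emph{relative to} $\|D\varphi(0)\|$; this requires a lower bound on $\|D\varphi(0)\|$ purely in terms of the normalization, and this does not follow from $\sup_{B^n}|F|=1$ alone — a priori $F$ could be nearly flat near $0$ and concentrate its oscillation near $\partial B^n$. The paper supplies this missing lower bound via the Harnack estimate (Proposition \ref{prop-iwaniecstabilitylemma3}), which gives $\sup_{B^n(1/2)}|F|\ge 2^{-12}$ and hence, combined with the explicit Möbius bound in Lemma \ref{lem:mob-bounds}, $\|D\varphi(0)\|\ge 2^{-13}$. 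Your proposal never invokes a Harnack-type inequality, and without it the quantitative argument does not close.

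A secondary divergence: you lean on the $C^{1,\alpha}$-regularity of calibrated curves (Theorem \ref{thm:Hardt-Lin}) and a ``uniform modulus of continuity for $D\varphi$,'' whereas the paper exploits the explicit classification of $\vol_{(\R^n)^k}^\times$-calibrated curves from Theorem \ref{thm:Euclidean-Liouville} — Möbius in exactly one coordinate, constant in the rest — together with the scale-invariant second-order Taylor estimate for Möbius maps (Lemma \ref{lem:mob-second-order}), $|\varphi(x)-\varphi(0)-D\varphi(0)x|\le C\|D\varphi(0)\||x|^2$. Your softer version could in principle be made quantitative via compactness of the family of normalized calibrated curves, but you would still need the Harnack input to keep $\|D\varphi(0)\|$ away from zero, so it amounts to a less explicit repackaging of the same ingredients. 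Also note that Theorem \ref{thm:local-injectivity} is not used in the paper's proof of this theorem and is not needed here; Proposition \ref{prop:1-qr-near} together with the Harnack estimate and the Möbius estimates suffice.
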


We divide the proof of Theorem \ref{thm:main-approx} into three parts. First, we recall classical estimates for M\"obius transformations on $\R^n$ with respect to their derivatives. Second, we prove results regarding affine maps agreeing with a quasiregular $\vol_{(\R^n)^k}^\times$-curve of small distortion at the vertices of a simplex. Finally, we prove Theorem \ref{thm:main-approx} using these results.

\subsection{M\"obius estimates}

We begin by recalling two classical estimates for M\"obius transformations. As they have a crucial role in the proof of Theorem \ref{thm:main-approx}, we give short proofs. The first lemma is a distortion estimate in terms of the derivative.

\begin{lemma}
\label{lem:mob-bounds}
Let $n\ge 3$ and let $\varphi \colon B^n \to \R^n$ be a restriction of a M\"obius transformation $\bSn \to \bSn$. Then
\begin{equation}
\label{eq:mob-lower-upper}
\frac{1}{3} \norm{D\varphi(0)} \le \sup_{x\in B^n(\frac{1}{2})} |\varphi(x)-\varphi(0)| \le \norm{D\varphi(0)}.
\end{equation}
\end{lemma}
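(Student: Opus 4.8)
The plan is to combine the explicit normal form of a M\"obius transformation with the classical distance identity for inversions. Since $\varphi\colon B^n\to\R^n$ takes finite values on $B^n$, the pole $a$ of $\varphi$ (the preimage of $\infty$) lies outside $B^n$, so either $a=\infty$ or $a\in\R^n$ with $|a|\ge 1$. In the first case $\varphi$ is a similarity $x\mapsto b+\mu Bx$ with $b\in\R^n$, $\mu>0$ and $B\in O(n)$, so $\norm{D\varphi(0)}=\mu$ and $|\varphi(x)-\varphi(0)|=\mu|x|$; hence $\sup_{|x|\le 1/2}|\varphi(x)-\varphi(0)|=\mu/2$, which lies between $\tfrac13\norm{D\varphi(0)}$ and $\norm{D\varphi(0)}$. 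Thus we may assume $a\in\R^n$ with $|a|\ge 1$.

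First I would record the normal form for a finite pole $a$: there exist $b\in\R^n$, $\lambda>0$ and $A\in O(n)$ (with $A$ allowed to be sense-reversing) such that $\varphi(x)=b+\lambda A\,u(x)$, where $u(x)=(x-a)/|x-a|^2$ is the inversion centered at $a$. Since $Du(x)=|x-a|^{-2}\bigl(I-2|x-a|^{-2}(x-a)(x-a)^{T}\bigr)$ is $|x-a|^{-2}$ times a Householder reflection, $\norm{Du(x)}=|x-a|^{-2}$, and therefore $\norm{D\varphi(x)}=\lambda|x-a|^{-2}$; in particular $\norm{D\varphi(0)}=\lambda|a|^{-2}$. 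Next I would invoke the elementary identity $|u(x)-u(y)|=|x-y|/(|x-a|\,|y-a|)$, which follows by a one-line expansion of $|u(x)-u(y)|^2$. Applied with $y=0$, and using that $A$ is orthogonal, it gives
\[
|\varphi(x)-\varphi(0)|=\lambda\,|u(x)-u(0)|=\frac{\lambda|x|}{|x-a|\,|a|}=\norm{D\varphi(0)}\,\frac{|a|\,|x|}{|x-a|}.
\]

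With this identity in hand, both inequalities reduce to estimating the factor $|a||x|/|x-a|$ for $|x|\le\tfrac12$ and $|a|\ge 1$. For the upper bound, $|x-a|\ge|a|-|x|\ge|a|-\tfrac12\ge\tfrac12|a|$ (using $|x|\le\tfrac12\le\tfrac12|a|$), so $|a||x|/|x-a|\le 2|x|\le 1$, which yields $\sup_{|x|\le 1/2}|\varphi(x)-\varphi(0)|\le\norm{D\varphi(0)}$. For the lower bound I would test the point $x_0=\tfrac12\,a/|a|$ pointing toward the pole, for which $|x_0-a|=|a|-\tfrac12$ and hence $|a||x_0|/|x_0-a|=\tfrac12|a|/(|a|-\tfrac12)=|a|/(2|a|-1)\ge\tfrac12$, since $t\mapsto t/(2t-1)$ is decreasing on $[1,\infty)$ with limit $\tfrac12$. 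Therefore $\sup_{|x|\le 1/2}|\varphi(x)-\varphi(0)|\ge\tfrac12\norm{D\varphi(0)}\ge\tfrac13\norm{D\varphi(0)}$.

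There is no real obstacle here: the argument is a short computation once the normal form is in place. The only points requiring mild care are the reduction to the normal form (covering sense-reversing maps by allowing $A\in O(n)$ rather than $\mathrm{SO}(n)$, and treating the affine case $a=\infty$ separately) and the observation that finiteness of $\varphi$ on $B^n$ forces $|a|\ge 1$, which is exactly what makes $x_0=\tfrac12\,a/|a|$ a legitimate test point and the estimate $|a|-|x|\ge\tfrac12|a|$ valid.
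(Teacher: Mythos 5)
Your proof is correct and follows essentially the same strategy as the paper's: reduce to a finite pole $p$ outside the unit ball, establish the identity $|\varphi(x)-\varphi(0)|=\norm{D\varphi(0)}\,|p||x|/|x-p|$, bound the factor $|p||x|/|x-p|$ from above by $1$ via the triangle inequality for $|x|\le\tfrac12$ and $|p|\ge 1$, and obtain the lower bound by evaluating at the same test point $p/(2|p|)$. The only real difference is how the identity is obtained: the paper quotes a distance formula from Iwaniec's paper and passes to a limit in a sequence $u_m\to 0$ to identify the constant with $|p|\norm{D\varphi(0)}$, whereas you derive it directly from the normal form $\varphi=b+\lambda A\,u$ together with the elementary inversion identity $|u(x)-u(y)|=|x-y|/(|x-a||y-a|)$ and the computation $\norm{Du(x)}=|x-a|^{-2}$. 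Your version is a bit more self-contained, and like the paper your computation actually yields the stronger constant $\tfrac12$ in the lower bound, which is then relaxed to the $\tfrac13$ stated in the lemma. You also handle the affine case $a=\infty$ explicitly, which the paper dismisses with a brief ``it suffices.''
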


\begin{proof}
It suffices to consider the case that $\varphi$ is a restriction of a M\"obius transformation, also denoted $\varphi$, satisfying $\varphi^{-1}(\infty)\ne \infty$.

Let $(u_m)$ be a sequence in $B^n$ tending to the origin. Then
\[
\lim_{m\to \infty} \frac{|\varphi(u_m)-\varphi(0)|}{|u_m|} = \norm{D\varphi(0)}.
\]

Let $x\in B^n(\frac{1}{2})$. Then, for each $m$, we have
\[
|x-p|\frac{|\varphi(x)-\varphi(0)|}{|x|}=|u_m-p|\frac{|\varphi(u_m)-\varphi(0)|}{|u_m|},
\]
where $p\in \R^n \setminus B^n$ is the pole of $\varphi$; see e.g. \cite[(7)]{Iwaniec-PAMS-1987}. 

To obtain the second inequality in \eqref{eq:mob-lower-upper}, it suffices to observe that
\[
|\varphi(x)-\varphi(0)|=\frac{|x||p|}{|x-p|} \norm{D\varphi(0)} \le \norm{D\varphi(0)}.
\]

To obtain the first inequality in \eqref{eq:mob-lower-upper}, it suffices to observe that
\[
|\varphi(p/(2|p|))-\varphi(0)| = \frac{1}{2} \frac{|p|}{|p/(2|p|)-p|} \norm{D\varphi(0)} \ge \frac{1}{3} \norm{D\varphi(0)}.
\]
\end{proof}

In the following lemma, we record the fact that, for a M\"obius transformation $\varphi \colon B^n\to \R^n$, the second order error term $x\mapsto |\varphi(x)-\varphi(0)-D\varphi(0)x|$ is controlled by $x\mapsto \norm{D\varphi(0)} |x|^2$ near the origin.

\begin{lemma}
\label{lem:mob-second-order}
Let $n\ge 3$ and let $\varphi \colon B^n \to \R^n$ be a restriction of a M\"obius transformation $\bSn \to \bSn$. Then there exists $C=C(n)>0$ for which
\[
|\varphi(x)-\varphi(0)-D\varphi(0)x| \le C\norm{D\varphi(0)} |x|^2
\]
for $x\in B^n(\frac{1}{2})$.
\end{lemma}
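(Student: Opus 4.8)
The plan is to reduce the statement to a single inversion and then apply Taylor's theorem with an explicit remainder bound, so that the constant comes out manifestly dimensional.

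First I would dispose of the case $\varphi^{-1}(\infty)=\infty$: then $\varphi$ is the restriction of a similarity, the second order error term vanishes identically, and there is nothing to prove. Otherwise the pole $p=\varphi^{-1}(\infty)$ lies in $\R^n$, and since $\varphi(B^n)\subset\R^n$ we have $p\notin B^n$, i.e.\ $|p|\ge 1$. I would then use the normal form $\varphi=S\circ j_p$, where $j_p(x)=p+(x-p)/|x-p|^2$ is the inversion in the unit sphere centered at $p$ and $S(y)=\lambda O y+c$ is a similarity with $\lambda>0$ and $O\in\mathrm{O}(n)$; this holds because $j_p(\infty)=p$ and $\varphi(p)=\infty$, so $\varphi\circ j_p$ fixes $\infty$ and is therefore a similarity, and $j_p$ is an involution. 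Writing $g(x)=(x-p)/|x-p|^2$ for the non-constant part of $j_p$, we get $\varphi(x)=\lambda O g(x)+q$ with $q$ constant, hence $\varphi(x)-\varphi(0)-D\varphi(0)x=\lambda O\bigl(g(x)-g(0)-Dg(0)x\bigr)$ and $\norm{D\varphi(0)}=\lambda\norm{Dg(0)}$. Thus both sides of the claimed inequality are multiplied by $\lambda$ when passing from $\varphi$ to $g$, and it suffices to prove
\[
|g(x)-g(0)-Dg(0)x|\le C(n)\,\norm{Dg(0)}\,|x|^2,\qquad x\in B^n(\tfrac12).
\]

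On $B^n(\tfrac12)$ one has $|x-p|\ge|p|-\tfrac12\ge\tfrac12$, so $g$ is smooth there. From the integral form of Taylor's formula, $g(x)-g(0)-Dg(0)x=\int_0^1\bigl(Dg(tx)-Dg(0)\bigr)x\dt$, so that $|g(x)-g(0)-Dg(0)x|\le|x|^2\sup_{B^n(1/2)}\norm{D^2g}$. It then remains to bound $\sup_{B^n(1/2)}\norm{D^2g}$ by a dimensional multiple of $\norm{Dg(0)}=1/|p|^2$. For this I would use the scaling identity $g_p(x)=|p|^{-1}g_{p/|p|}(x/|p|)$, which reduces matters to poles on the unit sphere: the map $x\mapsto g_{\hat p}(x)$ is smooth on the closed ball $\overline{B^n(1/2)}$ (its only singularity, $\hat p$, is at distance $\tfrac12$) and depends rotation-equivariantly on $\hat p$, so $C_0(n)=\sup_{|\hat p|=1}\sup_{B^n(1/2)}\norm{D^2g_{\hat p}}$ is finite and depends only on $n$. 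Differentiating the scaling identity twice and using that $x/|p|\in B^n(1/2)$ when $x\in B^n(1/2)$ and $|p|\ge1$ gives $\sup_{B^n(1/2)}\norm{D^2g_p}\le C_0(n)\,|p|^{-3}\le C_0(n)\,|p|^{-2}=C_0(n)\norm{Dg_p(0)}$. Combining the three displays yields the claim with $C(n)=C_0(n)$.

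I do not expect a substantial obstacle; this is a classical distortion estimate for Möbius maps. The only steps needing care are the normal-form reduction — to make sure the final constant is genuinely dimensional, in particular independent of the pole $p$ and of $\norm{D\varphi(0)}$ — and the bookkeeping in the scaling step. A softer alternative would be a normal-families argument on the family of Möbius maps $B^n\to\R^n$ normalized by $\varphi(0)=0$ and $\norm{D\varphi(0)}=1$ (together with the limiting affine maps), but the explicit inversion computation above is shorter and produces the constant transparently.
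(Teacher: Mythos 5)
Your proof is correct, and the overall strategy is the same as the paper's: handle the affine case separately, use the Möbius normal form with an external pole $p$ (with $|p|\ge 1$), appeal to Taylor's theorem with a second-order remainder, and then bound the Hessian by a dimensional multiple of $\norm{D\varphi(0)}=c/|p|^2$ (or $1/|p|^2$ in your normalization). All your intermediate steps check out: the decomposition $\varphi=S\circ j_p$ is legitimate since $\varphi\circ j_p$ fixes $\infty$, the scaling identity $g_p(x)=|p|^{-1}g_{p/|p|}(x/|p|)$ is correct and gives $D^2 g_p(x)=|p|^{-3}D^2g_{p/|p|}(x/|p|)$, and $x\in B^n(1/2)$ together with $|p|\ge 1$ indeed keeps $x/|p|$ inside $B^n(1/2)$, so the compactness constant $C_0(n)$ controls the scaled Hessian uniformly.

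The one genuine divergence from the paper is how you bound the Hessian. The paper differentiates the normal form $\varphi(y)=b+\frac{cA(y-p)}{|y-p|^2}$ twice and estimates the entries $|h^i_{\ell j}|$ by a direct computation, arriving at $\sup_{B^n(1/2)}\norm{D^2\varphi}\lesssim c/|p|^3\le 8\,\norm{D\varphi(0)}$ with an explicit constant. You instead exploit the scaling covariance of the inversion to push the pole to the unit sphere and then invoke a compactness/continuity argument to produce $C_0(n)$. Your route is slicker and avoids the second-derivative bookkeeping; the paper's route is more elementary and hands you an explicit numerical constant, which is irrelevant here but sometimes convenient. Either version is a complete proof.
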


\begin{proof}
Let $x\in B^n(\frac{1}{2})$. Then there exists $\xi_1,\ldots,\xi_n \in B^n(\frac{1}{2})$ satisfying
\[
\varphi(x) = \varphi(0) + D\varphi(0)x + \frac{1}{2} (x^t H_1 x,\ldots, x^t H_n x),
\]
where $H_i=(h_{\ell j}^i)=D^2 \varphi_i (\xi_i)$ for each $i$. We obtain
\[
|\varphi(x)-\varphi(0)-D\varphi(0)x| \le \frac{1}{2} \sum_{i=1}^n |x^t H_i x| \le \frac{1}{2} |x|^2 \sum_{i=1}^n \norm{H_i}
\]
by the Cauchy--Schwartz inequality. For each $i$, we have that $\norm{H_i} \le C(n) \max_{\ell,j} |h_{\ell j}^i|$, so it suffices to show that
\[
\max_{\ell,j} |h_{\ell j}^i| \le C(n) \norm{D\varphi(0)}.
\]

Since $\varphi$ is a M\"obius transformation on $\R^n$ and the pole of $\varphi$ is outside the unit ball $B^n$, there exists an orthogonal matrix $A=(a_{ij}) \in \R^{n\times n}$, $b\in \R^n$, $p=(p_1,\ldots,p_n) \in \R^n \setminus B$, $c>0$, and $\zeta \in \{0,2\}$ satisfying
\[
\varphi(y) = b + \frac{cA(y-p)}{|y-p|^\zeta},
\]
for $y\in B^n$; see e.g. \cite{Gehring-Martin-Palka-book} and  \cite{Iwaniec-Martin-book}. If $\zeta=0$, the claim is immediate, so suppose that $\zeta=2$. Then we also have that $\norm{D\varphi(0)} = \frac{c}{|p|^2}$.

Now, for $i,j=1,\ldots,n$ and for $y=(y_1,\ldots,y_n) \in B^n$, we have
\[
\frac{\partial \varphi_i}{\partial y_j}(y) = \frac{ca_{ij}}{|y-p|^2} - \frac{2c(y_j-p_j)}{|y-p|^4} s_i(y),
\]
where $s_i(y)=\sum_{m=1}^n a_{im}(y_m-p_m)$. Thus, for $i,j,\ell=1,\ldots,n$ and $y=(y_1,\ldots,y_n) \in B^n$, we have that
\begin{align*}
\frac{\partial^2 \varphi_i}{\partial y_\ell \partial y_j}(y) =& ca_{ij} \frac{\partial}{\partial y_\ell} \left( \frac{1}{|y-p|^2} \right) - 2c \frac{\partial}{\partial y_\ell} \left( \frac{(y_j-p_j)}{|y-p|^4} s_i(y) \right) \\
=& - 2ca_{ij}\frac{y_\ell-p_\ell}{|y-p|^4} 
-\frac{2c}{|y-p|^4}\left( \delta_{\ell j}s_i(y) + a_{i\ell}(y_j-p_j) \right) \\
&+\frac{2c}{|y-p|^4} \frac{4(y_j-p_j)(y_\ell-p_\ell)}{|y-p|^2}s_i(y),
\end{align*}
where $\delta_{\ell j}$ is the Kronecker delta.

Since $D^2 \varphi_i (\xi_i)=(h_{\ell j}^i)$ and $\xi_i \in B^n(\frac{1}{2})$ for each $i$, it follows that we have
\[
|h_{\ell j}^i| \le C(n)\frac{c}{|\xi_i-p|^3}
\]
for each $i$, $j$, and $\ell$. On the other hand, since $\frac{1}{2}\le \frac{1}{2}|p|\le |\xi_i-p|$ for each $i$, we have that
\[
\frac{c}{|\xi_i-p|^3} = \frac{1}{|\xi_i-p|} \frac{c}{|\xi_i-p|^2} \le 2\cdot 4 \frac{c}{|p|^2} = 8 \norm{D\varphi(0)}.
\]
This concludes the proof.
\end{proof}

\subsection{Affine approximation of a quasiregular $\vol_{(\R^n)^k}^\times$-curve in a simplex}

The following lemma is similar to \cite[Lemma 6]{Kopylov-1972}. It states that for each $\varepsilon>0$ there exists $\delta>0$ for which a $(1+\delta)$-quasiregular $\vol_{(\R^n)^k}^\times$-curve $F\colon B^n(2)\to (\R^n)^k$, normalized to satisfy $F(0)=0$ and $\sup_{B^n}|F| =1$, can be approximated near the origin by a linear map $P\colon \R^n \to (\R^n)^k$, which is also a $(1+\varepsilon)$-quasiregular $\vol_{(\R^n)^k}^\times$-curve. In this case, we also obtain a uniform bound for the operator norm of $P$, which will be useful later.

\begin{lemma}
\label{lem:pl-approx-model}
Let $n\ge 3$, $k\ge 1$, and $\varepsilon >0$. There exists $t_0=t_0(n,\varepsilon)\in (0,1/2)$ and $\delta=\delta(n,k,\varepsilon)>0$ for the following. Let $F\colon B^n(2) \to (\R^n)^k$ be a $(1+\delta)$-quasiregular $\vol_{(\R^n)^k}^\times$-curve satisfying $F(0)=0$ and $\sup_{B^n} |F|=1$. Let $P\colon \R^n \to (\R^n)^k$ be the linear map defined by $P(e_i)=\frac{1}{t_0}F(t_0e_i)$ for $i=1,\ldots,n$. Then $P$ is a $(1+\varepsilon)$-quasiregular $\vol_{(\R^n)^k}^\times$-curve and $\norm{P} \le 12$.
\end{lemma}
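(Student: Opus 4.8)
The strategy is to run the stability machinery of Section~\ref{sec:Iwaniec} at a fixed ball and extract linear data from the resulting comparison with a M\"obius map. The plan is as follows. First I would fix $t_0=t_0(n,\varepsilon)\in(0,1/2)$ small (to be determined by the M\"obius estimates of Lemma~\ref{lem:mob-bounds} and Lemma~\ref{lem:mob-second-order}) and then choose $\delta=\delta(n,k,\varepsilon)$ via the gauge function, so that $\tau^{n,k}(\delta)$ is as small as needed; recall $\tau^{n,k}(t)\to 0$ as $t\to 0$. Applying Proposition~\ref{prop:1-qr-near} on $B^n$ to the normalized curve $F$ (with $F(0)=0$, $\sup_{B^n}|F|=1$) yields a calibrated curve $\varphi\in\ccF^{n,k}(B^n)$ with $\varphi(0)=0$ and $\sup_{\sigma B^n}|F-\varphi|\le \tau^{n,k}(\delta)/(1-\sigma)$. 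By Theorem~\ref{thm:Euclidean-Liouville}, $\varphi=(0,\dots,0,\varphi_{i_0},0,\dots,0)$ where $\varphi_{i_0}$ is a restriction of a M\"obius transformation $\bS^n\to\bS^n$ (or $\varphi\equiv 0$). The operator-norm bound $\norm{P}\le 12$ will come from combining the uniform bound $\sup_{B^n(1/2)}|F|\le C$ (Harnack-type control via Proposition~\ref{prop-iwaniecstabilitylemma3} applied with $\rho=1/2$, or simply $\sup_{B^n}|F|=1$ together with $\tau$ small) with the lower bound $\norm{D\varphi_{i_0}(0)}\le 3\sup_{B^n(1/2)}|\varphi|$ from Lemma~\ref{lem:mob-bounds}; once $P(e_i)=\tfrac{1}{t_0}F(t_0e_i)$ is shown to be close to $D\varphi(0)e_i$, the bound on $\norm{P}$ follows from $\norm{D\varphi(0)}\le\sup_{B^n(1/2)}|\varphi|\le \sup_{B^n(1/2)}|F|+\tau^{n,k}(\delta)\le 1+\tau^{n,k}(\delta)$ plus a small error, giving a constant well below $12$.

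The heart of the argument is showing that $P(e_i)=\tfrac1{t_0}F(t_0e_i)$ is close to $D\varphi(0)e_i$. Writing $F(t_0e_i)=\varphi(t_0e_i)+(F-\varphi)(t_0e_i)$ and using Lemma~\ref{lem:mob-second-order},
\[
\bigl|F(t_0e_i)-D\varphi(0)(t_0e_i)\bigr|\le \bigl|\varphi(t_0e_i)-\varphi(0)-D\varphi(0)(t_0e_i)\bigr|+\bigl|(F-\varphi)(t_0e_i)\bigr|\le C(n)\norm{D\varphi(0)}t_0^2+2\tau^{n,k}(\delta),
\]
so that dividing by $t_0$ gives
\[
\bigl|P(e_i)-D\varphi(0)e_i\bigr|\le C(n)\norm{D\varphi(0)}\,t_0+\frac{2\tau^{n,k}(\delta)}{t_0}.
\]
Hence $P$ is within $\nu:=C(n)\norm{D\varphi(0)}t_0+2\tau^{n,k}(\delta)/t_0$ of the linear map $D\varphi(0)$, measured on the standard basis vectors. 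Since $D\varphi(0)=(0,\dots,0,D\varphi_{i_0}(0),0,\dots,0)$ is exactly a linear map $L\colon\R^n\to(\R^n)^k$ with $\star L^*\vol_{(\R^n)^k}^\times=\norm{L}^n$ (its nonzero block is conformal, being the differential of a M\"obius map), Lemma~\ref{lem:conformal-vertices-distortion} applies: provided $\nu/(\sqrt n\,\norm{L})<2^{-n}$, we get $\norm{P}\le(1+\nu')\norm{L}$ and
\[
\norm{P}^n\le\frac{(1+\nu')^n}{1-2^n\nu'}\bigl(\star P^*\vol_{(\R^n)^k}^\times\bigr),
\]
where $\nu'=\nu/\norm{L}$. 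Choosing $t_0$ small enough that $C(n)\norm{L}t_0\le \tfrac12 \cdot 2^{-n}\sqrt n\,\norm{L}$ and then $\delta$ small enough that $2\tau^{n,k}(\delta)/t_0\le \tfrac12\cdot 2^{-n}\sqrt n\,\norm{L}$ makes $\nu'$ as small as we like; in particular we can force $(1+\nu')^n/(1-2^n\nu')\le 1+\varepsilon$, which is the desired quasiregular-curve inequality for $P$.

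The one subtlety — and the main obstacle — is the degenerate case $\norm{L}=\norm{D\varphi_{i_0}(0)}=0$, i.e.\ $\varphi\equiv 0$; then the relative-error estimates above and the hypothesis of Lemma~\ref{lem:conformal-vertices-distortion} become vacuous or ill-posed, and one must rule this case out using the normalization $\sup_{B^n}|F|=1$. The point is that if $\varphi\equiv 0$ then $\sup_{B^n(1/2)}|F|\le 2\tau^{n,k}(\delta)$, and then the Harnack estimate of Proposition~\ref{prop-iwaniecstabilitylemma3} (with $\tfrac12 B=B^n$ after rescaling, comparing $\sup_{B^n}|F|$ to $\sup_{\rho B^n}|F|$) would force $\sup_{B^n}|F|$ small, contradicting $\sup_{B^n}|F|=1$ once $\delta$ is small. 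So after fixing $\delta$ small enough we may assume $\varphi\ne 0$, and moreover obtain a quantitative lower bound $\norm{D\varphi_{i_0}(0)}\ge c(n)>0$ (via Lemma~\ref{lem:mob-bounds} and the Harnack lower bound on $\sup_{B^n(1/2)}|F|$), which is exactly what is needed to make the choices of $t_0$ and $\delta$ above depend only on $n,k,\varepsilon$. I expect the bookkeeping of the three small parameters $t_0$, $\delta$ (equivalently $\tau^{n,k}(\delta)$), and $\nu'$ — choosing them in the right order so that each later one is swallowed by the earlier constraints — to be the only genuinely fiddly part; everything else is a direct invocation of results already established in Sections~\ref{sec:Iwaniec}, \ref{sec:linear-algebra-2}, and the M\"obius lemmas above.
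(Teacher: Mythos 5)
Your argument is essentially identical to the paper's: Proposition \ref{prop:1-qr-near} produces the comparison map $\varphi$, Lemma \ref{lem:mob-second-order} gives the quadratic Taylor estimate, Proposition \ref{prop-iwaniecstabilitylemma3} supplies the Harnack lower bound on $\sup_{B^n(1/2)}|F|$ (which both rules out $\varphi\equiv 0$ and makes the choice of $t_0$ and $\delta$ uniform in $F$), and Lemma \ref{lem:conformal-vertices-distortion} converts proximity to $D\varphi(0)$ on the basis vectors into the distortion inequality for $P$. The only slips are bookkeeping --- a stray $\sqrt{n}$ in matching your $\nu'$ to the $\nu$ of Lemma \ref{lem:conformal-vertices-distortion}, and the wrong direction of Lemma \ref{lem:mob-bounds} quoted in the $\norm{P}\le 12$ estimate (the correct upper bound is $\norm{D\varphi(0)}\le 3\sup_{B^n(1/2)}|\varphi|$, not $\norm{D\varphi(0)}\le\sup_{B^n(1/2)}|\varphi|$) --- neither of which affects the conclusion.
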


\begin{proof}
Let $C_0=C_0(n)>0$ be as in Lemma \ref{lem:mob-second-order}. Let $0<\nu<2^{-n}$ for which
\[
\frac{(1+\nu)^n}{1-2^n \nu} < 1+\varepsilon
\]
and let $t_0\in (0,1/2)$ satisfy $2^{13}\cdot 7C_0t_0\le \frac{\nu}{\sqrt{n}}$. Let then $\delta >0$ for which $\tau^{n,k}(\delta)\le \min (2^{-14},C_0t_0^2/2)$, where $\tau^{n,k}$ is the gauge function defined in Section \ref{sec:Iwaniec}.

Let $F\colon B^n(2) \to (\R^n)^k$ be a $(1+\delta)$-quasiregular $\vol_{(\R^n)^k}^\times$-curve satisfying $F(0)=0$ and $\sup_{B^n} |F|=1$. Let $P\colon \R^n \to (\R^n)^k$ be the linear map which is defined by $P(e_i)=\frac{1}{t_0}F(t_0e_i)$ for $i=1,\ldots,n$.

By Proposition \ref{prop:1-qr-near}, there exists a map $\varphi \in \ccF^{n,k}(B^n)$ for which $\varphi(0)=0$ and
\[
\sup_{B^n(\frac{1}{2})} |F-\varphi| \le 2\tau^{n,k}(\delta).
\]
Now $\varphi=(\varphi_1,\ldots,\varphi_k) \colon B^n \to (\R^n)^k$ is a map for which either $\varphi \equiv 0$ or $\varphi=(0,\ldots,0,\varphi_{i_0},0,\ldots,0)$, where $\varphi_{i_0} \colon B^n \to \R^n$ is a restriction of a M\"obius transformation $\bSn \to \bSn$. It follows that we have
\[
\norm{D\varphi(0)}^n = \star (D\varphi(0))^* \vol_{(\R^n)^k}^\times.
\]
Then, by Lemma \ref{lem:conformal-vertices-distortion}, it suffices to show that
\[
|P(e_i)-(D\varphi(0))(e_i)| \le \frac{\nu}{\sqrt{n}} \norm{D\varphi(0)}
\]
for $i=1,\ldots,n$ to obtain that $P$ is a $(1+\varepsilon)$-quasiregular $\vol_{(\R^n)^k}^\times$-curve and
\[
\norm{P} \le (1+\nu) \norm{D\varphi(0)} \le 2\norm{D\varphi(0)}.
\]

Since $\tau^{n,k}(\delta)\le 10^{-3}$, we obtain that
\[
\sup_{B^n(\frac{1}{2})} |F| \ge 2^{-12}
\]
by applying Proposition \ref{prop-iwaniecstabilitylemma3} in the ball $B^n(2)$ with $\rho=\frac{1}{4}$. Further, the estimate $\tau^{n,k}(\delta)\le 2^{-14}$ yields that
\[
\sup_{B^n(\frac{1}{2})} |\varphi| \le 2\tau^{n,k}(\delta) + 1 \le 2
\]
and
\[
\sup_{B^n(\frac{1}{2})} |\varphi| \ge 2^{-12} - 2\tau^{n,k}(\delta) \ge 2^{-13}.
\]
We obtain that there exists a unique M\"obius transformation coordinate map $\varphi_{i_0} \colon B^n \to \R^n$ satisfying
\[
2^{-13} \le \sup_{B^n(\frac{1}{2})} |\varphi_{i_0}| \le 2.
\]
Then \eqref{eq:mob-lower-upper} yields the estimate
\[
2^{-13}\le \norm{D\varphi_{i_0}(0)}\le 6.
\]

Now, for each $i$, we have
\begin{align*}
|P(e_i)-(D\varphi(0))(e_i)| &= \frac{1}{t_0}|F(t_0e_i)-(D\varphi(0))(t_0e_i)| \\
&\le \frac{1}{t_0} \left( |F(t_0e_i)-\varphi(t_0e_i)| + |\varphi(t_0e_i)-(D\varphi(0))(t_0e_i)| \right) \\
&\le \frac{1}{t_0} \left( 2\tau^{n,k}(\delta) + |\varphi_{i_0}(t_0e_i)-(D\varphi_{i_0}(0))(t_0e_i)| \right).
\end{align*}
Further, by Lemma \ref{lem:mob-second-order}, we have that
\[
|\varphi_{i_0}(t_0e_i)-(D\varphi_{i_0}(0))(t_0e_i)| \le C_0 \norm{D\varphi_{i_0}(0)} t_0^2 \le 6C_0 t_0^2.
\]
Combining these estimates, we obtain
\begin{align*}
|P(e_i)-(D\varphi(0))(e_i)| &\le 2^{13}\frac{2\tau^{n,k}(\delta)+6C_0 t_0^2}{t_0} \norm{D\varphi(0)} \\
&\le 2^{13}\frac{C_0t_0^2+6C_0t_0^2}{t_0} \norm{D\varphi(0)} = 2^{13}\cdot 7C_0 t_0 \norm{D\varphi(0)} \\
&\le \frac{\nu}{\sqrt{n}} \norm{D\varphi(0)},
\end{align*}
since $\tau^{n,k}(\delta) \le C_0t_0^2/2$ and $2^{13}\cdot 7C_0 t_0 \le \frac{\nu}{\sqrt{n}}$. This concludes the proof.
\end{proof}

We obtain the following proposition from Lemma \ref{lem:pl-approx-model} by scaling.

\begin{proposition}
\label{prop:pa-stand-simplex}
Let $n\ge 3$, $k\ge 1$, and $\varepsilon >0$. Let $t_0=t_0(n,\varepsilon)\in (0,1/2)$ and $\delta=\delta(n,k,\varepsilon)>0$ be as in Lemma \ref{lem:pl-approx-model}. Let $F\colon B^n(2/t_0) \to (\R^n)^k$ be a $(1+\delta)$-quasiregular $\vol_{(\R^n)^k}^\times$-curve and let $P\colon \R^n \to (\R^n)^k$ be the affine map defined by $P(0)=F(0)$ and $P(e_i)=F(e_i)$ for $i=1,\ldots,n$. Then $P$ is a $(1+\varepsilon)$-quasiregular $\vol_{(\R^n)^k}^\times$-curve and
\[
\sup_{x\in \Delta} |P(0)-P(x)| \le 12t_0 \sup_{x\in B^n(1/t_0)} |F(0)-F(x)|,
\]
where $\Delta=[0,e_1,\ldots,e_n]\subset \R^n$ is the standard $n$-simplex.
\end{proposition}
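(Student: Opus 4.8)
The plan is to deduce this from Lemma \ref{lem:pl-approx-model} by two affine normalizations of $F$: a post\nobreakdash-composition with a similarity sending $F(0)$ to the origin and rescaling so that a suitable supremum equals $1$, followed by a pre\nobreakdash-composition with a dilation carrying the domain ball $B^n(2/t_0)$ onto $B^n(2)$. Set $M=\sup_{x\in B^n(1/t_0)}|F(x)-F(0)|$. First I would dispose of the case $M=0$: since $t_0<1/2$ gives $1/t_0>1$, the points $e_1,\dots,e_n$ lie in $B^n(1/t_0)$, so $F$ agrees with the constant $F(0)$ on $\{0,e_1,\dots,e_n\}$, hence $P\equiv F(0)$ is constant, which is trivially a $(1+\varepsilon)$-quasiregular $\vol_{(\R^n)^k}^\times$-curve and makes the asserted inequality read $0\le 0$. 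So in what follows assume $M>0$.

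Then I would introduce the normalized curve
\[
G\colon B^n(2)\to(\R^n)^k,\qquad G(z)=\frac{1}{M}\Bigl(F\bigl(\tfrac{z}{t_0}\bigr)-F(0)\Bigr),
\]
and note, by the invariance of the classes $\ccF_t^{n,k}$ under translations and dilations recorded in Section \ref{sec:Iwaniec}, that $G$ is again a $(1+\delta)$-quasiregular $\vol_{(\R^n)^k}^\times$-curve. By construction $G(0)=0$ and $\sup_{B^n}|G|=M^{-1}\sup_{B^n(1/t_0)}|F-F(0)|=1$, so $G$ satisfies the hypotheses of Lemma \ref{lem:pl-approx-model}. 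That lemma then yields the linear map $Q\colon\R^n\to(\R^n)^k$ with $Q(e_i)=\tfrac{1}{t_0}G(t_0e_i)$ for $i=1,\dots,n$, which is a $(1+\varepsilon)$-quasiregular $\vol_{(\R^n)^k}^\times$-curve satisfying $\norm{Q}\le 12$.

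The next step is to identify $P$ in terms of $Q$. Since $G(t_0e_i)=\tfrac1M(F(e_i)-F(0))=\tfrac1M(P(e_i)-P(0))$, we obtain $Q(e_i)=\tfrac{1}{t_0M}(P(e_i)-P(0))$ for $i=1,\dots,n$; as $P$ is affine and $Q$ is linear with $Q(0)=0$, this forces $P(x)=P(0)+t_0M\,Q(x)$ for every $x\in\R^n$. Consequently $DP\equiv t_0M\,Q$, and since both $L\mapsto\norm{L}^n$ and $L\mapsto\star L^*\vol_{(\R^n)^k}^\times$ are homogeneous of degree $n$ in the linear map $L$, the distortion inequality for $Q$ transfers verbatim to $DP$; thus $P$ is a $(1+\varepsilon)$-quasiregular $\vol_{(\R^n)^k}^\times$-curve.

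Finally, every $x\in\Delta=[0,e_1,\dots,e_n]$ satisfies $|x|\le 1$, hence $|Q(x)|\le\norm{Q}\,|x|\le 12$, so
\[
\sup_{x\in\Delta}|P(0)-P(x)|=t_0M\sup_{x\in\Delta}|Q(x)|\le 12\,t_0M=12\,t_0\sup_{x\in B^n(1/t_0)}|F(0)-F(x)|,
\]
which is the claim. I do not expect a genuine obstacle here; the only point requiring care is the bookkeeping for the three changes of variable (translate the target, scale the target, scale the domain) and the routine verification that each of them preserves the $(1+\delta)$-quasiregular $\vol_{(\R^n)^k}^\times$-curve condition with the same distortion constant, which is precisely the translation--dilation invariance of the sheaves $\ccF_t^{n,k}$.
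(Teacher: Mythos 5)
Your proposal is correct and follows the same approach as the paper: both normalize $F$ to a curve $G$ on $B^n(2)$ with $G(0)=0$ and $\sup_{B^n}|G|=1$, apply Lemma \ref{lem:pl-approx-model} to obtain a linear map $Q$ with $\norm{Q}\le 12$, and then recover $P$ from $Q$ via the affine identity $P=P(0)+t_0MQ$. The explicit treatment of the degenerate case $M=0$ is a minor addition that the paper disposes of with ``we may assume $m>0$''.
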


\begin{proof}
We may assume that
\[
m = \sup_{x\in B^n(1/t_0)} |F(0)-F(x)| > 0.
\]
Let $\Psi \colon B^n(2)\to (\R^n)^k$ be the map $x\mapsto \frac{1}{m}(F(x/t_0)-F(0))$. Then, the map $\Psi$ is a $(1+\delta)$-quasiregular $\vol_{(\R^n)^k}^\times$-curve for which $\Psi(0)=0$ and $\sup_{B^n} |\Psi|=1$. Let $\tilde P \colon \R^n \to (\R^n)^k$ be the linear map defined by $\tilde P(e_i)=\frac{1}{t_0}\Psi(t_0e_i)$ for $i=1,\ldots,n$.

Now, by Lemma \ref{lem:pl-approx-model}, $\tilde P$ is a $(1+\varepsilon)$-quasiregular $\vol_{(\R^n)^k}^\times$-curve and $\norm{\tilde P}\le 12$. Since $P = F(0) + mt_0\tilde P$, we obtain that $P$ is a $(1+\varepsilon)$-quasiregular $\vol_{(\R^n)^k}^\times$-curve and
\[
\sup_{x\in \Delta} |P(0)-P(x)| = mt_0 \sup_{x\in \Delta} |\tilde P(x)| \le mt_0 \norm{\tilde P} \sup_{x\in \Delta} |x| \le 12mt_0.
\]
\end{proof}

Having Proposition \ref{prop:pa-stand-simplex} at our disposal, we are ready to prove Theorem \ref{thm:main-approx}.

\begin{proof}[Proof of Theorem \ref{thm:main-approx}]
Let $n\ge 3$, $k\ge 1$, and $\varepsilon >0$. Let $t_0=t_0(n,\varepsilon)\in (0,1/2)$ and $\delta=\delta(n,k,\varepsilon)>0$ be as in Lemma \ref{lem:pl-approx-model}. Let $\Omega \subset \R^n$ be a domain, let $U\Subset \Omega$ be a compactly contained subdomain, and let $F\colon \Omega\to (\R^n)^k$ be a $(1+\delta)$-quasiregular $\vol_{(\R^n)^k}^\times$-curve. Let also $m\in \N$.

Let $V\Subset \Omega$ be a compactly contained subdomain satisfying $\overline U \subset V$ and let $\eta_m >0$ for which $\eta_m(1+12t_0)<\frac{1}{m}$. Since $F$ is uniformly continuous on $\overline V$, there exists $\theta_m >0$ for which $|F(x)-F(y)|<\eta_m$ for $x,y\in V$ satisfying $|x-y|<\theta_m$. Fix $j_m \in \N$ having the property that $\frac{1}{t_0} 2^{-(j_m+1)}<\theta_m$ and, for every $Q\in \mathcal Q_{j_m}(\overline U)$, we have $\diam Q< \theta_m$ and $\dist (Q,\partial V)>\frac{1}{t_0} 2^{-j_m}$. Let now $j\ge j_m$ and let $\widehat F_j^{\overline U} \colon D_j(\overline U) \to (\R^n)^k$ be the piecewise affine map associated to $F$ on $\Delta_j(\overline U)$.

Let $\Delta \in \Delta_j(\overline U)$. Since $\Delta$ is an $n$-simplex, there exists $v_0,\ldots,v_n \in \R^n$ for which $\Delta=[v_0,\ldots,v_n]$.  Let $P\colon \R^n \to (\R^n)^k$ be the affine map defined by $P(v_i)=F(v_i)$ for $i=0,\ldots,n$. Then $P|_{\Delta}=(\widehat F_j^{\overline U})|_{\Delta}$ and it suffices to show that $P$ is a $(1+\varepsilon)$-quasiregular $\vol_{(\R^n)^k}^\times$-curve which is uniformly close to $F$ in $\Delta$.

Since the $n$-simplex $[v_0,\ldots,v_n]$ is obtained from the barycentric subdivision of some cube $Q\in \mathcal Q_j(\overline U)$, there exists an index $i_0 \in \{0,\ldots,n\}$ and a permutation $\sigma \colon (\{0,\ldots,n\} \setminus \{i_0\}) \to \{1,\ldots,n\}$ with the property that $v_i = v_{i_0} \pm 2^{-(j+1)}e_{\sigma(i)}$ for $i\ne i_0$. Let $B=B^n(v_{i_0},2^{-(j+1)}/t_0)\subset \R^n$ be a ball. Since $v_{i_0}\in Q$ and $\dist (Q,\partial V)>\frac{1}{t_0} 2^{-j}$, we have that $2B\subset V$.

Let $A\colon \R^n \to \R^n$ be the affine map defined by $A(0)=v_{i_0}$ and $A(e_{\sigma(i)})=v_i$ for $i\ne i_0$. Then applying Proposition \ref{prop:pa-stand-simplex} to the map $F\circ A$ yields that $P$ is a $(1+\varepsilon)$-quasiregular $\vol_{(\R^n)^k}^\times$-curve for which
\[
\sup_{x\in \Delta} |P(v_{i_0})-P(x)|\le 12t_0 \sup_{x\in B} |F(v_{i_0})-F(x)|.
\]

Now, since $\Delta \cup B \subset V$ and $|x-v_{i_0}|\le \max(\diam Q,2^{-(j+1)}/t_0)<\theta_m$ for $x\in \Delta \cup B$, we have that
\begin{align*}
\sup_{x\in \Delta} |F(x)-P(x)| &\le \sup_{x\in \Delta} |F(x)-F(v_{i_0})| + \sup_{x\in \Delta} |P(v_{i_0})-P(x)| \\
&\le \sup_{x\in \Delta} |F(x)-F(v_{i_0})| + 12t_0 \sup_{x\in B} |F(v_{i_0})-F(x)| \\
&\le \eta_m(1+12t_0) < \frac{1}{m}.
\end{align*}
This concludes the proof.
\end{proof}


\section{Quasiregular $\vol_{(\R^n)^n}^\times$-curves of small distortion have a quasiregular dominating coordinate map}

In this section, we prove the following theorem, which is the Euclidean and quantitative counterpart of Theorem \ref{thm:stability-main}.

\begin{theorem}
\label{thm:Euclidean-stability-qr-curves}
Let $n\ge 3$, $k\ge 1$ and let $\varepsilon >0$. There exists $\delta=\delta(n,k,\varepsilon)>0$ for the following. Let $\Omega \subset \R^n$ be a domain and let $F=(f_1,\ldots,f_k) \colon \Omega \to (\R^n)^k$ be a non-constant $(1+\delta)$-quasiregular $\vol_{(\R^n)^k}^\times$-curve. Then there exists a unique index $i_0 \in \{1,\ldots,n\}$ for which the coordinate map $f_{i_0} \colon \Omega \to \R^n$ is non-constant and $(1+7k\sqrt{\varepsilon})$-quasiregular. Moreover, for almost every $x\in \Omega$, we have that
\[
\norm{DF(x)} \le (1+\varepsilon)(1+7k\sqrt{\varepsilon})^\frac{1}{n} \norm{Df_{i_0}(x)}
\]
and
\[
\norm{Df_i(x)} \le 5\sqrt{k}\varepsilon^\frac{1}{4}(1+\varepsilon)^\frac{1}{n} \norm{DF(x)}
\]
for $i\ne i_0$.
\end{theorem}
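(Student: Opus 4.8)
The plan is to reduce, via the piecewise linear approximation of Theorem~\ref{thm:main-approx}, the analysis of the curve $F$ to the purely linear-algebraic dichotomy of Proposition~\ref{prop:linear-1+epsilon}, and then to show that the \emph{dominating index} supplied by that proposition is forced to be globally constant. First I would observe that it suffices to prove the statement on each compactly contained subdomain $U\Subset\Omega$: the index will be characterised almost everywhere in terms of $DF$, so the indices obtained on overlapping subdomains must coincide and assemble into a single index $i_0$ on the connected domain $\Omega$. Fixing a threshold $\varepsilon_1=\varepsilon_1(n,k)\in(0,\varepsilon]$ (to be taken small below) and letting $\delta=\delta(n,k,\varepsilon_1)$ be the constant of Theorem~\ref{thm:main-approx}, a non-constant $(1+\delta)$-quasiregular $\vol_{(\R^n)^k}^\times$-curve $F$ gives, for $m\to\infty$, piecewise affine $(1+\varepsilon_1)$-quasiregular $\vol_{(\R^n)^k}^\times$-curves $\widehat F_m\to F$ uniformly on $U$. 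On each $n$-simplex $\Delta$ of the underlying triangulation the differential of $\widehat F_m$ is a single linear map $L^\Delta=(L_1^\Delta,\dots,L_k^\Delta)$ satisfying the comass inequality, so Proposition~\ref{prop:linear-1+epsilon} attaches to $\Delta$ an index $i(\Delta)$ with $L_{i(\Delta)}^\Delta$ nearly conformal, $\norm{L_{i(\Delta)}^\Delta}$ comparable to $\norm{L^\Delta}$, and $\norm{L_i^\Delta}\le 5\sqrt k\,\varepsilon_1^{1/4}\norm{L^\Delta}$ for $i\neq i(\Delta)$.

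The heart of the argument — and the step I expect to be hardest — is to show that $i(\Delta)$ is one and the same index for all simplices of the triangulation of $U$; this is where the finiteness of the set of maximal planes of $\vol_{(\R^n)^k}^\times$ enters. For two simplices $\Delta,\Delta'$ meeting along an $(n-1)$-face $\tau$, the affine maps $\widehat F_m|_\Delta$ and $\widehat F_m|_{\Delta'}$ agree on $\tau$, hence their differentials agree on the hyperplane $V_\tau$ parallel to $\tau$. If $i(\Delta)\neq i(\Delta')$, then the coordinate $i:=i(\Delta')$ is negligible on $\Delta$, so $L_i^{\Delta}|_{V_\tau}=L_i^{\Delta'}|_{V_\tau}$ has norm at most $5\sqrt k\,\varepsilon_1^{1/4}\norm{L^\Delta}$; but $L_i^{\Delta'}$ is nearly conformal, hence uniformly non-degenerate on every hyperplane (its least singular value is at least $\norm{L_i^{\Delta'}}/(1+7k\sqrt{\varepsilon_1})$), so $\norm{L_i^{\Delta'}|_{V_\tau}}$ is comparable to $\norm{L^{\Delta'}}$, which in turn is comparable to $\norm{L^\Delta}$ (again using that $L_{i(\Delta)}^\Delta$ is non-degenerate on $V_\tau$). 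Chaining these comparisons yields an inequality of the form $1\le C(k)\,\varepsilon_1^{1/4}$, impossible once $\varepsilon_1$ is small. A similar rank computation shows a degenerate simplex ($L^\Delta=0$) cannot be adjacent to a non-degenerate one: $\det L_{i(\Delta)}^\Delta$ would vanish, forcing $L^\Delta=0$. Since $\widehat F_m$ is non-constant for large $m$ and the $(n-1)$-face adjacency graph of the triangulation is connected, every simplex is non-degenerate and carries the same index $i_0^{(m)}$; passing to a subsequence we may take $i_0^{(m)}=i_0$ independent of $m$.

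Finally I would pass to the limit. By the previous step $(\widehat F_m)_{i_0}\colon U\to\R^n$ is piecewise affine with each piece satisfying $\norm{D(\widehat F_m)_{i_0}}^n\le(1+7k\sqrt{\varepsilon_1})\det D(\widehat F_m)_{i_0}$, hence is a $(1+7k\sqrt{\varepsilon_1})$-quasiregular map; since it converges to $f_{i_0}$ uniformly and uniform limits of $K$-quasiregular maps are $K$-quasiregular, $f_{i_0}$ is $(1+7k\sqrt{\varepsilon})$-quasiregular. It is non-constant whenever $F$ is: if $f_{i_0}$ were constant, the Caccioppoli estimate of Lemma~\ref{lem:bounded-energy} applied to the quasiregular maps $(\widehat F_m)_{i_0}$ would force their $n$-energies to zero locally, and the per-simplex domination $\norm{D\widehat F_m}\le(1+\varepsilon_1)\norm{D(\widehat F_m)_{i_0}}$ would then force $\widehat F_m$, and so $F$, to be constant. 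To obtain the pointwise bounds and the uniqueness of $i_0$, I would upgrade the uniform convergence $\widehat F_m\to F$ of the bounded family of quasiregular $\vol_{(\R^n)^k}^\times$-curves to convergence of differentials in $L^n_\loc$, and hence almost everywhere along a subsequence — this is the normal-family compactness behind Theorem~\ref{thm:normal} together with the quasiminimality of quasiregular curves — and then pass the per-simplex estimates (all carrying the single index $i_0$) to the limit; this yields $\norm{Df_{i_0}(x)}^n\le(1+7k\sqrt{\varepsilon})\det Df_{i_0}(x)$, $\norm{DF(x)}\le(1+\varepsilon)\norm{Df_{i_0}(x)}$ and $\norm{Df_i(x)}\le 5\sqrt k\,\varepsilon^{1/4}\norm{DF(x)}$ for $i\neq i_0$ at almost every $x$, which is (a fortiori) the asserted pair of inequalities. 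Uniqueness follows since a second non-constant coordinate $f_{i_1}$ would have to satisfy both $\norm{DF}\le(1+\varepsilon)\norm{Df_{i_1}}$ and $\norm{Df_{i_1}}\le 5\sqrt k\,\varepsilon^{1/4}\norm{DF}$ almost everywhere, forcing $F$ constant; letting $U$ exhaust $\Omega$ and invoking uniqueness on overlaps then produces the global index. The main obstacle, as noted, is the index-propagation step of the middle paragraph, with the derivative-convergence upgrade in the limiting step a secondary technical point.
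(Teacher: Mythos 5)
Your outline follows the paper's strategy very closely: approximate by piecewise affine quasiregular curves via Theorem~\ref{thm:main-approx}, propagate the dominating index from Proposition~\ref{prop:linear-1+epsilon} across shared faces (this is exactly Proposition~\ref{prop:pl-unique-coordinate} in the paper), and then pass to the limit. One minor variation: the paper ensures each affine piece is non-constant by first invoking the local embedding theorem (Theorem~\ref{thm:local-injectivity}) and choosing the mesh so small that $F$ is injective on a scale larger than a cube; you instead argue that a constant piece would propagate through the face-adjacency graph and force $\widehat F_m$ to be globally constant. Both work, though the sentence ``$\det L_{i(\Delta)}^\Delta$ would vanish, forcing $L^\Delta=0$'' is circular as written — what you mean is that a non-degenerate neighbour $\Delta'$ would be forced to have $L_{i(\Delta')}^{\Delta'}|_{V_\tau}=0$, contradicting that $L_{i(\Delta')}^{\Delta'}$ is nearly conformal with positive least singular value.

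The genuine gap is in the final step. You propose to ``upgrade the uniform convergence $\widehat F_m\to F$ to convergence of differentials in $L^n_\loc$, and hence almost everywhere along a subsequence,'' attributing this to the normal-family compactness of Theorem~\ref{thm:normal} and quasiminimality. But Theorem~\ref{thm:normal} only yields locally uniform convergence and the quasiregular-curve property of the limit; it does \emph{not} give strong $L^n_\loc$ convergence of derivatives. Weak convergence in $W^{1,n}_\loc$ (which does hold, by \cite[Lemma~4.2]{Pankka-AASF-2020}) does not imply a.e.~convergence of derivatives, and strong $L^n$ convergence of derivatives for quasiregular curves is not established in the paper or the cited literature. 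Crucially, the per-simplex pointwise inequalities $\norm{D\widehat f_i^m}\le c\,\norm{D\widehat F_m}$ are not a weakly closed constraint, so they cannot be passed to the weak limit naively. The paper circumvents this precisely by only using weakly lower semicontinuous quantities on the left ($\int_B\norm{Df_i}^n\le\liminf\int_B\norm{D\widehat f_i^m}^n$, \cite[Lemma~4.4]{Pankka-AASF-2020}) and converting the right-hand side from a norm integral to a Jacobian integral $\int\zeta\,(\widehat F_m)^*\vol_{(\R^n)^k}^\times$, which converges by weak continuity of Jacobians (\cite[Lemma~4.3]{Pankka-AASF-2020}); the pointwise estimates then come from Lebesgue differentiation. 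This exchange — norm integral for Jacobian integral — is what introduces the extra factor $(1+\varepsilon)^{1/n}$ in the stated constants, and it is not optional: without it (or without an independent justification of strong derivative convergence, which you do not supply) the pointwise inequalities and the uniqueness of $i_0$ do not follow.
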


First, we prove a result similar to Proposition \ref{prop:linear-1+epsilon} for piecewise affine maps.

\begin{proposition}
\label{prop:pl-unique-coordinate}
Let $n\ge 3$ and $k\ge 1$. There exists $\varepsilon=\varepsilon(n,k)>0$ for the following. Let $\Delta_1$ and $\Delta_2$ be $n$-simplicies in $\R^n$ for which the intersection $\Delta=\Delta_1 \cap \Delta_2$ is an $(n-1)$-simplex. Let $P=(P_1,\ldots,P_k) \colon (\Delta_1 \cup \Delta_2) \to (\R^n)^k$ be a piecewise affine map for which $P_{|\Delta_j}$ is non-constant for $j=1,2$ and
\[
\norm{DP}^n \le (1+\varepsilon) \left( \star P^*\vol_{(\R^n)^k}^\times \right)
\]
in $(\mathrm{int} (\Delta_1 \cup \Delta_2))\setminus \Delta$. Then there exists $i_0 \in \{1,\ldots,k\}$ for which $P_{i_0}$ is $(1+7k\sqrt{\varepsilon})$-quasiregular in $\mathrm{int} (\Delta_1 \cup \Delta_2)$. Furthermore, for $i\ne i_0$, we have $\norm{DP_{i_0}} \ge \norm{DP}/(1+\varepsilon)$ and $\norm{DP_i} \le 5\sqrt{k}\varepsilon^\frac{1}{4} \norm{DP}$ in $(\mathrm{int} (\Delta_1 \cup \Delta_2))\setminus \Delta$.
\end{proposition}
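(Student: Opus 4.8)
The plan is to reduce everything to the linear statement of Proposition~\ref{prop:linear-1+epsilon} applied on each of the two simplices separately, and then to use the continuity of $P$ across the common face $\Delta$ to force the two ``dominant'' indices to coincide.

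First I would record the linear data. On $\mathrm{int}\,\Delta_j$ the map $P$ is the restriction of an affine map whose constant differential I denote $L^{(j)}=(L^{(j)}_1,\ldots,L^{(j)}_k)\colon \R^n\to (\R^n)^k$; since $P|_{\Delta_j}$ is non-constant, $L^{(j)}\neq 0$, and by hypothesis $\norm{L^{(j)}}^n\le (1+\varepsilon)\,\star (L^{(j)})^*\vol_{(\R^n)^k}^\times$. Choosing $\varepsilon=\varepsilon(n,k)<1/(100k)$, Proposition~\ref{prop:linear-1+epsilon} yields for each $j\in\{1,2\}$ an index $i_j\in\{1,\ldots,k\}$ with $\norm{L^{(j)}_{i_j}}^n\le (1+7k\sqrt\varepsilon)\det L^{(j)}_{i_j}$, with $\norm{L^{(j)}_{i_j}}\ge \norm{L^{(j)}}/(1+\varepsilon)$, and with $\norm{L^{(j)}_i}\le 5\sqrt k\,\varepsilon^{1/4}\norm{L^{(j)}}$ for $i\neq i_j$. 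The first inequality also gives, exactly as in the proof of Proposition~\ref{prop:linear-1+epsilon}, that the smallest singular value of $L^{(j)}_{i_j}$ is at least $(1-7k\sqrt\varepsilon)\norm{L^{(j)}_{i_j}}$.

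Next I would exploit the common face. Since $\Delta=\Delta_1\cap\Delta_2$ is an $(n-1)$-simplex and $P$ is continuous and affine on either side, the two affine pieces have the same differential along $\Delta$: writing $V\subset\R^n$ for the $(n-1)$-dimensional linear subspace parallel to the affine hull of $\Delta$, one gets $L^{(1)}|_V=L^{(2)}|_V$, hence $L^{(1)}_i|_V=L^{(2)}_i|_V$ for every $i$. The heart of the argument is to show $i_1=i_2$. Suppose not, and fix a unit vector $v\in V$. On the one hand,
\[
\bigl|L^{(1)}_{i_1}(v)\bigr|\ \ge\ (1-7k\sqrt\varepsilon)\,\norm{L^{(1)}_{i_1}}\ \ge\ \frac{1-7k\sqrt\varepsilon}{1+\varepsilon}\,\norm{L^{(1)}},
\]
while, since $i_1\neq i_2$,
\[
\bigl|L^{(2)}_{i_1}(v)\bigr|\ \le\ \norm{L^{(2)}_{i_1}}\ \le\ 5\sqrt k\,\varepsilon^{1/4}\,\norm{L^{(2)}}.
\]
As $L^{(1)}_{i_1}(v)=L^{(2)}_{i_1}(v)$, these combine to $\tfrac{1-7k\sqrt\varepsilon}{1+\varepsilon}\norm{L^{(1)}}\le 5\sqrt k\,\varepsilon^{1/4}\norm{L^{(2)}}$; interchanging the roles of $\Delta_1,\Delta_2$ (and of $i_1,i_2$) gives $\tfrac{1-7k\sqrt\varepsilon}{1+\varepsilon}\norm{L^{(2)}}\le 5\sqrt k\,\varepsilon^{1/4}\norm{L^{(1)}}$. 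Multiplying the two inequalities and cancelling the nonzero factor $\norm{L^{(1)}}\,\norm{L^{(2)}}$ yields $\bigl((1-7k\sqrt\varepsilon)/(1+\varepsilon)\bigr)^2\le 25k\sqrt\varepsilon$, which fails once $\varepsilon=\varepsilon(n,k)$ is small (the left side tends to $1$ and the right to $0$ as $\varepsilon\to0$). Hence $i_1=i_2=:i_0$.

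Finally, with the dominant index the same on both simplices, $P_{i_0}$ is a continuous piecewise affine — hence locally Lipschitz and $W^{1,n}_\loc$ — map on $\mathrm{int}(\Delta_1\cup\Delta_2)$ whose differential satisfies $\norm{DP_{i_0}}^n\le (1+7k\sqrt\varepsilon)\det DP_{i_0}$ at almost every point, with $\det DP_{i_0}\ge \norm{DP_{i_0}}^n/(1+7k\sqrt\varepsilon)>0$ on each piece; thus $P_{i_0}$ is $(1+7k\sqrt\varepsilon)$-quasiregular there. The remaining estimates $\norm{DP_{i_0}}\ge\norm{DP}/(1+\varepsilon)$ and $\norm{DP_i}\le 5\sqrt k\,\varepsilon^{1/4}\norm{DP}$ for $i\neq i_0$ hold on $\mathrm{int}\,\Delta_1$ and on $\mathrm{int}\,\Delta_2$ by the two properties of Proposition~\ref{prop:linear-1+epsilon}, hence on $(\mathrm{int}(\Delta_1\cup\Delta_2))\setminus\Delta$. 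The main obstacle is the matching step: one must make the comparison genuinely symmetric in the two simplices, which is precisely what the estimate $\norm{L^{(1)}}\lesssim \varepsilon^{1/4}\norm{L^{(2)}}$ together with its mirror image delivers, the non-constancy hypothesis on each $\Delta_j$ being used both to invoke Proposition~\ref{prop:linear-1+epsilon} and to perform the cancellation.
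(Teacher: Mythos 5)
Your proof is correct and follows essentially the same route as the paper's: apply Proposition~\ref{prop:linear-1+epsilon} on each simplex, use the fact that the dominant component $L^{(j)}_{i_j}$ has smallest singular value bounded below by roughly $\norm{L^{(j)}_{i_j}}$ (the same singular-value estimate the paper extracts via $\det L_{i_L}\le\sigma_1\sigma_n^{n-1}$), equate the restrictions to the common $(n-1)$-plane $V$, and derive a symmetric contradiction when the two dominant indices differ. The only cosmetic difference is that you multiply the two scalar inequalities and contradict $\bigl((1-7k\sqrt\varepsilon)/(1+\varepsilon)\bigr)^2\le 25k\sqrt\varepsilon$, while the paper fixes $\varepsilon$ small enough to write the clean chain $\norm{L}\le\tfrac34\norm{R}$ and $\norm{R}\le\tfrac34\norm{L}$.
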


\begin{proof}
Let $\varepsilon \in (0,1/(100k))$ satisfy $(1+\varepsilon)(1+7k\sqrt{\varepsilon})<3/2$ and $5\sqrt{k}\varepsilon^\frac{1}{4}<1/2$.

Let $\Delta_1$ and $\Delta_2$ be $n$-simplicies in $\R^n$ for which the intersection $\Delta=\Delta_1 \cap \Delta_2$ is an $(n-1)$-simplex and let $P=(P_1,\ldots,P_k) \colon (\Delta_1 \cup \Delta_2) \to (\R^n)^k$ be a piecewise affine map for which $P_{|\Delta_j}$ is non-constant for $j=1,2$ and
\[
\norm{DP}^n \le (1+\varepsilon) \left( \star P^*\vol_{(\R^n)^k}^\times \right)
\]
in $(\mathrm{int} (\Delta_1 \cup \Delta_2))\setminus \Delta$. By translation, we may assume that $0\in \Delta$ and that $P(0)=0$.

Since $P$ is linear and non-constant in $\Delta_1$, there exists a non-trivial linear map $L=(L_1,\ldots,L_k) \colon \R^n \to (\R^n)^k$ for which $P(x)=L(x)$ for $x\in \Delta_1$. Similarly, there exists a non-trivial linear map $R=(R_1,\ldots,R_k) \colon \R^n \to (\R^n)^k$ satisfying $P(x)=R(x)$ for $x\in \Delta_2$. Since $\varepsilon < 1/(100k)$, we obtain indices $i_L,i_R \in \{1,\ldots,k\}$ as in in Proposition \ref{prop:linear-1+epsilon} for $L$ and $R$, respectively. It suffices to show that $i_L=i_R$.

Suppose towards contradiction that $i_L \ne i_R$. Let $V\subset \R^n$ be the linear subspace spanned by $\Delta$ and let $\sigma_1 \le \cdots \le \sigma_n$ be the singular values of $L_{i_L}$. Then, $\norm{L_{i_L}}=\sigma_n$ and $\norm{(L_{i_L})|_{V}}\ge \sigma_1$. It follows that
\begin{align*}
\norm{L_{i_L}}^n &\le (1+7k\sqrt{\varepsilon}) \det L_{i_L} \le (1+7k\sqrt{\varepsilon}) \prod_{j=1}^n \sigma_j \le (1+7k\sqrt{\varepsilon}) \sigma_1 \sigma_n^{n-1} \\
&\le (1+7k\sqrt{\varepsilon}) \norm{(L_{i_L})|_{V}} \norm{L_{i_L}}^{n-1}.
\end{align*}
Thus,
\[
\norm{L} \le (1+\varepsilon)\norm{L_{i_L}} \le (1+\varepsilon)(1+7k\sqrt{\varepsilon}) \norm{(L_{i_L})|_{V}} \le \frac{3}{2} \norm{(L_{i_L})|_{V}}.
\]
On the other hand, since $i_L \ne i_R$, we have
\[
\norm{(R_{i_L})|_{V}} \le \norm{R_{i_L}} \le 5\sqrt{k}\varepsilon^\frac{1}{4} \norm{R} \le \frac{1}{2} \norm{R}.
\]

Since $L|_{\Delta}=R|_{\Delta}$ and both maps $L$ and $R$ are linear, we have that $L|_{V}=R|_{V}$. Then, combining the obtained estimates yields
\[
\norm{L} \le \frac{3}{2} \norm{(L_{i_L})|_{V}} = \frac{3}{2} \norm{(R_{i_L})|_{V}} \le \frac{3}{4} \norm{R}.
\]
Since $\norm{R}>0$, this implies that $\norm{L}<\norm{R}$. Similarly, we obtain that $\norm{R}<\norm{L}$. This contradiction yields the claim.
\end{proof}

Now we are ready to prove Theorem \ref{thm:Euclidean-stability-qr-curves}.

\begin{proof}[Proof of Theorem \ref{thm:Euclidean-stability-qr-curves}]
Let $c_1=(1+\varepsilon)(1+7k\sqrt{\varepsilon})^\frac{1}{n}$ and $c_2=5\sqrt{k}\varepsilon^\frac{1}{4}(1+\varepsilon)^\frac{1}{n}$. By passing to a smaller $\varepsilon$ if necessary, we may assume that Proposition \ref{prop:pl-unique-coordinate} holds for $\varepsilon$ and that $c_1 c_2 <1$. Let $\delta_1=\delta_1(n,k,\varepsilon)>0$ be as in Theorem \ref{thm:4}. By Theorem \ref{thm:local-injectivity}, we may fix $\delta_2=\delta_2(n,k)>0$ for which every non-constant $(1+\delta_2)$-quasiregular $\vol_{(\R^n)^k}^\times$-curve is a local embedding.

Let now $0<\delta \le \min(\delta_1,\delta_2)$, let $\Omega \subset \R^n$ be domain, and let $F=(f_1,\ldots,f_k) \colon \Omega \to (\R^n)^k$ be a non-constant $(1+\delta)$-quasiregular $\vol_{(\R^n)^k}^\times$-curve. Since $c_1 c_2 <1$, the uniqueness of a suitable index follows immediately, so it suffices to show existence.

Let $U\Subset \Omega$, $V\Subset \Omega$, and $W\Subset \Omega$ be compactly contained subsets satisfying $\overline U\subset V$ and $\overline V\subset W$. Since $F$ is a local embedding and $\overline W\subset \Omega$ is compact, there exists $R>0$ for which $F(x)\ne F(y)$ for $x,y\in W$ satisfying $|x-y|<R$. Fix $j_0 \in \N$ for which $\diam Q< R$ and $Q\subset W$ for every $Q\in \mathcal Q_{j_0}(\overline V)$.

By Theorem \ref{thm:main-approx}, there exists a sequence $(\widehat F_m)$ of piecewise affine $(1+\varepsilon)$-quasiregular $\vol_{(\R^n)^k}^\times$-curves $\widehat F_m=(\widehat f_1^m,\ldots,\widehat f_k^m) \colon V\to (\R^n)^k$ converging uniformly to $F$ in $V$, where, for each $m$, we have $\widehat F_m=(\widehat F_{j_m}^{\overline V})|_{V}$ for some $j_m \ge j_0$.

The restriction $(\widehat F_{j_m}^{\overline V})|_{\Delta}$ is non-constant for each $m$ and for $\Delta \in \Delta_{j_m}(\overline V)$. Thus, by Proposition \ref{prop:pl-unique-coordinate}, for each $m$, there exists an index $i_m\in \{1,\ldots,k\}$ for which the coordinate map $\widehat f_{i_m}^m \colon V \to \R^n$ is $(1+7k\sqrt{\varepsilon})$-quasiregular. Furthermore, for each $m$ and for $i\ne i_m$, we have $\norm{D\widehat f_{i_m}^m} \ge \norm{D\widehat F_m}/(1+\varepsilon)$ and $\norm{D\widehat f_i^m} \le 5\sqrt{k}\varepsilon^\frac{1}{4} \norm{D\widehat F_m}$ almost everywhere in $V$.

By passing to a subsequence if necessary, we may assume that there exists $i_0 \in \{1,\ldots,k\}$ satisfying $i_0=i_m$ for every $m\in \N$. The sequence $(\widehat f_{i_0}^m)$ converges uniformly to $f_{i_0}$ in $V$. Since each $\widehat f_{i_0}^m$ is $(1+7k\sqrt{\varepsilon})$-quasiregular, we obtain that $f_{i_0}$ is $(1+7k\sqrt{\varepsilon})$-quasiregular in $V$; see e.g. \cite[Theorem VI.8.6]{Rickman-book}.

By \cite[Lemma 4.2]{Pankka-AASF-2020} and by passing to a subsequence if necessary, we may assume that $(\widehat F_m)$ converges weakly to $F$ in $W_{\loc}^{1,n}(U,(\R^n)^k)$. This also implies that $(\widehat f_i^m)$ converges weakly to $f_i$ in $W_{\loc}^{1,n}(U,\R^n)$ for $i=1,\ldots,k$.

Let now $x\in U$ be a Lebesgue point of $\norm{DF}^n$ and each $\norm{Df_i}^n$. Let also $B=B^n(x,r)\Subset U$ be a compactly contained ball and fix $i\ne i_0$. Then
\[
\int_B \norm{DF}^n \le \liminf_{m\to \infty} \int_B \norm{D\widehat F_m}^n
\]
and
\[
\int_B \norm{Df_i}^n \le \liminf_{m\to \infty} \int_B \norm{D\widehat f_i^m}^n
\]
by \cite[Lemma 4.4]{Pankka-AASF-2020}. We also have that
\[
\int_B \norm{D\widehat F_m}^n \le (1+\varepsilon)^n \int_B \norm{D\widehat f_{i_0}^m}^n \le (1+\varepsilon)^n (1+7k\sqrt{\varepsilon}) \int_B (\widehat f_{i_0}^m)^* \vol_{\R^n}
\]
and
\[
\int_B \norm{D\widehat f_i^m}^n \le (5\sqrt{k}\varepsilon^\frac{1}{4})^n \int_B \norm{D\widehat F_m}^n \le (5\sqrt{k}\varepsilon^\frac{1}{4})^n (1+\varepsilon) \int_B (\widehat F_m)^* \vol_{(\R^n)^k}^\times
\]
for each $m$.

Let $\zeta \in C_0^\infty(U)$ be a non-negative function satisfying $\zeta \equiv 1$ on $B$. Then
\[
\liminf_{m\to \infty} \int_B (\widehat f_{i_0}^m)^* \vol_{\R^n} \le \liminf_{m\to \infty} \int_U \zeta (\widehat f_{i_0}^m)^* \vol_{\R^n} = \int_U \zeta f_{i_0}^* \vol_{\R^n}
\]
and
\[
\liminf_{m\to \infty} \int_B (\widehat F_m)^* \vol_{(\R^n)^k}^\times \le \liminf_{m\to \infty} \int_U \zeta (\widehat F_m)^* \vol_{(\R^n)^k}^\times = \int_U \zeta F^* \vol_{(\R^n)^k}^\times
\]
by \cite[Lemma 4.3]{Pankka-AASF-2020}. Now combining previous estimates yields
\[
\int_B \norm{DF}^n \le c_1^n \int_U \zeta f_{i_0}^* \vol_{\R^n} \le c_1^n \int_U \zeta \norm{Df_{i_0}}^n
\]
and
\[
\int_B \norm{Df_i}^n \le c_2^n \int_U \zeta F^* \vol_{(\R^n)^k}^\times \le c_2^n \int_U \zeta \norm{DF}^n.
\]

Since $\zeta$ is arbitrary, we obtain that
\[
\int_B \norm{DF}^n \le c_1^n \int_B \norm{Df_{i_0}}^n
\]
and
\[
\int_B \norm{Df_i}^n \le c_2^n \int_B \norm{DF}^n.
\]
Then, since $x$ is a Lebesgue point of $\norm{DF}^n$, $\norm{Df_i}^n$, and $\norm{Df_{i_0}}^n$, letting $r$ tend to zero yields that
\[
\norm{DF(x)}^n \le c_1^n \norm{Df_{i_0}(x)}^n
\]
and
\[
\norm{Df_i(x)}^n \le c_2^n \norm{DF(x)}^n.
\]

By Lebesgue's differentiation theorem, we obtain that $\norm{DF} \le c_1 \norm{Df_{i_0}}$ and $\norm{Df_i} \le c_2 \norm{DF}$ almost everywhere in $U$. Further, since $F$ is a local embedding, we obtain that $f_{i_0}$ is non-constant. The claim follows by exhaustion.
\end{proof}


\section{Proofs of main theorems}
\label{sec:last}

Theorems \ref{thm:local-embedding} and \ref{thm:stability-main} follow from corresponding Euclidean results using charts.

Recall that, given Riemannian manifolds $M$ and $N=N_1 \times \dots \times N_k$, a continuous map $F=(f_1,\ldots,f_k) \colon M\to N$, a point $x\in M$ and a parameter $a\in \N$, the exponential maps at $x$ and $F(x)$ yield charts $(U,\varphi)$ and $(V,\psi)$ on $M$ and $N$, respectively, for which
\begin{enumerate}
\item the set $U$ is a neighborhood of the point $x$,
\item the image $F(U)$ is contained in $V$,
\item the chart $(V,\psi)$ is a product chart, i.e., $V=V_1\times \dots \times V_k$ and $\psi=(\psi_1,\ldots,\psi_k)$, where each $(V_i,\psi_i)$ is a chart on $N_i$,
\item the map $\varphi \colon U\to \R^n$ is $(1+1/a)$-bilipschitz,
\item the map $\psi \colon V\to (\R^n)^k$ is $(1+1/a)$-bilipschitz,
\item the Jacobian determinant of each map $\psi_i$ satisfies $J_{\psi_i}(f_i(x))=1$, and
\item there exists $r_0>0$ for which $\varphi (B_{d_M}(x,r)) = B^n(\varphi(x),r)$ for $0<r<r_0$.
\end{enumerate}
The bilipschitz constant for $\varphi$ and $\psi$ can be made arbitrarily close to one when $U$ and $V$ are chosen to be sufficiently small.
Further we have that
\[
((\psi^{-1})^* \vol_N^\times)(\psi(F(x))=\vol_{(\R^n)^k}^\times.
\]

The following theorem is a quantitative version of Theorem \ref{thm:stability-main}.

\begin{theorem}
\label{thm:stability-qr-curves}
Let $n\ge 3$, $k\ge 1$, and $\varepsilon >0$. Then there exists $\delta_0=\delta_0(n,k,\varepsilon)>0$ for the following. Let $M$ be an oriented and connected Riemannian $n$-manifold and let $N=N_1\times \cdots \times N_k$ be a Riemannian product of oriented Riemannian $n$-manifolds. Then, for $0<\delta<\delta_0$ and a non-constant $(1+\delta)$-quasiregular $\vol_N^\times$-curve $F=(f_1,\ldots, f_k) \colon M\to N$, there exists a unique index $i_0\in \{1,\ldots, k\}$ for which the coordinate map $f_{i_0} \colon M \to N_{i_0}$ is a $(1+8k\sqrt{\varepsilon})$-quasiregular local homeomorphism. Almost everywhere in $M$, we have that
\[
\norm{DF} \le (1+\varepsilon)(1+8k\sqrt{\varepsilon})^\frac{1}{n} \norm{Df_{i_0}}
\]
and
\[
\norm{Df_i} \le 6\sqrt{k}\varepsilon^\frac{1}{4}(1+\varepsilon)^\frac{1}{n} \norm{DF}
\]
for $i\ne i_0$.
\end{theorem}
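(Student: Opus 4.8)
The plan is to reduce Theorem~\ref{thm:stability-qr-curves} to its Euclidean counterpart Theorem~\ref{thm:Euclidean-stability-qr-curves} by passing to the bilipschitz product charts recorded above, and then to make the dominating index global using that $M$ is connected. Fix $\varepsilon>0$ and introduce an auxiliary parameter $\varepsilon'=\min\{\varepsilon,\varepsilon_1\}$, where $\varepsilon_1=\varepsilon_1(n,k)\in(0,1/(100k))$ is chosen so small that the constants $c_1=(1+\varepsilon')(1+7k\sqrt{\varepsilon'})^{1/n}$ and $c_2=5\sqrt{k}\,(\varepsilon')^{1/4}(1+\varepsilon')^{1/n}$ of Theorem~\ref{thm:Euclidean-stability-qr-curves} satisfy $c_1c_2<1$, and so small that every $(1+7k\sqrt{\varepsilon'})$-quasiregular map between oriented Riemannian $n$-manifolds is locally injective by the Martio--Rickman--V\"ais\"al\"a theorem. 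Let $\delta_1=\delta_1(n,k,\varepsilon')>0$ be the constant produced by Theorem~\ref{thm:Euclidean-stability-qr-curves} for the parameter $\varepsilon'$, shrunk if necessary so that Theorem~\ref{thm:local-embedding} applies for $\delta<\delta_1$, i.e.\ non-constant $(1+\delta)$-quasiregular $\vol_N^\times$-curves are local embeddings. Finally fix an integer $a=a(n,k,\varepsilon)$ so large that the bilipschitz losses $(1+1/a)$ are absorbed, namely $(1+1/a)^{4n}(1+7k\sqrt{\varepsilon'})\le 1+8k\sqrt{\varepsilon}$, $(1+1/a)^{4}c_1\le(1+\varepsilon)(1+8k\sqrt{\varepsilon})^{1/n}$, and $(1+1/a)^{4}c_2\le 6\sqrt{k}\,\varepsilon^{1/4}(1+\varepsilon)^{1/n}$; this is possible since $\varepsilon'\le\varepsilon$, so each left side converges, as $a\to\infty$, to a quantity strictly below the corresponding right side.

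Let $\delta_0=\delta_0(n,k,\varepsilon)\in(0,\delta_1)$, to be fixed below, and let $F=(f_1,\ldots,f_k)\colon M\to N$ be a non-constant $(1+\delta)$-quasiregular $\vol_N^\times$-curve with $0<\delta<\delta_0$; by Theorem~\ref{thm:local-embedding} the map $F$ is a local embedding, in particular non-constant on every open subset of $M$. Fix $x\in M$ and choose product charts $(U,\varphi)$ and $(V,\psi)$ around $x$ and $F(x)$ as above, with $\varphi,\psi$ being $(1+1/a)$-bilipschitz, $\varphi(U)$ a round ball, and $F(U)\subset V$. Since $\psi=(\psi_1,\ldots,\psi_k)$ is a product chart normalised by $J_{\psi_i}(f_i(x))=1$, the form $(\psi^{-1})^*\vol_N^\times$ equals $\vol_{(\R^n)^k}^\times$ at $\psi(F(x))$, hence, after shrinking $U$ and $V$, it is as $C^0$-close to $\vol_{(\R^n)^k}^\times$ on $\psi(V)$ as we wish. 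The quasiregular $\omega$-curve inequality is pointwise and a standard perturbation argument shows that it is stable under small $C^0$-perturbations of the calibration and under conjugation by $(1+1/a)$-bilipschitz maps, with a controlled change of the distortion constant. Therefore $\widetilde F:=\psi\circ F\circ\varphi^{-1}\colon\varphi(U)\to(\R^n)^k$ is a non-constant $(1+\delta')$-quasiregular $\vol_{(\R^n)^k}^\times$-curve with $\delta'\le\delta_1$, provided $\delta_0$ and the charts are small enough; we fix $\delta_0$ accordingly. Applying Theorem~\ref{thm:Euclidean-stability-qr-curves} to $\widetilde F$ on $\varphi(U)$ with parameter $\varepsilon'$ yields an index $i_x\in\{1,\ldots,k\}$, unique for $\widetilde F$, for which $\widetilde f_{i_x}:=\psi_{i_x}\circ f_{i_x}\circ\varphi^{-1}$ is non-constant and $(1+7k\sqrt{\varepsilon'})$-quasiregular, with $\norm{D\widetilde F}\le c_1\norm{D\widetilde f_{i_x}}$ and $\norm{D\widetilde f_i}\le c_2\norm{D\widetilde F}$ for $i\neq i_x$ almost everywhere on $\varphi(U)$. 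Conjugating back by the $(1+1/a)$-bilipschitz maps $\varphi$ and $\psi_{i_x}$ and using the choice of $a$, we obtain that $f_{i_x}$ is $(1+8k\sqrt{\varepsilon})$-quasiregular on $U$, that $\norm{DF}\le(1+\varepsilon)(1+8k\sqrt{\varepsilon})^{1/n}\norm{Df_{i_x}}$, and that $\norm{Df_i}\le 6\sqrt{k}\,\varepsilon^{1/4}(1+\varepsilon)^{1/n}\norm{DF}$ for $i\neq i_x$, almost everywhere on $U$.

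The index $i_x$ does not depend on the chosen chart: it is pinned down by the derivative inequalities, which transfer to chart-independent inequalities $\norm{DF}\le c_1'\norm{Df_{i_x}}$ and $\norm{Df_i}\le c_2'\norm{DF}$ ($i\neq i_x$) on $U$ with $c_1'c_2'<1$, again by the choice of $a$. If two such neighbourhoods $U,U'$ overlap and $i_x\neq i_{x'}$, then at a point of $U\cap U'$ where the above inequalities hold and $\norm{DF}>0$ — such a point exists since $F$ is non-constant on the open set $U\cap U'$ — we would get $\norm{DF}\le c_1'\norm{Df_{i_x}}\le c_1'c_2'\norm{DF}<\norm{DF}$, a contradiction; hence $i_x=i_{x'}$. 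Thus $x\mapsto i_x$ is locally constant, and since $M$ is connected it equals a single index $i_0$. It follows that $f_{i_0}$ is $(1+8k\sqrt{\varepsilon})$-quasiregular on all of $M$, the two derivative inequalities hold almost everywhere on $M$, and the same overlap argument gives uniqueness of $i_0$. Moreover $f_{i_0}$ is non-constant, for otherwise $DF=0$ almost everywhere by $\norm{DF}\le(1+\varepsilon)(1+8k\sqrt{\varepsilon})^{1/n}\norm{Df_{i_0}}$, contradicting that $F$ is non-constant and $M$ connected. Finally, $f_{i_0}$ is also $(1+7k\sqrt{\varepsilon'})$-quasiregular with $\varepsilon'\le\varepsilon_1$, hence locally injective by the Martio--Rickman--V\"ais\"al\"a theorem; being a non-constant quasiregular map between oriented Riemannian $n$-manifolds it is open by Reshetnyak's theorem, and a locally injective continuous open map is a local homeomorphism. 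This establishes all the assertions.

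The main obstacle is neither the chart reduction nor the index bookkeeping, which are routine once the slack between $7k$ and $8k$ (and between $5$ and $6$) is used to absorb the bilipschitz and calibration-perturbation errors, but rather the \emph{local homeomorphism} conclusion: the bound $1+8k\sqrt{\varepsilon}$ need not be small, so the local injectivity of $f_{i_0}$ cannot be read off from its distortion at face value. The device is to run the Euclidean stability theorem with the smaller parameter $\varepsilon'=\min\{\varepsilon,\varepsilon_1(n,k)\}$, so that the dominating coordinate automatically has distortion below the Martio--Rickman--V\"ais\"al\"a threshold, after which Reshetnyak's openness theorem upgrades local injectivity to the local homeomorphism property.
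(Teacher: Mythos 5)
Your proposal is correct and follows essentially the same route as the paper's proof: pass to $(1+1/a)$-bilipschitz exponential product charts, reduce to the Euclidean stability theorem (Theorem~\ref{thm:Euclidean-stability-qr-curves}) for $\psi\circ F\circ\varphi^{-1}$, absorb the bilipschitz and calibration-perturbation losses by taking $a$ large, and globalize the dominating index using connectivity and $c_1c_2<1$. The one presentational difference is how the local-homeomorphism conclusion is secured for arbitrary $\varepsilon>0$: the paper begins with the WLOG reduction ``we may assume $\varepsilon$ is small enough that $(1+8k\sqrt{\varepsilon})$-quasiregular maps are local homeomorphisms,'' relying on monotonicity in $\varepsilon$, whereas you run the Euclidean theorem with $\varepsilon'=\min\{\varepsilon,\varepsilon_1(n,k)\}$ from the start and invoke Martio--Rickman--V\"ais\"al\"a plus Reshetnyak; both are valid and lead to the same constants. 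You also spell out the locally-constant-index argument that the paper compresses into ``Since $M$ is connected, it suffices to prove the claim locally,'' and you hand-wave the chart perturbation that the paper attributes precisely to \cite[Lemma 5.2]{Pankka-AASF-2020}; neither affects correctness.
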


\begin{proof}
We may assume that $\varepsilon$ is small enough to satisfy
\[
6\sqrt{k}\varepsilon^\frac{1}{4}(1+\varepsilon)^\frac{n+1}{n}(1+8k\sqrt{\varepsilon})^\frac{1}{n}<1
\]
and that each non-constant $(1+8k\sqrt{\varepsilon})$-quasiregular map between Riemannian $n$-manifolds is a local homeomorphism. Let now $\delta_0=\delta_0(n,k,\varepsilon)>0$ be as in Theorem \ref{thm:Euclidean-stability-qr-curves} and let $0<\delta<\delta_0/4$.

Let $M$ be an oriented and connected Riemannian $n$-manifold and let $N=N_1\times \dots \times N_k$ be a Riemannian product of oriented Riemannian $n$-manifolds. Let also $F=(f_1,\ldots,f_k) \colon M\to N$ be a non-constant $(1+\delta)$-quasiregular $\vol_N^\times$-curve.

Since $M$ is connected, it suffices to prove the claim locally. Let $x\in M$ and let $a\in \N$ be an auxiliary parameter to be fixed later. Let $(U,\varphi)$ and $(V,\psi)$ be charts as in the beginning of the section.

We may define a map $G=(g_1,\ldots,g_k)=\psi \circ F\circ \varphi^{-1} \colon \varphi(U) \to \psi(V)$ and an $n$-form $\omega = (\psi^{-1})^* \vol_N^\times \in \Omega^n(\psi(V))$. The $n$-form $\omega$ is a non-vanishing closed form and we have that
\[
(\norm{\omega} \circ G) \norm{DG}^n \le (1+1/a)^{4n}(1+\delta)(\star G^*\omega)
\]
almost everywhere in $\varphi(U)$. We may assume that $(1+1/a)^{4n}(1+\delta)<1+\delta_0/2$. Thus, $G$ is a $K$-quasiregular $\omega$-curve, where $K<1+\delta_0/2$.

By \cite[Lemma 5.2]{Pankka-AASF-2020}, we obtain that the point $\varphi(x)$ has a neighborhood $\Omega \subset \varphi(U)$ for which the restriction $G_{|\Omega} \colon \Omega \to (\R^n)^k$ is a $(1+\delta_0/2)$-quasiregular $\omega_0$-curve, where $\omega_0 \in \wedge^n(\R^n)^k$ satisfies
\[
\omega_0(G(\varphi(x)) = \omega(G(\varphi(x)) = \vol_{(\R^n)^k}^\times.
\]
Thus, the restriction $G_{|\Omega}$ is a non-constant $(1+\delta_0/2)$-quasiregular $\vol_{(\R^n)^k}^\times$-curve. Then, by Theorem \ref{thm:Euclidean-stability-qr-curves}, there exists $i_0\in \{1,\ldots,k\}$ so that the coordinate map $g_{i_0}$ is non-constant and $(1+7k\sqrt{\varepsilon})$-quasiregular in $\Omega$. We also have, almost everywhere in $\Omega$, that $\norm{DG} \le (1+\varepsilon)(1+7k\sqrt{\varepsilon})^\frac{1}{n} \norm{Dg_{i_0}}$ and $\norm{Dg_i} \le 5\sqrt{k}\varepsilon^\frac{1}{4}(1+\varepsilon)^\frac{1}{n} \norm{DG}$ for $i\ne i_0$.

It follows that the coordinate map $f_{i_0}$ is $(1+1/a)^{4n}(1+7k\sqrt{\varepsilon})$-quasiregular in $\varphi^{-1}(\Omega)$. We also obtain, almost everywhere in $\varphi^{-1}(\Omega)$, that
\[
\norm{DF} \le (1+1/a)^4 (1+\varepsilon)(1+7k\sqrt{\varepsilon})^\frac{1}{n} \norm{Df_{i_0}}
\]
and
\[
\norm{Df_i} \le (1+1/a)^4 5\sqrt{k}\varepsilon^\frac{1}{4}(1+\varepsilon)^\frac{1}{n} \norm{DF}
\]
for $i\ne i_0$. The claim now follows by taking $a\in \N$ large enough. 
\end{proof}

Theorem \ref{thm:stability-main} follows from Theorem \ref{thm:stability-qr-curves} immediately. We also obtain Theorem \ref{thm:manifold-Liouville} from Theorem \ref{thm:stability-qr-curves}.

\begin{named}{Theorem \ref{thm:manifold-Liouville}}
Let $M$ be an oriented and connected Riemannian $n$-manifold for $n\ge 3$ and let $N=N_1\times \cdots \times N_k$ be a Riemannian product of oriented Riemannian $n$-manifolds $N_i$ for $i\in \{1,\ldots, k\}$. Then for each non-constant $\vol_N^\times$-calibrated curve $F=(f_1,\ldots, f_k)\colon M\to N$ there exists an index $i_0\in \{1,\ldots, k\}$ for which
\begin{enumerate}
\item the coordinate map $f_{i_0}\colon M\to N_{i_0}$ is a conformal map and
\item for $i\ne i_0$ the coordinate map $f_i \colon M\to N_i$ is constant.
\end{enumerate}
\end{named}

\begin{proof}
Let $F=(f_1,\ldots, f_k)\colon M\to N$  be a non-constant $\vol_N^\times$-calibrated curve. Since $F$ is a $(1+\delta)$-quasiregular $\vol_N^\times$-curve for every $\delta >0$, Theorem \ref{thm:stability-qr-curves} yields that, for every $\varepsilon >0$, there exists a unique index $i_\varepsilon \in \{1,\ldots,k\}$ for which the coordinate map $f_{i_\varepsilon}$ is a $(1+8k\sqrt{\varepsilon})$-quasiregular local homeomorphism. Each index $i_\varepsilon$ also satisfies the condition that $\norm{Df_i} \le 6\sqrt{k}\varepsilon^\frac{1}{4}(1+\varepsilon)^\frac{1}{n}\norm{DF}$ almost everywhere for $i\ne i_\varepsilon$.

By the uniqueness of the indices $i_\varepsilon$, there exists $i_0\in \{1,\ldots, k\}$ satisfying $i_0=i_\varepsilon$ for $\varepsilon >0$. For all $\varepsilon >0$, the coordinate map $f_{i_0}$ is a $(1+8k\sqrt{\varepsilon})$-quasiregular local homeomorphism and we have that $\norm{Df_i} \le 6\sqrt{k}\varepsilon^\frac{1}{4}(1+\varepsilon)^\frac{1}{n}\norm{DF}$ almost everywhere for $i\ne i_0$. This yields the claim.
\end{proof}

We finish by recalling the statement of Theorem \ref{thm:local-embedding} and giving its proof.

\begin{named}{Theorem \ref{thm:local-embedding}}
Let $n\ge 3$, $k\ge 1$ and $H>1$. Then there exists $\varepsilon=\varepsilon(n,k,H)>0$ having the property that each non-constant $(1+\varepsilon)$-quasiregular $\vol_N^\times$-curve $M\to N$ from an oriented and connected Riemannian $n$-manifold $M$ to a Riemannian product $N=N_1\times \cdots \times N_k$ of oriented Riemannian $n$-manifolds is a local $H$-quasiconformal embedding.
\end{named}

\begin{proof}
Let $H>H'>1$ and let $\varepsilon'=\varepsilon'(n,k,H')>0$ be as in Theorem \ref{thm:local-injectivity}. Let now $0<\varepsilon<\varepsilon'$.

Let $M$ be an oriented and connected Riemannian $n$-manifold and let $N=N_1\times \dots \times N_k$ be a Riemannian product of oriented Riemannian $n$-manifolds. Let also $F=(f_1,\ldots,f_k) \colon M\to N$ be a non-constant $(1+\varepsilon)$-quasiregular $\vol_N^\times$-curve.

The claim is local, so we may fix a point on $M$ and prove the claim in its neighborhood. Let $x\in M$ and let $a\in \N$ be an auxiliary parameter to be fixed later. Let also $(U,\varphi)$ and $(V,\psi)$ be charts as in the beginning of the section. Then the point $\varphi(x)$ has a neighborhood $\Omega \subset \varphi(U)$ for which the restriction $G_{|\Omega} \colon \Omega \to (\R^n)^k$, where $G=\psi \circ F\circ \varphi^{-1}$, is a non-constant $(1+\varepsilon')$-quasiregular $\vol_{(\R^n)^k}^\times$-curve.

By Theorem \ref{thm:local-injectivity}, the map $G_{|\Omega}$ is a local $H'$-quasiconformal embedding. This yields immediately that $F_{|\varphi^{-1}(\Omega)}$ is a local embedding. Further,
\begin{align*}
&\limsup_{r\to 0} \frac{\sup_{d_M(y,x)=r} d_N(F(y),F(x))}{\inf_{d_M(y,x)=r} d_N(F(y),F(x))} \\
&\quad \le (1+1/a)^2 \limsup_{r\to 0} \frac{\sup_{|z-\varphi(x)|=r} |G(z)-G(\varphi(x))|}{\inf_{|z-\varphi(x)|=r} |G(z)-G(\varphi(x))|} \le (1+1/a)^2 H'.
\end{align*}
Since we may assume that $(1+1/a)^2 H'\le H$, this concludes the proof.
\end{proof}




\bibliographystyle{abbrv}
\bibliography{QRC}

\end{document}